\documentclass[a4paper,11pt]{article}
\usepackage[T1]{fontenc}
\usepackage[utf8]{inputenc}
\usepackage[italian,english]{babel}
\usepackage{microtype}
\usepackage{quoting}
\usepackage{amsthm, amsmath, amssymb, amscd, mathrsfs, mathtools}

\usepackage[a4paper]{geometry}
\geometry{text={15.7 cm, 22.8 cm},centering,includefoot}
\usepackage{booktabs, caption, graphicx} 
\usepackage{enumitem}
\usepackage[framemethod=default]{mdframed}
\usepackage{braket}
\usepackage{tikz-cd}
\usepackage{centernot}
\usepackage{fancyhdr}
\usepackage{emptypage}
\usepackage{esint}
\usepackage[english]{varioref}

\usepackage[backend=biber,style=alphabetic]{biblatex}
\usepackage{csquotes}
\addbibresource{bibliography.bib}

\global\mdfdefinestyle{exampledefault}{}

\usepackage{settings} 

\title{On a conjecture of De Giorgi about the phase-field approximation of the Willmore functional}
\author{
Giovanni Bellettini\footnote{
Dipartimento di Ingegneria dell'Informazione e Scienze Matematiche,
Universit\`a di Siena, 53100 Siena, Italy,
and International Centre for Theoretical Physics ICTP,
Mathematics Section, 34151 Trieste, Italy.
E-mail: bellettini@diism.unisi.it
                      }\and
Mattia Freguglia\footnote{Scuola Normale Superiore, 56126 Pisa, Italy. E-mail: mattia.freguglia@sns.it} \and
Nicola Picenni\footnote{Scuola Normale Superiore, 56126 Pisa, Italy. E-mail: nicola.picenni@sns.it}}
\date{}

\begin{document}

\maketitle

\begin{abstract}

In 1991 De Giorgi conjectured that, given $\lambda >0$, if $\mu_\eps$ stands for the density of the Allen-Cahn energy and $v_\eps$ represents its first variation, then $\int [v_\eps^2 + \lambda] d\mu_\eps$ should $\Gamma$-converge to $c\lambda \Per(E) + k \mathcal{W}(\Sigma)$ for some real constant $k$, where $\Per(E)$ is the perimeter of the set $E$, $\Sigma=\partial E$, $\mathcal{W}(\Sigma)$ is the Willmore functional, and $c$ is an explicit positive constant. A modified version of this conjecture  was proved in space dimensions $2$ and $3$ by Röger and Schätzle, when the term $\int v_\eps^2 \, d\mu_\eps$ is replaced by $  \int  v_\eps^2 {\eps}^{-1} dx$, with a suitable $k>0$. In the present paper we show that, surprisingly, the original De Giorgi conjecture holds with $k=0$. Further properties on the limit measures obtained under a uniform control of the approximating energies are also provided.

\vspace{4ex}

\noindent{\bf Mathematics Subject Classification 2020 (MSC2020):} 
49J45, 49Q20.

\vspace{4ex}


\noindent{\bf Key words:} 
Willmore functional, Allen-Cahn energy, Gamma-convergence, integral varifolds.
\end{abstract}


\section{Introduction}

In \cite[Conjecture 4]{DeGiorgi} De Giorgi posed the following:

\begin{conjecture}
\label{conj_DG}
Let $n\geq 2$ be an integer number and let $E\subset \R^n$ be a set whose boundary $\Sigma:=\partial E$ is a hypersurface of class $C^2$. For any open set $\Omega\subset \R^n$ and any positive number $\lambda>0$ let us consider the following family of functionals, indexed by the parameter $\eps>0$,
\begin{equation}\label{def:DG_eps_cos}
\mathcal{DG}_\eps(u,\Omega):=\int_{\Omega} \left[\left(2\eps\Delta u-\frac{\sin u}{\eps} \right)^2+\lambda\right] \left[\eps|\nabla u|^2 + \frac{1-\cos u}{\eps} \right]\,dx,
\end{equation}
if $u \in W^{2,1}(\Omega)$, and $\mathcal{DG}_\eps(u):=+\infty$, if $u \in L^1(\Omega)\setminus W^{2,1}(\Omega)$.

Then there exists a constant $k \in \R$ such that
$$\Gamma(L^1(\Omega))-\lim_{\eps \to 0^+} \mathcal{DG}_\eps(2\pi \chi_E,\Omega)= c \lambda \mathcal{H}^{n-1}(\Sigma\cap \Omega) + k\int_{\Sigma\cap\Omega} H^2 d\mathcal{H}^{n-1},$$
where $\chi_E$ is the characteristic function of the set $E$ (that is equal to one inside $E$ and null outside), $c=8\sqrt{2}$, $H(y)$ is the mean curvature of $\Sigma$ at the point $y$ and $\mathcal{H}^{n-1}$ stands for the $(n-1)$-dimensional Hausdorff measure in $\mathbb{R}^n$.
\end{conjecture}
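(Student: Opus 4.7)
The plan is to establish the $\Gamma$-convergence with $k=0$ in the two customary steps.

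For the $\Gamma$-liminf, it suffices to drop the nonnegative factor $v_\eps^2$: writing $\mu_\eps := \eps|\nabla u|^2+(1-\cos u)/\eps$, one has $\mathcal{DG}_\eps(u,\Omega)\geq \lambda\int_\Omega \mu_\eps\, dx$, and the classical Modica--Mortola liminf for the sine-Gordon potential gives $\liminf_{\eps\to 0}\int_\Omega \mu_\eps\, dx \geq c\,\Per(E;\Omega) = c\,\mathcal{H}^{n-1}(\Sigma\cap\Omega)$ for any $u_\eps\to 2\pi\chi_E$ in $L^1$, using $\Sigma\in C^2$ and $c = 8\sqrt{2} = 2\int_0^{2\pi}\sqrt{1-\cos t}\,dt$.

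The heart of the argument is the construction of a recovery sequence for which $\int v_\eps^2\,d\mu_\eps \to 0$ while $\int \mu_\eps\,dx \to c\,\mathcal{H}^{n-1}(\Sigma\cap\Omega)$. The naive choice $u_\eps=q(d_\Sigma/\eps)$, with $q$ the standard 1D profile ($2q''=\sin q$, $q(-\infty)=0$, $q(+\infty)=2\pi$), yields $v_\eps = 2q'(d_\Sigma/\eps)\Delta d_\Sigma$ and a strictly positive Willmore-type contribution in the limit; a first-order correction $u_\eps=q+\eps\psi$ cannot kill this, because the linearized operator $L=2\partial_s^2-\cos q$ has kernel spanned by $q'$ and the source $-2q'H$ is not $L^2(\R)$-orthogonal to $q'$ (Fredholm obstruction). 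The strategy I propose is therefore to abandon the perturbative approach and use, as the core of the recovery sequence on the layer where $\mu_\eps$ concentrates, a bona fide \emph{critical point} of the Allen--Cahn energy, for which $v_\eps$ vanishes identically.

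Concretely, fix $C>1/\sqrt{2}$ and set $L_\eps := C\log(1/\eps)$. For $\eps$ so small that the tube $T_\eps := \{|d_\Sigma| < \eps(L_\eps+1)\}$ lies in the tubular neighbourhood where $d_\Sigma\in C^2$, let $u_\eps^*$ be the Dirichlet minimizer of $\int_{T_\eps}\mu_\eps\,dx$ with $u=0$ on $\{d_\Sigma=-\eps(L_\eps+1)\}$ and $u=2\pi$ on $\{d_\Sigma=\eps(L_\eps+1)\}$; by elliptic regularity $u_\eps^*$ is smooth on $\overline{T_\eps}$ and satisfies $2\eps^2\Delta u_\eps^* = \sin u_\eps^*$, i.e.\ $v_\eps[u_\eps^*]\equiv 0$ in $T_\eps$. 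Let $\chi_\eps\colon\R\to[0,1]$ be a smooth cut-off with $\chi_\eps(s)=1$ for $|s|\leq L_\eps$ and $\chi_\eps(s)=0$ for $|s|\geq L_\eps+1$, and set
\[
u_\eps(x) := \chi_\eps\bigl(d_\Sigma(x)/\eps\bigr)\, u_\eps^*(x) + \bigl(1-\chi_\eps(d_\Sigma(x)/\eps)\bigr)\, q(d_\Sigma(x)/\eps),
\]
with $q(d_\Sigma/\eps)$ smoothly extended by $0$ or $2\pi$ far from $\Sigma$. Then $u_\eps\in C^\infty(\Omega)$ and $u_\eps\to 2\pi\chi_E$ in $L^1(\Omega)$ because $\mathrm{vol}(T_\eps)=O(\eps\log(1/\eps))\to 0$. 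On $\{|d_\Sigma|\leq\eps L_\eps\}$ we have $u_\eps=u_\eps^*$, so $v_\eps\equiv 0$ there, while the Modica--Mortola upper bound for $u_\eps^*$ (tested against a cut-off of $q((d_\Sigma-\tau_\eps)/\eps)$ with $\tau_\eps$ tuned to match the boundary data) together with the liminf yields $\int_{T_\eps}\mu_\eps\,dx\to c\,\mathcal{H}^{n-1}(\Sigma\cap\Omega)$. In the cut-off annulus $\{\eps L_\eps<|d_\Sigma|<\eps(L_\eps+1)\}$ and the exterior $\{|d_\Sigma|\geq\eps(L_\eps+1)\}$, both $u_\eps^*$ and $q(d_\Sigma/\eps)$ are within $O(e^{-L_\eps/\sqrt{2}})=O(\eps^{C/\sqrt{2}})$ of $\{0,2\pi\}$, and a direct power count (annulus volume $O(\eps)$; exponential decay beyond) shows that the remaining contributions to both $\int\mu_\eps\,dx$ and $\int v_\eps^2\,d\mu_\eps$ are $o(1)$ whenever $C>1/\sqrt{2}$.

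The main technical obstacle I anticipate is justifying the Modica--Mortola convergence for $u_\eps^*$ when its domain $T_\eps$ itself shrinks to $\Sigma$ jointly with $\eps\to 0$: neither $\Gamma$-convergence on a fixed domain nor a naïve rescaling to a unit slab applies directly. The clean route is to sandwich $\int_{T_\eps}\mu_\eps\,dx$ between an upper bound from the explicit competitor above and the lower bound supplied by the liminf inequality of Step 1 applied to $u_\eps^*$ itself, whose $L^1$-limit is $2\pi\chi_E$ by construction of the Dirichlet data and the vanishing of $\mathrm{vol}(T_\eps)$. Once this sandwich is in place, the exponential-decay estimates in the annulus and exterior are routine, and the construction delivers $\limsup_{\eps\to 0}\mathcal{DG}_\eps(u_\eps,\Omega)\leq c\lambda\,\mathcal{H}^{n-1}(\Sigma\cap\Omega)$, completing the proof with $k=0$.
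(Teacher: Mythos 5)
Your liminf step is correct and identical to the paper's (drop the nonnegative factor and invoke Modica--Mortola), and your diagnosis of why a bounded first-order correction of the profile cannot kill $v_\eps$ is sound. Your recovery sequence, however, takes a genuinely different route from the paper's. The paper never makes $v_\eps$ vanish: it perturbs the one-dimensional profile through the ODE $\partial_s q_\eps=\sqrt{W(q_\eps)-\eps\eta(t,s)}$, which turns the first variation into the explicit $O(1)$ quantity $2\partial_s q_\eps\,\Delta d_\Sigma-\partial_s\eta/\partial_s q_\eps$, and then chooses $\eta(t,s)$ close to $-2t\int_{-\infty}^{s}\dot q_0^2\,d\tau$ with $t=H(y)$, so that the limiting value $F(\eta)$ of $G_\eps$ is as small as desired (Lemma~\ref{lemma:infF=0}); a cut-off in $s$ and a diagonal argument conclude. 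That construction is completely explicit, uses no PDE theory, and works for general potentials and finite-perimeter sets (Theorem~\ref{teo:gammalim}). Your construction instead makes $v_\eps\equiv 0$ on the concentration set by inserting an exact critical point, at the price of having to understand the Dirichlet minimizer $u_\eps^{*}$ of the Allen--Cahn energy in a curved tube that shrinks with $\eps$. Your sandwich argument for $\int_{T_\eps}\mu_\eps[u_\eps^{*}]\to c\,\mathcal H^{n-1}(\Sigma\cap\Omega)$ is correct.

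The genuine gap is the assertion that in the gluing annulus $u_\eps^{*}$ is within $O(e^{-L_\eps/\sqrt2})=O(\eps^{C/\sqrt2})$ of $\{0,2\pi\}$. This is false whenever $H\not\equiv 0$, and nothing in your argument controls $u_\eps^{*}$, $\nabla u_\eps^{*}$ or $\Delta u_\eps^{*}$ near $\partial T_\eps$. The transition layer of the minimizer is not pinned at $\Sigma$: translating the layer to $\{d_\Sigma=t\}$ changes the energy by roughly $-c\,t\int_\Sigma H\,d\mathcal H^{n-1}+O(t^2)$, while the cost of placing it at rescaled distance $\delta$ from $\partial T_\eps$ (where the boundary data must be attained exactly) is only $O(e^{-c_0\delta})$ per unit area, with $c_0$ determined by $W''(0)=W''(2\pi)$. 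Balancing the two, the layer drifts to rescaled distance $\Theta(\log(1/\eps))$ from $\partial T_\eps$ wherever $H\neq 0$, with a constant \emph{independent of your} $C$; the resulting closeness to the wells in the annulus is then only of order $\eps$ to a fixed power, not $\eps^{C/\sqrt2}$. A revised power count may well still close (heuristically $|u_\eps^{*}-2\pi|=O(\eps)$, $|\nabla u_\eps^{*}|=O(1)$ in the annulus give $v_\eps=O(1)$, $\mu_\eps=O(\eps)$ on a set of volume $O(\eps)$), but \emph{proving} these pointwise decay rates for the actual minimizer and its first two derivatives --- via barriers built from shifted profiles $q\big((d_\Sigma\mp K\eps\log(1/\eps))/\eps\big)$, which are only sub/supersolutions up to curvature errors, or via density estimates for minimizers --- is a substantial piece of elliptic analysis that your proposal dismisses as routine. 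Without it, the gluing terms $2\eps(u_\eps^{*}-q)\Delta\big(\chi_\eps(d_\Sigma/\eps)\big)$ and $4\eps\nabla\chi_\eps\cdot\nabla(u_\eps^{*}-q)$ in $2\eps\Delta u_\eps$, of nominal size $\eps^{-1}|u_\eps^{*}-q|$ and $|\nabla(u_\eps^{*}-q)|$ respectively, are uncontrolled on a region where $\mu_\eps$ need not be small, and $G_\eps(u_\eps,\Omega)$ could blow up.
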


We point out that we have added a factor $2$ in front of the laplacian so that, if $u \in W^{2,2}(\Omega)$, the squared term is really the $L^2$-gradient of the Allen-Cahn energy, that is the functional
\begin{equation}\label{def:Eeps}
E_\eps (u,\Omega):=\int_{\Omega} \left[\eps|\nabla u|^2 + \frac{W(u)}{\eps} \right]\,dx,    
\end{equation}
if $u \in W^{1,2}(\Omega)$, and $E_\eps(u) := +\infty$, if $u \in L^1(\Omega)\setminus W^{1,2}(\Omega)$. Here $W \colon \R \to  [0,+\infty)$ is a multiple-well potential, like $W(u)=1-\cos u$, as in the conjecture of De Giorgi, or the more popular double-well potential $W(u)=(1-u^2)^2$. Precise assumptions on $W$ will be stated in Section~\ref{section:assump-state}.

The $\Gamma$-convergence of the family $E_\eps$ is the object of the celebrated Modica-Mortola theorem (see \cite{DG-Fra}, \cite{Modica-Mortola} and \cite{Modica}) which in this case says that
$$\Gamma(L^1(\Omega))-\lim_{\eps \to 0^+} E_\eps\left(\chi^{a,b} _E,\Omega\right)= \sigma_W ^{a,b} \Per(E,\Omega),$$
where now $E$ has finite perimeter $\Per(E,\Omega)$ in $\Omega$, $a<b$ are two consecutive zeros of $W$, $\chi^{a,b} _E$ is a suitable modification of the characteristic function, defined as
\begin{equation}\label{def:chi}
\chi^{a,b} _E (x):=\begin{cases}
a &\mbox{if }x \notin E,\\
b &\mbox{if }x \in E,
\end{cases}    
\end{equation}
and
\begin{equation}\label{def:sigma}
\sigma ^{a,b} _W :=2\int_a ^b \sqrt{W(u)}\, du.    
\end{equation}

We observe that in the case $W(u)=1-\cos u$ it turns out that $\sigma^{0,2\pi}_W=c$, so De~Giorgi's conjecture is actually saying that the functional
\begin{equation}\label{def:Geps}
G_\eps (u,\Omega):= \int_{\Omega} \left[2\eps\Delta u-\frac{W'(u)}{\eps} \right]^2 \left[\eps|\nabla u|^2 + \frac{W(u)}{\eps} \right]\,dx    
\end{equation}
is an approximation for a multiple of the Willmore functional
\[
    \mathcal{W}(\Sigma,\Omega):=\int_{\Sigma \cap \Omega} H^2 d\mathcal{H}^{n-1},
\]
provided $\Sigma$ is of class $C^2$.

This seems reasonable because the mean curvature is known to represent the first variation of the perimeter and the term $2\eps \Delta u - W'(u)/\eps$ represents the gradient of the functional $E_\eps$. 
Moreover, if $\{u_\eps\}$ is a family of functions that converges to $\chi_E ^{a,b}$ in $L^1$, then the energy densities, that are the (normalized) measures
\begin{equation}\label{def:mueps}
\mu_\eps:=\frac{1}{\sigma^{a,b}_W}\left[\eps|\nabla u_\eps|^2 + \frac{W(u_\eps)}{\eps} \right]\Leb^n,
\end{equation}
where $\Leb^n$ is the $n$-dimensional Lebesgue measure, in the limit should be larger than or equal to the measure $\mathcal{H}^{n-1} \mres \Sigma$, as a consequence of the $\Gamma$-convergence of $E_\eps$.

\paragraph{}
In the case $W(u)=(1-u^2)^2$, it is proved in \cite{Bel-Pao-Bologna} that this is what actually happens when one considers the usual recovery sequences for the $\Gamma$-limit of $E_\eps$, that consist of a slight modification of the functions $u_\eps(x):=q_0(\dsig(x)/\eps)$, where $q_0$ is the optimal one-dimensional profile and $\dsig$ is the signed distance from $\Sigma=\partial E$ (precise definitions are given in the next section). More specifically, an estimate from above for the $\Gamma$-$\limsup$ of $\mathcal{DG}_\eps$ with a positive constant $k>0$ was proved.

Moreover, the authors of \cite{Bel-Pao-Bologna} proposed to investigate the functional
\[
    \widehat{G}_\eps(u,\Omega):=\int_{\Omega} \frac{1}{\eps}\left(2\eps\Delta u-\frac{W'(u)}{\eps} \right)^2 \,dx,
\]
in place of $G_\eps$, in order to simplify the problem, and they proved a $\Gamma$-$\limsup$ estimate (with a positive constant $k>0$) also for the functionals $E_\eps+\widehat{G}_\eps$.

The modification is motivated by the fact that the second factor in the integrand of $G_\eps$
should be proportional to $\eps^{-1}$ near $\Sigma$, while the contribution of both factors far from this boundary should not be relevant for the $\Gamma$-limit.

However, the $\Gamma$-$\liminf$ estimate turned out to be much more involved  and, after some partial results by Bellettini and Mugnai \cite{Bellettini-Mugnai} and Moser \cite{Moser}, the problem has been solved in dimensions 2 and 3 by Röger and Schätzle \cite{Roger-Schatzle}, and reproved differently in dimension 2 by Nagase and Tonegawa \cite{Nagase-Tonegawa}, while it is still open in higher dimensions. More precisely, in the special case $W(u)=(1-u^2)^2$ and $n \in\{2,3\}$, Röger and Schätzle were able to prove that if $\Sigma$ is of class $C^2$ then
\[
    \Gamma(L^1(\Omega))-\lim_{\eps \to 0^+} (E_\eps+\widehat{G}_\eps)\left(\chi^{-1,1}_E,\Omega\right)= \sigma^{-1,1}_W \mathcal{H}^{n-1}(\Sigma\cap \Omega) + k\int_{\Sigma\cap\Omega} H^2\, d\mathcal{H}^{n-1},
\]
for some positive constant $k>0$. Moreover, they also proved (see Theorem~4.1 and Theorem~5.1 in \cite{Roger-Schatzle}) that if $\{u_\eps\} \subset W^{2,2}(\Omega)$ is a family of functions for which
$$E_\eps(u_\eps, \Omega)+\widehat{G}_\eps(u_\eps,\Omega)\leq C,$$
then the weak* limit (up to subsequences) of the measures $\mu_\eps$ is an integral $(n-1)$-varifold with some estimates on the curvature depending on the weak* limit of the measures 
$$\alpha_\eps:=\frac{1}{\eps}\left(2\eps\Delta u_\eps-\frac{W'(u_\eps)}{\eps} \right)^2 \Leb^n.$$

As the authors explain in the introduction, a crucial step in their proof is the control of the so-called discrepancy measures
\begin{equation}\label{def:discrepancy}
\xi_\eps:=\left[ \eps |\nabla u_\eps|^2 - \frac{1}{\eps}W(u_\eps) \right] \Leb^n,
\end{equation}
that is obtained in Proposition 4.4 and Proposition 4.9 in \cite{Roger-Schatzle}.

For an exhaustive list of references about the approximation of the Willmore functional and other variants of this model we refer to \cite{BMO} and to the recent paper \cite{Ratz-Roger}, where the interested reader can also find many numerical simulations.

\paragraph{}
The main result of this paper is a proof that, surprisingly, De Giorgi's conjecture holds true with $k=0$. This means that, as opposite to $\widehat{G}_\eps$, the functional $G_\eps$ does not contribute to the $\Gamma$-limit of $\mathcal{DG}_\eps$ that, instead, turns out to be the same as the one obtained with the functionals $\lambda E_\eps$ alone, and this holds with a quite general class of potentials $W$. This also implies that Conjecture 5 in \cite{DeGiorgi} does not hold, because the perimeter alone, if considered as a function of $\Omega$, is clearly subadditive.

The proof of course consists in finding a family $\{u_\eps\}\subset W^{2,1}_{\mathsf{loc}}(\R^n)$ of functions converging in $L^1$ to $\chi^{a,b}_E$ for which
\[
    \lim_{\eps\to 0^+} E_\eps(u_\eps,\R^n) = \sigma^{a,b}_W \cdot \mathcal{H}^{n-1}(\Sigma)
    \quad \text{and} \quad
    \lim_{\eps\to 0^+} G_\eps(u_\eps,\R^n)=0.
\]

We construct these functions by perturbing the classical recovery sequences for $E_\eps$. In particular, we need to modify the optimal one-dimensional profile $q_0$ in such a way that the two factors in the functional $G_\eps$ concentrate in different regions, so that their product becomes small. We do this by means of a suitable differential equation that prescribes the discrepancy measures in dimension one, providing us with the required perturbed one-dimensional profile, that can be further modified with cut-off functions, as in the classical theory by Modica and Mortola, to produce the final family $\{u_\eps\}$.

This would be enough in the case when $\Sigma$ is a sphere, hence it has constant mean curvature, but it turns out that the general case is more delicate. Indeed, in this case the perturbation of the optimal profile has to be adjusted depending on the local geometry of $\Sigma$. We do this by adding a parameter in the equation for the perturbed profile, in order to gain more flexibility in the construction of the recovery sequence.

We recall that in the functional $\widehat{G}_\eps$ the second factor has been replaced by the constant $\eps^{-1}$ so our strategy, that allows the first factor to be very large in regions where the other one is small, is not effective in decreasing the value of the modified functional, because in this case such regions do not exist (actually $\widehat{G}_\eps(u_\eps, \mathbb{R}^n) \to +\infty$ for our choice of $\{u_\eps\}$).

As a corollary of our main result, we obtain that the limit of the energy densities $\mu_\eps$ is not necessarily $(n-1)$-rectifiable, even if the functionals are equibounded. In fact, it can also happen that these measures converge to a Dirac mass or, more generally, to a measure that is not absolutely continuous with respect to $\mathcal{H}^{n-1}$.

In the opposite direction, despite this unexpected result, it seems that the boundedness of the family $\{G_\eps(u_\eps,\R^n)\}$ still carries some information on the behavior of the energy densities. More specifically, we believe that it could prevent the diffusion of the energy on large sets, while in general the limit of $\mu_\eps$ under the only assumption of the boundedness of the energy $E_\eps(u_\eps,\R^n)$ can be any positive finite measure. Indeed, in the toy model of radial symmetry, if we remove the origin (where a Dirac mass could appear), we can prove that if $\{u_\eps\}\subset W^{2,1}_{\mathsf{loc}}(\R^n)\cap W^{1,2}_{\mathsf{loc}}(\R^n)$ is a family of functions with
\[
    E_\eps(u_\eps,\R^n)+G_\eps(u_\eps,\R^n)\leq C,
\]
then any weak* limit of $\mu_\eps$ is an integral $(n-1)$-varifold if restricted to $\R^n\setminus \{0\}$ (which of course in this case is simply a union of concentric spheres). The proof of this fact is based on a blow-up argument, similar to the one in \cite{Bellettini-Mugnai}.

We observe that the radial symmetry and the removal of the origin automatically imply that the limit measure is absolutely continuous with respect to $\mathcal{H}^{n-1}$, but these assumptions do not prevent a priori that this measure may be supported on sets with larger dimension.
In particular, if one only assumes the boundedness of the energies $E_\eps(u_\eps,\R^n)$, without additional assumptions on $G_\eps(u_\eps,\R^n)$, then the limit of the energy densities can be any positive finite radially symmetric measure, so the integrality of the limit measure is not trivial.

We point out that the radial symmetry is not even ruling out the ``pathology'' that leads to the disappearance of $G_\eps$ in the limit, in fact the recovery sequence for the $\Gamma$-limit of $\mathcal{DG}_\eps$ when $E$ is a ball can be made of radially symmetric functions.

For this reason, we think that this positive result could be true even in the general case, in the sense that we expect that, if $G_\eps(u_\eps,\R^n)$ is uniformly bounded, then the limit of the energy densities should be concentrated on a $\mathcal{H}^{n-1}$-finite set. However, a proof of this fact without the radial assumption would probably be much more complicated, as it happens in the case of the modified functionals considered in \cite{Roger-Schatzle}.

\paragraph{}
The paper is organized as follows. In Section 2 we introduce some notation, we recall some useful facts from the literature and we prove some preliminary results. In particular, in Subsection~\ref{subse:perturbed-one-dim} we prove all the properties of the one-parameter family of ODEs needed to construct the sequence $\{ u_{\eps} \}$ which makes $G_{\eps}(u_{\eps}, \mathbb{R}^n)$ infinitesimal.
In Section 3 we prove our main result (Theorem~\ref{teo:gammalim}), that is the computation of the $\Gamma$-limit of the functionals $\mathcal{DG}_\eps$.
In Section 4 we prove our integrality result in the radially symmetric case (Theorem~\ref{theorem:main2}).

\section{Statements and preliminary lemmas}
\label{section:assump-state}

In this section we introduce the precise setting of our work and we prove some preliminary lemmas.

\subsection{Assumptions on $W$ and main results}

First of all, we state our assumptions on the potential, that is a function $W:\R \to [0,+\infty)$ with the following properties:
\begin{description}
    \item[(W1)] $W \in C^2(\R)$,
    \item[(W2)] there exists an interval $[a,b]\subset \R$ such that $W(a)=W(b)=0$ and $W(u)>0$ for every $u \in (a,b)$,
    \item[(W3)] $W''(a)>0$ and $W''(b)>0$.
\end{description}

We state also an extra assumption, stronger than (W3), that we need only for proving the results of Section~\ref{section:blow-up}.

\begin{description}
    \item[(W3+)] There exists $\kappa > 0$ such that $W''(u) \ge 2\kappa^2$ for every $u \in \mathbb{R} \setminus (a,b)$.
\end{description}

We observe that the classical double-well potential $W(u) = (1-u^2)^2$ satisfies assumption (W3+) with $\kappa = 2$. On the other hand it is evident that the potential $W(u) = 1 - \cos u$ in De~Giorgi's conjecture does not satisfy this condition.

\begin{remark}
    \label{remark:W4}
    \rm Combining assumption (W3+) together with $W(a)=W(b)=0$ we deduce that $W(u) \ge \kappa^2 (u-b)^2$ for every $u \ge b$ and $W(u) \ge \kappa^2 (u-a)^2$ for every $u \le a$, in particular $W$ has no zeros outside $[a,b]$. Moreover, in this case it holds 
    \begin{equation}
        \label{eq:W4-revisited}
        \frac{\abs*{W'(u)}}{\sqrt{W(u)}} \ge \kappa \quad \text{for every $u \in \mathbb{R} \setminus [a,b]$}.
    \end{equation}
    To prove~(\ref{eq:W4-revisited}) we notice that assumption (W3+) implies that $W'$ is increasing in $[b,+\infty)$ (and also in $(-\infty,a]$), hence
    \[
        W(u) = W(b) + \int_b^u W'(t) dt \le W'(u) (u-b) \quad \text{for every $u \in [b,+\infty)$}.
    \]
    Therefore, if $u > b$ we easily deduce from the previous inequality that
    \[
        \frac{W'(u)}{\sqrt{W(u)}} \ge \frac{W(u)}{u-b} \cdot \frac{1}{\sqrt{W(u)}} = \frac{\sqrt{W(u)}}{u-b} \ge \kappa,
    \]
    where in the last inequality we used $W(u) \ge \kappa^2 (u-b)^2$. The case $u < a$ is similar.
\end{remark}

Now we can state our main results.

\begin{theorem}\label{teo:gammalim}
Let $W \colon \R \to [0,+\infty)$ be a potential satisfying (W1), (W2) and (W3) and let $E\subset \R^n$ be a set with finite perimeter. Then there exists a family $\{u_\eps\}\subset C^2 (\R^n)$ of functions such that
$$\lim_{\eps \to 0^+}\left\|u_\eps - \chi^{a,b}_E\right\|_{L^1(\R^n)}= 0,$$
and
$$\lim_{\eps\to 0^+} \Big(\lambda E_\eps(u_\eps,\R^n)+G_\eps(u_\eps,\R^n) \Big) =\lambda \sigma_W ^{a,b} \Per(E),$$
where $\chi^{a,b}_E$ and $\sigma_W ^{a,b}$ are defined respectively in (\ref{def:chi}) and in (\ref{def:sigma}).
\end{theorem}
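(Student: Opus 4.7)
My plan is to first construct the recovery sequence when $E$ is bounded and $\Sigma = \partial E$ is of class $C^2$, and then extend to arbitrary finite-perimeter sets by a standard diagonal argument based on the density of smooth sets in $BV$ with simultaneous convergence of symmetric-difference measure and perimeter.

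For smooth $\Sigma$, I would use a modified Modica--Mortola ansatz: in a tubular neighborhood of $\Sigma$, set $u_\eps(x) = q_{\eps, \tau(y)}(d_\Sigma(x)/\eps)$, where $y$ is the nearest-point projection of $x$ onto $\Sigma$, $\tau(y)$ is a tunable real parameter, and $q_{\eps,\tau}$ is the perturbed one-dimensional profile constructed in Subsection~\ref{subse:perturbed-one-dim} as a solution of an ODE of the form $(q')^2 = W(q) + g_{\eps,\tau}(q)$ that prescribes the one-dimensional discrepancy; outside the tube one glues to $a$ and $b$ via smooth cut-offs, as in the classical theory. A direct computation, using $\lvert\nabla d_\Sigma\rvert = 1$ and the ODE, gives
\[
2\eps \Delta u_\eps - \frac{W'(u_\eps)}{\eps} = \frac{g'_{\eps,\tau}(q_\eps)}{\eps} + 2 q_\eps'\,\Delta d_\Sigma + (\text{lower order in } \eps),
\]
while the density factor equals $(2W(q_\eps) + g_{\eps,\tau}(q_\eps))/\eps$. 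The crucial point is that $\tau(y)$ must be tuned, pointwise on $\Sigma$, so that the two contributions to the Allen--Cahn gradient cancel at leading order; this is exactly the flexibility needed when $H$ is not constant on $\Sigma$, and it is the reason why the one-dimensional profiles must come in a one-parameter family rather than depending only on $\eps$.

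To verify the two limits I would apply the coarea formula in the tube, $dx = J(y,t)\,dt\,d\mathcal{H}^{n-1}(y)$ with $J(y,t) = \prod_i (1 - t\,k_i(y))$, reducing everything to one-dimensional integrals along the transition. The energy $E_\eps(u_\eps,\R^n)$ then converges to $\sigma_W^{a,b}\,\mathcal{H}^{n-1}(\Sigma)$ by the standard Modica--Mortola calculation, provided $g_{\eps,\tau}\to 0$ in a sufficiently quantitative sense; $G_\eps(u_\eps,\R^n)$ instead reduces to the integral over $\Sigma$ of a one-dimensional $q$-integral whose integrand, by the design of $g_{\eps,\tau}$ and the choice of $\tau(y)$, vanishes in the limit. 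Contributions from the cut-offs and from the expansion of $J$ in $t$ are of lower order and can be absorbed.

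The main obstacle is the construction and uniform analysis of the family $q_{\eps,\tau}$ carried out in Subsection~\ref{subse:perturbed-one-dim}: one must produce smooth profiles with the correct limits at $\pm\infty$, joint regularity in $(\eps,\tau)$ for $\tau$ in a compact interval containing the range of $(n-1)H$ on $\Sigma$, a quantitative bound on the energy deficit, and a quantitative bound on the $G_\eps$-residual strong enough to yield the required vanishing. Once these are available, the extension to arbitrary finite-perimeter $E$ is routine: approximate $E$ by smooth $E_k$ with $|E_k\triangle E|\to 0$ and $\Per(E_k)\to\Per(E)$, apply the smooth construction to each $E_k$ to obtain $u^{(k)}_\eps$, and diagonalize as $u_\eps := u^{(k(\eps))}_\eps$ with $k(\eps)\to\infty$ slowly enough to ensure $u_\eps\to\chi^{a,b}_E$ in $L^1$ together with $\lambda E_\eps(u_\eps,\R^n) + G_\eps(u_\eps,\R^n)\to \lambda\sigma_W^{a,b}\Per(E)$.
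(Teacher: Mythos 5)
Your scaffolding matches the paper's: reduce to smooth $\Sigma$ by density, use a perturbed one-dimensional profile whose discrepancy is prescribed by an ODE, let the perturbation depend on a parameter evaluated at the local mean curvature, compute via the coarea formula in a tubular neighborhood, and diagonalize. But the step you present as the crux --- choosing $\tau(y)$ so that ``the two contributions to the Allen--Cahn gradient cancel at leading order'' and the resulting one-dimensional integrand ``vanishes in the limit'' --- is not achievable, and the mechanism that actually makes $G_\eps$ disappear is absent from your argument. After the perturbation, the first factor of $G_\eps$ is, to leading order, $-\partial_s\eta(H,s)/\dot{q}_0(s)-2\dot{q}_0(s)H$. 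Making this vanish identically forces $\partial_s\eta(H,s)=-2\dot{q}_0(s)^2H$, i.e.\ $\eta(H,s)=-2H\int_{-\infty}^{s}\dot{q}_0^2\,d\tau$, which tends to $-H\sigma_W^{a,b}\neq 0$ as $s\to+\infty$; such a perturbation is inadmissible, because a right-hand side of the profile ODE that stays bounded away from $W(q)$ at infinity destroys the convergence of the profile to $b$ and hence the Modica--Mortola energy identity. No pointwise tuning of the parameter removes this obstruction, since $\int_{\R}2\dot{q}_0^2H\,ds=\sigma_W^{a,b}H\neq 0$ regardless of $\tau$.

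What the paper actually proves (Lemma~\ref{lemma:infF=0}) is that the limit of $G_\eps(u_\eps,\R^n)$ is bounded by a constant times $\int_{\R}\int_\Sigma\left(\partial_s\eta(H(y),s)+2\dot{q}_0(s)^2H(y)\right)^2$, whose infimum over compactly supported $\eta$ is $0$ but is \emph{not attained}: one truncates the ideal (non-compactly-supported) primitive over an interval $[L,2L]$, paying a cost of order $H^2/L$, and then sends $L\to+\infty$ through an additional diagonal argument that your proposal does not foresee. The reason this truncation is cheap is the precise structure of the second factor of $G_\eps$: by the prescribed discrepancy it equals $2(\partial_s q_\eps)^2/\eps+\eta$, and the weight $\dot{q}_0^2$ exactly compensates the $\dot{q}_0^{-2}$ singularity of the residual. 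Your write-up never invokes this structure; as presented, the same reasoning would equally ``prove'' that $\widehat{G}_\eps$ contributes nothing, which is false (by the paper's final remark, $\widehat{F}(\eta)\ge 4\int_{\R}\int_\Sigma\dot{q}_0^2H^2$ for every compactly supported $\eta$). So the ansatz is right, but there is a genuine gap at the one point where the theorem is decided.
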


Since the $\Gamma$-$\liminf$ estimate is an immediate consequence of the Modica-Mortola theorem, this result implies the validity of De Giorgi's conjecture with $k=0$, actually with more general sets $E$ and more general potentials. Moreover, we have the following corollary.

\begin{corollary}\label{cor:mueps_to_delta}
There exists a family $\{u_\eps\} \subset C^2(\R^n)$ of functions such that
    \[
        \limsup_{\eps\to 0^+} \Big( E_\eps(u_\eps,\R^n)+G_\eps(u_\eps,\R^n)\Big) <+\infty,
    \]
and
    \[
        \mu_\eps \overset{*}{\weakto} \delta_0 \ \text{in duality with $C_0(\R^n)$},
    \]
where $\mu_\eps$ are defined in (\ref{def:mueps}), $\delta_0$ denotes the Dirac mass centered at zero and $C_0(\R^n)$ is the space of continuous functions vanishing at infinity.
\end{corollary}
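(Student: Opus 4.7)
The strategy is to invoke Theorem~\ref{teo:gammalim} on a sequence of sets $\{E_m\}_{m \in \N}$ whose boundaries collapse to the origin while keeping the perimeter equal to $1$, and then take a diagonal subsequence. For each $m \in \N$ one can easily construct such a set $E_m \subset B(0,1/m)$: for instance, take $E_m$ to be a disjoint union of many small balls (of radius much smaller than $1/m$) packed inside $B(0,1/m)$, with radii finely tuned so that the total surface area is exactly $1$.

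Applying Theorem~\ref{teo:gammalim} (with $\lambda = 1$) to each $E_m$ produces a family $\{u_\eps^{(m)}\}_\eps \subset C^2(\R^n)$ converging in $L^1(\R^n)$ to $\chi_{E_m}^{a,b}$ with $E_\eps(u_\eps^{(m)},\R^n) + G_\eps(u_\eps^{(m)},\R^n) \to \sigma_W^{a,b}$ as $\eps \to 0^+$. Since $G_\eps \geq 0$ and the Modica--Mortola $\Gamma$-$\liminf$ inequality gives $\liminf_\eps E_\eps(u_\eps^{(m)},\R^n) \geq \sigma_W^{a,b}\Per(E_m) = \sigma_W^{a,b}$, the energy $E_\eps$ alone converges to $\sigma_W^{a,b}$, i.e., $\mu_\eps^{(m)}(\R^n) \to 1$. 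Moreover, applying the same $\Gamma$-$\liminf$ to any open set $A$ containing $\overline{B(0,1/m)}$ yields $\liminf_\eps \mu_\eps^{(m)}(A) \geq \Per(E_m,A) = 1$, whence $\mu_\eps^{(m)}(\R^n \setminus A) \to 0$: no mass of $\mu_\eps^{(m)}$ escapes from any fixed neighbourhood of the origin as $\eps \to 0$.

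A diagonal extraction now concludes. Fix a countable dense family $\{\varphi_k\}_{k \in \N}$ in $C_0(\R^n)$. For any $\varphi \in C_0(\R^n)$,
\[
    \int_{\R^n} \varphi\,d\mu_\eps^{(m)} - \varphi(0) = \int_{B(0,2/m)} \!\!(\varphi - \varphi(0))\,d\mu_\eps^{(m)} + \int_{\R^n \setminus B(0,2/m)} \!\!(\varphi - \varphi(0))\,d\mu_\eps^{(m)} + \varphi(0)\bigl(\mu_\eps^{(m)}(\R^n) - 1\bigr),
\]
where the first summand is bounded by $\omega_\varphi(2/m)\,\mu_\eps^{(m)}(\R^n)$ (with $\omega_\varphi$ the modulus of continuity of $\varphi$ at the origin) and the other two summands vanish as $\eps \to 0$ by the previous paragraph. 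Hence for each $m$ one may select $\eps_m \in (0,1/m)$ small enough that $E_{\eps_m}(u_{\eps_m}^{(m)},\R^n) + G_{\eps_m}(u_{\eps_m}^{(m)},\R^n) \leq \sigma_W^{a,b}+1$ and $\bigl|\int \varphi_k\,d\mu_{\eps_m}^{(m)} - \varphi_k(0)\bigr| < 1/m$ for $k = 1,\ldots,m$. Relabelling $u_\eps := u_{\eps_m}^{(m)}$ along the sequence $\eps_m \to 0$, and exploiting density of $\{\varphi_k\}$, yields $\mu_\eps \overset{*}{\rightharpoonup} \delta_0$ in duality with $C_0(\R^n)$. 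The only nontrivial ingredient is the localization argument in the second paragraph, which follows from the Modica--Mortola $\Gamma$-$\liminf$ combined with the global energy convergence of Theorem~\ref{teo:gammalim}.
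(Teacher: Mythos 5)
Your proposal is correct and follows essentially the same route as the paper's own proof: collapse a sequence of smooth sets of unit perimeter to the origin (the paper uses finite unions of concentric spheres, you use packed small balls), apply Theorem~\ref{teo:gammalim} with $\lambda=1$ to get $\mu_\eps^{(m)}(\R^n)\to 1$ and bounded functionals, localize via the Modica--Mortola $\Gamma$-$\liminf$ on neighbourhoods of the origin, and diagonalize. One harmless bookkeeping slip: for fixed $m$ the term $\omega_{\varphi_k}(2/m)\,\mu_\eps^{(m)}(\R^n)$ does not tend to zero as $\eps\to 0^+$, so you cannot in general force $\bigl|\int\varphi_k\,d\mu_{\eps_m}^{(m)}-\varphi_k(0)\bigr|<1/m$; you should instead require only that the last two summands of your decomposition be small and observe that $\omega_{\varphi_k}(2/m)\to 0$ as $m\to+\infty$, which still yields the desired weak-$*$ convergence.
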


\begin{proof}
For any positive integer $k \in \N^+$ and for any $j \in \{-k,\dots,k\}$, let us set
$$r_{k,j}:=\left[\frac{1}{\omega_{n-1}}\left(\frac{1}{2k+1} + \frac{j}{2k(2k+1)}\right)\right]^\frac{1}{n-1},$$
where $\omega_{n-1}$ denotes the $\mathcal{H}^{n-1}$ measure of the unit sphere in $\mathbb{R}^n$. Let us consider the hypersurfaces
$$\Sigma_k:=\bigcup_{j=-k} ^{k} \partial B_{r_{k,j}},$$
and the bounded sets $E_k$ such that $\Sigma_k=\partial E_k$, namely
$$E_k=(B_{r_{k,k}}\setminus B_{r_{k,k-1}}) \cup (B_{r_{k,k-2}}\setminus B_{r_{k,k-3}}) \cup \dots \cup (B_{r_{k,-k+2}}\setminus B_{r_{k,-k+1}}) \cup B_{r_{k,-k}},$$
where all balls are centered in the origin.

We observe that for any $k \in \N^+$ it holds
$$\Per (E_k)=\mathcal{H}^{n-1}(\Sigma_k)=\sum_{j=-k} ^k \omega_{n-1} r_{k,j}^{n-1}=1.$$

By Theorem \ref{teo:gammalim}, for any $k\in \N^+$ we can find a family $\{u^k _\eps\}\subset C^2(\R^n)$ of functions such that
\[
    \lim_{\eps \to 0^+}\left\|u_\eps^k - \chi^{a,b}_E\right\|_{L^1(\R^n)}= 0,
    \quad
    \lim_{\eps\to 0^+} G_\eps(u^k _\eps,\R^n)= 0,
\]
and
\[
    \lim_{\eps\to 0^+} \mu_\eps ^k (\R^n)
    =
    \frac{1}{\sigma_W ^{a,b}} \lim_{\eps\to 0^+} E_\eps(u^k _\eps,\R^n)= \Per(E_k) = 1,
\]
where $\mu^k _\eps$ is the energy density associated to $u^k _\eps$.

On the other hand, for every $r>r_{k,k}$ from Modica-Mortola theorem we deduce that
$$\liminf_{\eps \to 0^+} \mu^k _\eps (B_{r})=\frac{1}{\sigma_W ^{a,b}} \liminf_{\eps \to 0^+} E_\eps (u^k _\eps,B_r)\geq \Per(E_k,B_r)=1,$$
and hence
$$\lim_{\eps\to 0^+} \mu_\eps ^k (\R^n\setminus B_r)=\lim_{\eps\to 0^+} \Big(\mu_\eps ^k (\R^n) - \mu^k _\eps (B_{r})\Big)=0.$$

Therefore, with a diagonal procedure we can find a family $\{k_\eps\}$ of positive integer numbers such that $k_\eps \to +\infty$ as $\eps \to 0^+$ for which, if we set $u_{\eps} := u_{\eps}^{k_\eps}$ and $\mu_{\eps} := \mu_{\eps}^{k_{\eps}}$, then
$$\lim_{\eps\to 0^+} G_\eps(u_\eps,\R^n)= 0, \quad \lim_{\eps\to 0^+} E_\eps(u_\eps ,\R^n)= \sigma_W ^{a,b},$$
and such that for every $r>0$
$$\lim_{\eps \to 0^+} \mu_\eps(B_r)=1, \quad \lim_{\eps \to 0^+} \mu_\eps(\R^n \setminus B_r)=0.$$
This clearly implies that $\mu_\eps \overset{*}{\weakto} \delta_0$.
\end{proof}

\begin{remark}\label{rem:limit_mueps}
\rm Using more or less the same argument (with tubular neighborhoods instead of balls) one can obtain as limit of $\mu_\eps$ all the measures of the kind $\mathcal{H}^d \mres K$, where $K\subset \R^n$ is a smooth and closed $d$-dimensional submanifold of $\R^n$, for some $d \in \{0,\dots,n-2\}$.
With some additional effort (for example perturbing the tubular neighborhoods), it should be possible to obtain also more general measures supported on submanifolds with codimension larger than one and probably also more general classes of measures concentrated on sets that are $\mathcal{H}^{n-1}$ negligible.
\end{remark}

Now we state our second main result which suggests that, despite the fact that $G_\eps$ vanishes in the $\Gamma$-limit and the examples of Corollary \ref{cor:mueps_to_delta} and Remark \ref{rem:limit_mueps}, its boundedness still restricts in some way the class of possible limits of the energy densities. Unfortunately, we are able to prove such a result only in the simplified case of radially symmetric functions.

\begin{theorem}
\label{theorem:main2}
Let $W \colon \R \to [0,+\infty)$ be a potential satisfying (W1), (W2) and (W3+). Let $\{u_\eps\} \subset W_{\mathsf{loc}}^{2,1}(\R^n)\cap W_{\mathsf{loc}}^{1,2}(\R^n)$ be a family of radially symmetric functions such that
\[
    \limsup_{\eps\to 0^+} \Big( E_\eps(u_\eps,\R^n)+G_\eps(u_\eps,\R^n) \Big) <+\infty,
\]
and let $\mu_\eps$ and $\xi_\eps$ be defined respectively in (\ref{def:mueps}) and in (\ref{def:discrepancy}).

Then, for any sequence $\eps_k \to 0^+$ there exist an at most countable index set $I$ and a family $\{ r_i \}_{i \in I} \subset (0,+\infty)$ of radii such that
\begin{equation}\label{eq:somma_ri_finita}
\sum_{i \in I} r_i ^{n-1} <+\infty,
\end{equation}
and, up to (not relabelled) subsequences,
\begin{equation}\label{eq:int-var}
\mu_{\eps_k} \overset{*}{\weakto} \sum_{i \in I} \mathcal{H}^{n-1} \mres \partial B_{r_i} \
    \text{in duality with $C_0(\mathbb{R}^n \setminus \{0\})$.}    
\end{equation}
Moreover, we have
\begin{equation}\label{eq:discr_to_0}
\xi_{\eps} \overset{*}{\weakto}0 \
    \text{in duality with $C_0(\mathbb{R}^n \setminus \{0\})$.}
\end{equation}
\end{theorem}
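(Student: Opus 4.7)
The plan is to exploit the radial symmetry to reduce the problem to a one-dimensional analysis on $(0,+\infty)$ with weight $r^{n-1}$, and then to perform a blow-up argument at each candidate radius $r_0>0$, in the spirit of \cite{Bellettini-Mugnai}, in order to classify the rescaled limit profiles and quantize the limit measure as a sum of sphere measures. Writing $u_\eps(x)=v_\eps(|x|)$ and integrating out the angular variable, the radial parts of $\mu_\eps$ and $\xi_\eps$ become weighted one-dimensional measures on $(0,+\infty)$. The bound on $E_\eps$ yields tightness on any compact subset of $(0,+\infty)$, so up to a subsequence the radial part of $\mu_{\eps_k}$ converges weakly-$*$ to some radial limit $\bar\mu$, and the whole task reduces to identifying $\bar\mu$.

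At each $r_0>0$, introduce the rescaled profile $w_\eps(s):=v_\eps(r_0+\eps s)$. Using $\Delta u_\eps = v''_\eps + (n-1)v'_\eps/r$ and the change of variables $r=r_0+\eps s$, the hypothesis $G_\eps(u_\eps,\R^n)\le C$ yields, for every $R>0$ with $\eps R<r_0/2$,
\begin{equation*}
\int_{-R}^{R}\Bigl[2w''_\eps-W'(w_\eps)+\tfrac{2\eps(n-1)}{r_0+\eps s}w'_\eps\Bigr]^2\bigl[(w'_\eps)^2+W(w_\eps)\bigr](r_0+\eps s)^{n-1}\,ds\;\le\;\frac{\eps^2}{\omega_{n-1}}\,G_\eps(u_\eps,\R^n),
\end{equation*}
which is $O(\eps^2)$, and analogously $\int_{-R}^{R}[(w'_\eps)^2+W(w_\eps)](r_0+\eps s)^{n-1}\,ds\le E_\eps(u_\eps,\R^n)/\omega_{n-1}$. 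Combined with assumption (W3+), which keeps $w_\eps$ uniformly bounded in $L^\infty_{\mathsf{loc}}(\R)$, this yields (up to a further extraction) $w_\eps\to w_0$ strongly in $L^2_{\mathsf{loc}}(\R)$ and weakly in $W^{1,2}_{\mathsf{loc}}(\R)$. Passing to the limit in the displayed inequality shows that $w_0$ solves the one-dimensional Allen--Cahn equation $2w''_0=W'(w_0)$ on $\R$ with finite total energy; multiplying by $w'_0$ and integrating, the finite-energy tail forces the first integral $(w'_0)^2-W(w_0)$ to vanish, so that $w_0$ is either identically equal to $a$ or to $b$, or a finite concatenation of $N\geq 1$ monotone heteroclinic transitions between them, with total 1D energy $N\,\sigma_W^{a,b}$.

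From this classification one deduces the quantization: for any $r_0>0$ and any $R_\eps\to+\infty$ with $\eps R_\eps\to 0$, the mass of the radial measure $\bar\mu_\eps$ on $(r_0-\eps R_\eps,r_0+\eps R_\eps)$ converges to $N_{r_0}\,r_0^{n-1}$. Once the diffuse part of $\bar\mu$ has been ruled out (see below), $\bar\mu$ must then be purely atomic, supported on an at most countable family $\{r_i\}\subset(0,+\infty)$ with integer multiplicities $N_i\geq 1$; enumerating each $r_i$ with multiplicity $N_i$ yields (\ref{eq:int-var}), and (\ref{eq:somma_ri_finita}) follows from the total-mass bound on $\bar\mu$. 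Finally, (\ref{eq:discr_to_0}) follows from the identity $(w'_0)^2=W(w_0)$ on every blow-up, combined with a covering argument by annuli in $\R^n\setminus\{0\}$.

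The main technical obstacle is exactly the ``no diffuse part'' step: the $\eps$-scale blow-up only detects mass concentration at that scale, so one must independently exclude absolutely continuous or Cantor-type components of $\bar\mu$. The natural route is a multi-scale analysis exploiting the fact that transitions of $v_\eps$ are forced to happen at the natural scale $\eps$ by the $G_\eps$-bound (a transition on an intermediate scale $\delta_\eps\gg\eps$ produces a first variation of order $\delta_\eps^{-2}\gg\eps^{-1}$ and therefore a contribution to $G_\eps$ that diverges), so that any mass of $\bar\mu_\eps$ located in a window of intermediate size still comes in ``quanta'' of size $\sigma_W^{a,b}$ per transition; alternatively, one can control the discrepancy $\xi_\eps$ directly via $G_\eps$ and an identity for $d\xi_\eps/dr$, in the spirit of Propositions~4.4 and~4.9 of \cite{Roger-Schatzle}, and thereby upgrade the local blow-up quantization to a global statement. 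This is where the assumption $G_\eps(u_\eps,\R^n)\le C$ is essential, beyond the Modica--Mortola bound coming from $E_\eps$ alone.
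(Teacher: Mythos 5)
Your skeleton (radial reduction, blow-up at the $\eps$ scale, quantization of the limit mass into quanta $\sigma_W^{a,b}r_i^{n-1}$) is the same as the paper's, but two steps are not actually closed, and the second is fatal as written. First, you cannot pass to the limit in the $G_\eps$ bound to conclude that the blow-up limit $w_0$ solves $2w_0''=W'(w_0)$ on all of $\R$: the weight $(w_\eps')^2+W(w_\eps)$ degenerates exactly where $w_\eps$ is near $a$ or $b$ with small slope, and there the bound gives no control on $2w_\eps''-W'(w_\eps)$. The paper circumvents this by observing that $\big(2\ddot{\psi}_\eps-W'(\psi_\eps)\big)\dot{\psi}_\eps$ is the derivative of the discrepancy $\dot{\psi}_\eps^2-W(\psi_\eps)$, deducing a $W^{1,1}$ bound and strong convergence of the discrepancy to $0$ (Lemma~\ref{lemma:quadrato}), and then characterizing $\psi_0$ through the \emph{first-order} equation $\dot{\psi}_0^2=W(\psi_0)$ (Lemma~\ref{lemma:caratterizzazione}); the second-order information is used only on sets where $W(\psi_0)>\delta$.

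Second, the ``no diffuse part'' step, which you correctly flag as the main obstacle, is left as a heuristic, and neither sketched route is a proof. The scaling claim is also garbled: a transition over a length $\delta_\eps\gg\eps$ gives $2\eps\Delta u_\eps\sim\eps\delta_\eps^{-2}\ll\eps^{-1}\sim W'(u_\eps)/\eps$, so the first variation is of order $\eps^{-1}$, not $\delta_\eps^{-2}$; and in any case the dangerous scenario is not a slow transition but energy carried by $u_\eps$ hovering near $a$ or $b$ on sets of positive measure, which is precisely what produces the pathological limits of Corollary~\ref{cor:mueps_to_delta}. The paper's actual mechanism is Lemma~\ref{lemma:out-of-zeros}: on a region $\Omega_\eps$ where $u_\eps$ stays outside $(a+\delta_0,b-\delta_0)$, integrating $g(u_\eps)\big(\eps\Delta u_\eps-W'(u_\eps)/(2\eps)\big)$ by parts with $g=\pm2\sqrt{W}$ and using (W3+) through $\abs{g'}\ge\kappa/2$ yields
\[
E_\eps(u_\eps,\Omega_\eps)\le \frac{2}{\kappa}\left(\int_{\partial\Omega_\eps}\eps\abs{g(u_\eps)}\abs*{\frac{\partial u_\eps}{\partial\nu}}\,d\mathcal{H}^{n-1}+\sqrt{\eps\,\Leb^n(\Omega_\eps)\,G_\eps(u_\eps,\Omega_\eps)}\right)\to 0,
\]
and the hypothesis that $u_\eps$ stays near $a$ or $b$ between consecutive transitions is itself proved by a further application of Proposition~\ref{prop:blowup} at an interior minimizer, contradicting the vanishing of $\dot{u}_\eps$ there. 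Note that (W3+) enters essentially here (and nowhere in your argument beyond an $L^\infty$ bound). Without an argument of this type your proof does not exclude absolutely continuous or Cantor parts of the limit measure, so it is incomplete.
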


The vanishing of the discrepancy measures in the limit is an important property, usually referred to as ``equipartition of energy'' because it means that the two addenda in the Allen-Cahn energy are asymptotically equal. The proof of this property is a crucial step in many results involving the Allen-Cahn energy as, for example, in \cite{Ilmanen} and \cite{Roger-Schatzle}.

In our context it is not clear whether the uniform boundedness of $E_\eps(u_\eps,\R^n)+G_\eps(u_\eps,\R^n)$ is enough to deduce this property, since we are not able to extend our argument to the non-radial case, and actually even in the radial case we cannot exclude that the discrepancy measures concentrate at the origin.

On the other hand, we do not have examples in which the functionals are uniformly bounded but the discrepancy measures do not vanish in the limit. Indeed, even for the family that we use to prove Theorem~\ref{teo:gammalim} we have equipartition of energy and, as a consequence, the same holds for the family of Corollary~\ref{cor:mueps_to_delta}. Therefore the following natural question remains open, even if we consider only families of smooth functions.

\begin{question}
Let $\{u_\eps\}\subset W^{2,1}_{\mathsf{loc}}(\R^n) \cap W_{\mathsf{loc}}^{1,2}(\R^n)$ be a family of functions such that
\[
    \limsup_{\eps\to 0^+} \Big( E_\eps(u_\eps,\R^n)+G_\eps(u_\eps,\R^n) \Big) <+\infty.
\]
Is it true that $\xi_{\eps} \overset{*}{\weakto}0$?
\end{question}

\subsection{The perturbed one-dimensional profile}
\label{subse:perturbed-one-dim}

Now we recall the definition of the one-dimensional optimal profile, that is the solution $q_0 \colon \R\to\R$ of the following ordinary differential equation:
\begin{equation}\label{def:q0}
\begin{cases}
\dot{q}_0 (s) = \sqrt{W(q_0(s))}& \forall s\in \R, \\
q_0(0)=\frac{a+b}{2}.
\end{cases}\end{equation}

The assumptions on the potential $W$ ensure that $q_0$ satisfies the following properties (see for instance \cite{BNN}):

\begin{lemma}\label{lemma:q0}
Let $W \colon \R\to [0,+\infty)$ be a potential satisfying (W1), (W2) and let $q_0$ be the solution of (\ref{def:q0}). Then $q_0$ is well defined on the whole real line and
\begin{enumerate}[label=(\roman*)]
    \item $q_0\in C^3 (\R)$ and $a<q_0(s)<b$ for every $s \in \R$,
    \item $q_0 \colon \R\to (a,b)$ is increasing and invertible. Moreover $q_0^{-1} \in C^3(a,b)$. \label{q0_inv}
\end{enumerate}
Furthermore, if $W$ satisfies also (W3), there exists a positive constant $C>0$ such that
\begin{enumerate}[label=(\roman*)]\addtocounter{enumi}{2}
    \item $q_0(s)-a\leq Ce^{s/C}$ for every $s \in \R$,
    \item $b-q_0(s)\leq Ce^{-s/C}$ for every $s \in \R$,
    \item $0<\dot{q}_0(s)\leq Ce^{-|s|/C}$ for every $s \in \R$,
    \item $|\ddot{q}_0(s)|\leq Ce^{-|s|/C}$ for every $s \in \R$.
\end{enumerate}
\end{lemma}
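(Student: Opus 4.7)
I would split the six claims into three groups: the existence/regularity assertions (i)--(ii), which rely only on (W1) and (W2); the endpoint decay estimates (iii)--(iv), which contain the essential quantitative content; and the velocity and acceleration bounds (v)--(vi), which follow from (iii)--(iv) combined with the ODE itself.

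For (i) and (ii) the plan is to invoke standard ODE theory. On $(a,b)$ the map $u \mapsto \sqrt{W(u)}$ is $C^2$ (since $W > 0$ and $W \in C^2$ there), while $W(a) = W(b) = 0$ together with the non-negativity of $W$ and the $C^2$ regularity forces $W(u) \le C(u-a)^2$ near $a$ and $W(u) \le C(b-u)^2$ near $b$, so $\sqrt{W}$ extends to a Lipschitz function on $[a,b]$. Classical Cauchy--Lipschitz then yields a unique global solution of (\ref{def:q0}). Because $q \equiv a$ and $q \equiv b$ are equilibrium solutions, uniqueness forces $a < q_0(s) < b$ for every $s \in \mathbb{R}$. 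The bootstrap $\dot q_0 = \sqrt{W(q_0)} \in C^2(\mathbb{R})$, which is legitimate since $q_0$ stays inside the open set where $\sqrt{W}$ is $C^2$, upgrades the regularity to $q_0 \in C^3(\mathbb{R})$. Strict monotonicity comes from $\dot q_0 > 0$, and the inverse function theorem applied to $q_0 \in C^3$ yields $q_0^{-1} \in C^3(a,b)$.

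For (iii) and (iv) I would use (W3). Since $W$ is non-negative with $W(a) = 0$, the point $a$ is a local minimum and $W'(a) = 0$; a Taylor expansion combined with $W''(a) > 0$ produces constants $\nu > 0$ and $\delta > 0$ such that $\sqrt{W(u)} \ge \nu(u-a)$ on $[a, a+\delta]$. Setting $v(s) := q_0(s) - a$ and choosing $s_1$ with $v(s_1) = \delta$ (which exists and is finite thanks to $q_0(0) = (a+b)/2$ and the monotonicity of $q_0$), the differential inequality $\dot v \ge \nu v$ holds on $(-\infty, s_1]$. A Gronwall-type integration from $s$ to $s_1$ gives $v(s) \le \delta\,e^{\nu(s - s_1)}$ for $s \le s_1$, which is the decay required by (iii) in the regime $s \to -\infty$; for $s > s_1$ the trivial bound $v \le b-a$ is absorbed by enlarging $C$. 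Item (iv) is the mirror image near $b$, based on the symmetric estimate $\sqrt{W(u)} \ge \tilde\nu(b-u)$.

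Items (v) and (vi) come essentially for free once (iii) and (iv) are in hand. For (v), the upper Taylor bound $\sqrt{W(u)} \le C \min\{u-a,\ b-u\}$ on $[a,b]$ combines with (iii)--(iv) to give $\dot q_0(s) = \sqrt{W(q_0(s))} \le C' e^{-|s|/C'}$. For (vi) the key observation is the closed-form identity $\ddot q_0 = \tfrac{1}{2} W'(q_0)$, obtained by differentiating $\dot q_0^{\,2} = W(q_0)$; since $W'(a) = W'(b) = 0$, Taylor gives $|W'(u)| \le C\min\{u-a,\ b-u\}$, and combining with (iii)--(iv) yields the desired exponential bound on $|\ddot q_0|$. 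I do not expect any real obstacle: the only care needed is to enlarge the constants so that the bounds hold uniformly on all of $\mathbb{R}$, and not merely outside a compact interval—this is routine, since $\dot q_0$ and $\ddot q_0$ are continuous and hence bounded on any compact set.
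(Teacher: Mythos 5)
The paper does not actually prove this lemma: it is stated as a known fact with a citation to \cite{BNN}, so there is no internal argument to compare against. Your self-contained proof follows the standard route and is correct in substance: uniqueness plus the equilibria at $a,b$ traps $q_0$ in $(a,b)$; bootstrapping through the ODE gives $C^3$ regularity; the lower Taylor bound $\sqrt{W(u)}\ge\nu(u-a)$ from (W3) yields the exponential approach to the wells; and (v)--(vi) follow from the upper Taylor bounds and the identity $\ddot q_0=\tfrac12 W'(q_0)$. Two points deserve tightening. First, the bound $W(u)\le C(u-a)^2$ only gives the Lipschitz estimate of $\sqrt W$ \emph{at the point} $a$, not on a neighbourhood of $a$, so by itself it does not license Cauchy--Lipschitz there; either invoke the Landau--Glaeser inequality $|W'(u)|\le\sqrt{2\,W(u)\,\sup|W''|}$ (valid for any nonnegative $C^2$ function), which shows $(\sqrt W)'=W'/(2\sqrt W)$ is locally bounded, or avoid uniqueness at the equilibria altogether by noting that the time to reach $b$ from $(a+b)/2$ is $\int_{(a+b)/2}^{b}W(u)^{-1/2}\,du=+\infty$ precisely because of your bound $\sqrt{W(u)}\le C(b-u)$, so the solution never leaves $(a,b)$ where $\sqrt W$ is $C^1$. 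Second, the existence of a finite $s_1$ with $q_0(s_1)=a+\delta$ (and likewise the surjectivity of $q_0$ onto $(a,b)$ needed for $q_0^{-1}\in C^3(a,b)$) does not follow from monotonicity and $q_0(0)=(a+b)/2$ alone: you must also observe that the monotone limits $\lim_{s\to\pm\infty}q_0(s)$ are zeros of $W$ in $[a,b]$ (otherwise $\dot q_0$ would stay bounded away from $0$ and $q_0$ would be unbounded), hence equal to $a$ and $b$. With these two standard additions the argument is complete.
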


\begin{remark}\label{remark:transaltions}
\rm The solutions of the equation (\ref{def:q0}) with initial datum $q(s_0)= \gamma \in (a,b)$ are just the corresponding translations of $q_0$, more precisely
\begin{equation}
    \label{eq:traslati}
    q(s)=q_0\left(s-s_0+q_0^{-1}(\gamma)\right) \quad \forall s \in \R.
\end{equation}
\end{remark}

Now we introduce the perturbation of the one-dimensional optimal profile that is crucial for our proof. We point out that in the following the presence of the parameter $t$ is important for technical reasons, in order to be able to treat the general case. In fact, it is possible to prove Theorem \ref{teo:gammalim} when $E$ is a ball without introducing this additional parameter (and consequently in this case one can ignore the derivatives with respect to $t$).

Let us fix a function $\eta \in C^2 _c (\R^2)$ and consider the following family of ordinary differential equations:
\begin{equation}\label{def:q_eps}
\begin{cases}
\partial_s q_\eps (t,s) = \sqrt{W(q_\eps(t,s)) - \eps \eta(t,s)} & \forall s\in \mathcal{I}_{\eps,t}, \\
q_\eps(t,0)=\frac{a+b}{2}  &\forall t \in \R,
\end{cases}    
\end{equation}
where $\mathcal{I}_{\eps,t}$ is the maximal open interval containing $0$ for which the solution exists.

We observe that the notation is (almost) consistent with the notation for the optimal profile, in the sense that $q_\eps(t,s)=q_0(s)$ when $\eps=0$, and hence $\mathcal{I}_{0,t}=\R$ for every $t\in \R$. In the following lemma we show that $q_\eps$ is globally defined also if $\eps$ is small enough.

\begin{lemma}\label{lemma:I=R2}
Let $\eta \in C^2 _c (\R^2)$ and let $\eps_0, R>0$ be two positive real numbers such that $\supp \eta \subset [-R,R]^2$ and
\begin{equation}\label{def:eps0}
\eps_0 \cdot \max_{(t,s)\in \R^2} |\eta(t,s)| < \min_{|s| \leq R} W(q_0(2s)).
\end{equation}
Then for every $(\eps,t) \in [0,\eps_0) \times \R$ we have $\mathcal{I}_{\eps,t}=\R$.
\end{lemma}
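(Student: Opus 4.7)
The plan is to split the analysis into what happens inside and outside the support of $\eta$. Outside the square $[-R,R]^2$ the equation reduces to $\partial_s q_\eps=\sqrt{W(q_\eps)}$, which by Remark~\ref{remark:transaltions} admits a globally defined solution in $(a,b)$ starting from any initial value in $(a,b)$. Hence it will suffice to show that $\mathcal{I}_{\eps,t}$ contains $[-R,R]$ and that $q_\eps(t,\pm R)\in(a,b)$; the global extension to all of $\mathbb{R}$ then follows automatically by concatenation with the appropriate translate of $q_0$.

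The core of the argument is a first-contact comparison between $q_\eps(t,\cdot)$ and the dilated optimal profile $v(s):=q_0(2s)$, which satisfies $\dot v=2\sqrt{W(v)}$ with $v(0)=(a+b)/2=q_\eps(t,0)$. I would prove that $q_\eps(t,s)\le q_0(2s)$ for every $s\in[0,R]\cap\mathcal{I}_{\eps,t}$, and symmetrically $q_\eps(t,s)\ge q_0(2s)$ for $s\in[-R,0]\cap\mathcal{I}_{\eps,t}$. Setting $w(s):=q_0(2s)-q_\eps(t,s)$ on the right-hand interval, if $w$ became negative for the first time at some $\tilde s\in(0,R]$, then necessarily $w(\tilde s)=0$, $\dot w(\tilde s)\le 0$, and $q_\eps(t,\tilde s)=q_0(2\tilde s)$. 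A direct computation gives
\[
\dot w(\tilde s)=2\sqrt{W(q_0(2\tilde s))}-\sqrt{W(q_0(2\tilde s))-\eps\eta(t,\tilde s)},
\]
while hypothesis~(\ref{def:eps0}) yields $-\eps\eta(t,\tilde s)\le\eps_0\max|\eta|<W(q_0(2\tilde s))$, hence $W(q_0(2\tilde s))-\eps\eta(t,\tilde s)<2W(q_0(2\tilde s))$ and $\dot w(\tilde s)>(2-\sqrt{2})\sqrt{W(q_0(2\tilde s))}>0$, a contradiction. The argument on $[-R,0]$ is entirely symmetric.

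Combining these comparisons with the monotonicity of $q_\eps(t,\cdot)$ in $s$ (immediate from $\partial_s q_\eps\ge 0$), I get the trapping $q_\eps(t,s)\in[q_0(-2R),q_0(2R)]\subset(a,b)$ for every $s\in[-R,R]\cap\mathcal{I}_{\eps,t}$. On this strip the lower bound $W(q_\eps(t,s))\ge\min_{|s|\le R}W(q_0(2s))$ holds, and using~(\ref{def:eps0}) once more produces a uniform strictly positive lower bound on $W(q_\eps)-\eps\eta$. Standard ODE continuation then extends the solution through $[-R,R]$, while the opening paragraph handles the complement; hence $\mathcal{I}_{\eps,t}=\mathbb{R}$.

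The step I expect to be the main technical point is identifying the right comparison profile: the factor $2$ in $v(s)=q_0(2s)$ produces a gap $\alpha^2-1=3$ between the squared velocities, which is exactly what lets $\pm\eps_0\max|\eta|$ be absorbed using the single-copy inequality stated in~(\ref{def:eps0}). Any $\alpha>\sqrt{2}$ would in principle suffice, but the form of the hypothesis makes $\alpha=2$ the natural choice.
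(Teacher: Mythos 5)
Your proof is correct and follows essentially the same route as the paper: both arguments compare $q_\eps(t,\cdot)$ with the supersolution $q_0(2s)$ to trap the solution in $[q_0(-2R),q_0(2R)]\subset(a,b)$ on $[-R,R]$, deduce from~(\ref{def:eps0}) a strictly positive lower bound on $W(q_\eps)-\eps\eta$ there, and then extend globally via the unperturbed equation outside the support of $\eta$. The only difference is that you spell out the first-contact argument that the paper compresses into the single assertion that $q_0(2s)$ is a supersolution.
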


\begin{proof}
We observe that for every $(\eps,t) \in [0,\eps_0) \times \R$ the function $q_0(2s)$ is a supersolution of the equation. Combining this with the fact that $\partial_s q_\eps \geq 0$ in $\mathcal{I}_{\eps,t}$, we deduce
\begin{gather*}
q_0(2s) \leq q_\eps (t,s)\leq \frac{a+b}{2} \quad \forall s\in \mathcal{I}_{\eps ,t} \cap (-\infty,0], \\
\frac{a+b}{2} \leq q_\eps (t,s) \leq q_0 (2s) \quad \forall s \in \mathcal{I}_{\eps ,t}\cap [0,+\infty).
\end{gather*}
Therefore, from (\ref{def:eps0}), we obtain
$$\inf_{s \in \mathcal{I}_{\eps,t} \cap [-R,R]} \Big(W(q_\eps(t,s))-\eps \eta(t,s)\Big) >0,$$
for every $(\eps,t) \in [0,\eps_0)\times \R$. It follows that $[-R,R]\subseteq \mathcal{I}_{\eps,t}$ for every $(\eps,t) \in [0,\eps_0)\times \R$, otherwise the solution could be extended, violating the maximality of $\mathcal{I}_{\eps,t}$. On the other hand (\ref{def:q_eps}) outside $[-R,R]$ reduces to the equation for $q_0$, that has a globally defined solution for every initial datum in $(a,b)$ (see Remark \ref{remark:transaltions}). Hence $q_\eps$ is defined on the whole of $\R^2$ if $\eps\in [0,\eps_0)$.
\end{proof}

In the following lemma, we list the properties of $q_\eps$ that we need in the proof of Theorem~\ref{teo:gammalim}.

\begin{lemma}\label{lemma_ODE}
Let $\eta$ and $\eps_0>0$ be as in Lemma \ref{lemma:I=R2}. Then there exist two positive real numbers $\eps_1 \in (0,\eps_0]$ and $C>0$ depending only on the functions $\eta$ and $W$, such that for every $\eps \in [0,\eps_1)$ the function $q_\eps$ has the following properties:
\begin{enumerate}[label=(A\arabic*)]
    \item $q_\eps \in C^2(\R^2)$ and $a< q_\eps(t,s)< b$ for every $(t,s)\in \R^2$,\label{qeps_in_(a,b)}
    \item $q_\eps(t,s)-a\leq Ce^{s/C}$ for every $(t,s)\in \R^2$, \label{q-a}
    \item $b-q_\eps(t,s)\leq Ce^{-s/C}$ for every $(t,s)\in \R^2$, \label{b-q}
    \item $0<\partial_s q_\eps(t,s)\leq Ce^{-|s|/C}$ for every $(t,s)\in \R^2$, \label{q'<exp}
    \item $\abs{\partial_s^2 q_\eps(t,s)}\leq Ce^{-|s|/C}$ for every $(t,s)\in \R^2$. \label{q''<exp}
\end{enumerate}    
Moreover, as $\eps \to 0^+$,
\begin{enumerate}[label=(B\arabic*)]
    \item $q_\eps(t,s) \to q_0(s)$ uniformly on $\R^2$,\label{qeps-q0}
    \item $\partial_s q_\eps(t,s) \to \dot{q}_0(s)$ uniformly on $\R^2$, \label{qeps'-q0'}
    \item $\partial_t q_\eps(t,s)\to 0$ uniformly on $\R^2$,\label{dtqeps-0}
    \item $\partial_t ^2 q_\eps(t,s)\to 0$ uniformly on $\R^2$.\label{dttqeps-0}
\end{enumerate}
\end{lemma}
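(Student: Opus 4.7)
The plan is to split the analysis at $|s|=R$, exploiting the fact that $\eta$ vanishes outside $[-R,R]^2$. Inside the strip $\{|s|\le R\}$, combining Lemma~\ref{lemma:I=R2} with assumption (W2) and compactness yields a constant $\delta>0$ such that $W(q_\eps(t,s))-\eps\eta(t,s)\ge\delta$ there for every sufficiently small $\eps$; hence the right-hand side of (\ref{def:q_eps}) is a $C^2$ function of its arguments, and classical smooth parameter-dependence theory for ODEs gives $q_\eps\in C^2(\R\times[-R,R])$ with bounded derivatives. Outside the strip, Remark~\ref{remark:transaltions} provides the explicit representation
\[
    q_\eps(t,s) = q_0\bigl(s+\tau^{\pm}_\eps(t)\bigr), \qquad \pm s\ge R,
\]
where the shifts $\tau^{\pm}_\eps(t):=q_0^{-1}(q_\eps(t,\pm R))\mp R$ are uniformly bounded in $(t,\eps)$ because $q_\eps(t,\pm R)\in[q_0(-2R),q_0(2R)]\subset(a,b)$. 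Consequently, the exponential bounds \ref{q-a}--\ref{q''<exp} on the tails are inherited directly from Lemma~\ref{lemma:q0}, while on $[-R,R]$ everything is uniformly bounded, which together with the smoothness yields \ref{qeps_in_(a,b)}--\ref{q''<exp}.

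For the convergences \ref{qeps-q0}--\ref{qeps'-q0'}, continuous dependence on parameters applied on the compact slab $\{|s|\le R\}$ gives $q_\eps\to q_0$ and $\partial_s q_\eps\to\dot{q}_0$ uniformly in $(t,s)$ there; in particular $q_\eps(t,\pm R)\to q_0(\pm R)$ uniformly in $t$, so $\tau^{\pm}_\eps(t)\to 0$ uniformly. Uniform convergence on the tails then follows from the tail representation, the mean value theorem, and the bounds on $\dot{q}_0$ and $\ddot{q}_0$ provided by Lemma~\ref{lemma:q0}.

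The main obstacle is proving \ref{dtqeps-0}--\ref{dttqeps-0}, because $\partial_t\eta$ is not itself infinitesimal. Differentiating (\ref{def:q_eps}) in $t$ yields a linear ODE in $s$ for $p_\eps:=\partial_t q_\eps$, namely
\[
    \partial_s p_\eps = \frac{W'(q_\eps)}{2\,\partial_s q_\eps}\,p_\eps - \frac{\eps\,\partial_t\eta}{2\,\partial_s q_\eps}, \qquad p_\eps(t,0)=0,
\]
whose solution by variation of constants has a forcing of order $\eps$ multiplied by the exponential kernel
\[
    K_\eps(t;s,\sigma) := \exp\left(\int_\sigma^s \frac{W'(q_\eps(t,s'))}{2\,\partial_s q_\eps(t,s')}\,ds'\right).
\]
The crucial point is the uniform control of $K_\eps$ for $\sigma\in[-R,R]$ and $s\in\R$: on the interior $\{|s'|\le R\}$ the integrand is uniformly bounded, while on a tail $\{\pm s'\ge R\}$ one has $q_\eps(t,s')=q_0(s'+\tau^{\pm}_\eps(t))$, and the integrand equals $\frac{d}{ds'}\log\dot{q}_0(s'+\tau^{\pm}_\eps(t))$, so the corresponding portion of the integral collapses to $\log\bigl[\dot{q}_0(s+\tau^{\pm}_\eps)/\dot{q}_0(\pm R+\tau^{\pm}_\eps)\bigr]$, which is uniformly bounded (and decaying as $|s|\to\infty$) thanks to Lemma~\ref{lemma:q0}(v) and the bound on $\tau^{\pm}_\eps$. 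This yields \ref{dtqeps-0}. Property \ref{dttqeps-0} follows by differentiating the linear equation once more in $t$: the equation for $\partial_t^2 q_\eps$ has the same kernel $K_\eps$ and a forcing which is $O(\eps)$ up to contributions proportional to $p_\eps$, already known to be $O(\eps)$, so the same variation-of-constants argument gives $\partial_t^2 q_\eps = O(\eps)$ uniformly on $\R^2$.
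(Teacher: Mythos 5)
Your proof is correct and shares the paper's overall architecture: the same splitting at $|s|=R$, smooth parameter dependence on the compact strip, and the translation representation $q_\eps(t,s)=q_0(s+\tau_\eps^{\pm}(t))$ on the tails, from which \ref{q-a}--\ref{q''<exp} and \ref{qeps-q0}--\ref{qeps'-q0'} follow as you describe. Where you genuinely diverge is in the treatment of \ref{dtqeps-0}--\ref{dttqeps-0}. The paper argues softly: by $C^2$ dependence on $(\eps,t,s)$ and the fact that $Q(0,t,s)=q_0(s)$ is $t$-independent, $\partial_t q_\eps$ and $\partial_t^2 q_\eps$ converge to $0$ uniformly on the compact strip; then it differentiates the explicit tail representation in $t$, obtaining formulas in which every term carries a factor $\partial_t q_\eps(t,R)$ or $\partial_t^2 q_\eps(t,R)$, both infinitesimal. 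You instead linearize the ODE in $t$ and run a Duhamel estimate with the kernel $K_\eps$; your observation that on the tails the integrand is $\frac{d}{ds'}\log\dot q_0(s'+\tau_\eps^{\pm})$ is exactly what makes the kernel bounded above, and since the forcing is $O(\eps)$ and supported in $|\sigma|\le R$ this gives the quantitative bound $\partial_t q_\eps=O(\eps)$, which is stronger than the paper's qualitative statement. One caveat: your one-line justification of \ref{dttqeps-0} understates the work needed. Differentiating the linear equation once more produces forcing terms involving $p_\eps^2$ and $\partial_s p_\eps$ multiplied by coefficients such as $W'(q_\eps)/(\partial_s q_\eps)^2$, which blow up like $1/\dot q_0$ on the tails; the estimate closes only because $p_\eps$ itself decays like $\dot q_0$ there (equivalently, because on the tails $p_\eps(t,s)=\dot q_0(s+\tau_\eps^{\pm})\,p_\eps(t,\pm R)/\dot q_0(\pm R+\tau_\eps^{\pm})$, which is precisely the paper's formula). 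So the step is salvageable, but it deserves the same explicit tail computation you performed for the kernel rather than the assertion that the extra forcing is merely ``proportional to $p_\eps$''.
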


\begin{proof}
By the smooth dependence of solutions of ordinary differential equations on parameters (see, for example, \cite{Hartman}, Theorem 4.1 and Corollary 4.1 on pages 100-101) we know that the function $Q(\eps,t,s):=q_\eps(t,s)$ belongs to $C^2([0,\eps_0)\times \R^2)$.

Let $R>0$ be as in Lemma \ref{lemma:I=R2}. Since $Q(0,t,s)=q_0(s)$ for every $(t,s)\in \R^2$, from the regularity of $Q$ we deduce that the convergences in \ref{qeps-q0}-\ref{dttqeps-0} hold uniformly on compact subsets of $\R^2$ and hence, by the properties of $q_0$ listed in Lemma \ref{lemma:q0}, there exist two positive real numbers $\eps_R\in (0,\eps_0]$ and $C_R>0$ such that for every $\eps\in [0,\eps_R)$ all the properties \ref{qeps_in_(a,b)}-\ref{q''<exp} hold with $C=C_R$ if the domain $\R^2$ is replaced by $[-R,R]^2$.

Outside $[-R,R]^2$ the function $\eta$ is identically zero, so $q_\eps$ solves the equation
$$\partial_s q_\eps(t,s)=\sqrt{W(q_\eps(t,s))}\quad \forall (t,s) \in [-R,R] \times [R,+\infty),$$
with initial datum $q_\eps(t,R)\in (a,b)$, at least for $\eps\in [0,\eps_R)$. Therefore, from Remark \ref{remark:transaltions} we deduce that
$$q_\eps(t,s)=q_0\left(s-R+q_0^{-1}(q_\eps(t,R))\right) \qquad \forall (\eps,t,s) \in [0,\eps_R] \times [-R,R] \times [R,+\infty).$$

Differentiating the previous identity, and exploiting the equation for $q_0$, we obtain
$$\partial_s q_\eps (t,s)= \dot{q}_0\left(s-R+q_0^{-1}(q_\eps(t,R))\right),\quad \partial_s ^2 q_\eps (t,s)= \ddot{q}_0\left(s-R+q_0^{-1}(q_\eps(t,R))\right),$$

$$\partial_t q_\eps (t,s)= \dot{q}_0\left(s-R+q_0^{-1}(q_\eps(t,R))\right)\frac{\partial_t q_\eps(t,R)}{\sqrt{W(q_\eps(t,R))}},$$
\begin{multline*}
\partial_t ^2 q_\eps (t,s)= \ddot{q}_0\left(s-R+q_0^{-1}(q_\eps(t,R))\right) \frac{(\partial_t q_\eps(t,R))^2}{W(q_\eps(t,R))} \\[1ex]
+ \frac{\dot{q}_0\left(s-R+q_0^{-1}(q_\eps(t,R))\right)}{\sqrt{W(q_\eps(t,R))}} \left( \partial_t ^2 q_\eps(t,R) - \frac{(\partial_t q_\eps(t,R))^2}{2 W(q_\eps(t,R))} \right),
\end{multline*}
for every $(t,s) \in \R \times [R,+\infty)$.

By the regularity of $Q$, we know that $q_\eps(\cdot,R)$ converges to the constant $q_0(R)$ in $C^2([-R,R])$, and therefore its first and second derivatives converge uniformly to 0 in $[-R,R]$.

Thus, from the previous identities and Lemma \ref{lemma:q0} we deduce that \ref{qeps-q0}-\ref{dttqeps-0} are uniform also in the set $[-R,R]\times [R,+\infty)$, and also \ref{qeps_in_(a,b)}-\ref{q''<exp} hold for every $(t,s)\in [-R,R]\times [R,+\infty)$ if $\eps$ is small enough, possibly changing the values of the constant $C$.

The extension to $[-R,R]\times (-\infty,-R]$ is analogous, hence we have proved all the desired properties in the set $[-R,R]\times \R$, with positive constants $\eps_1$ and $C$, possibly different from $\eps_R$ and $C_R$.

Finally, we observe that $q_\eps(t,s)=q_0(s)$ when $t \notin [-R,R]$, because in this case $\eta(t,s)=0$ for every $s \in \R$ and the equation for $q_\eps$ reduces to the equation for $q_0$. This means that outside $[-R,R]\times \R$ all the properties simply follows from Lemma \ref{lemma:q0}, so the proof is complete.
\end{proof}

We observe that from (\ref{def:q_eps}) we obtain the identity
\begin{equation}\label{ODE^2}
(\partial_s q_\eps (t,s))^2 = W(q_\eps(t,s)) - \eps \eta(t,s)\qquad \forall (\eps,t,s)\in [0,\eps_1)\times \R^2,
\end{equation}
hence differentiating once more with respect to $s$,
\begin{equation}\label{q_eps''}
2\partial_s ^2 q_\eps (t,s) = W'(q_\eps(t,s)) - \eps \frac{\partial_s \eta(t,s)}{\partial_s q_\eps (t,s)} \qquad \forall (\eps,t,s)\in [0,\eps_1)\times \R^2,
\end{equation}
where the denominator never vanishes because of property \ref{q'<exp} in Lemma \ref{lemma_ODE}.

\subsection{Smooth hypersurfaces and the signed distance}

Now we introduce some more notation and we recall some well-known facts about the signed distance function from a smooth hypersurface in $\R^n$.

To this end, let $E\subset \R^n$ be an open set such that its boundary $\Sigma:=\partial E \subset \R^n$ is a closed hypersurface of class $C^\infty$. Let us denote with $\nu_\Sigma \colon \Sigma\to S^{n-1}$ the inner unit normal to $\Sigma$ and with $\dsig \colon \R^n \to \R$ the signed distance function from $\Sigma$, positive inside $E$, that is defined as follows:
\[
    \dsig(x):=\mathrm{dist}(x,\R^n\setminus E)-\mathrm{dist}(x,E)
    =
    \begin{cases}
        \mathrm{dist}(x,\Sigma) & \text{if $x \in E$}, \\
        -\mathrm{dist}(x,\Sigma) & \text{if $x \notin E$}.\\
    \end{cases}
\]

Finally, we denote with $H_\Sigma \colon \Sigma \to \R$ the scalar mean curvature of $\Sigma$, that is $H_\Sigma:=-\div_\Sigma \nu_\Sigma$, where $\div_\Sigma$ denotes the tangential divergence on $\Sigma$.

In the following lemma we state some well-known properties of the signed distance function from a smooth  hypersurface (see for instance \cite{Ambrosio} and Lemma 3 in \cite{Modica}).
\begin{lemma}[Properties of the signed distance in a tubular neighborhood]\label{lemma:dist}
Let $E$, $\Sigma$, $\nu_\Sigma$, $\dsig$ and $H_\Sigma$ be as above. Then there exists a positive real number $\delta>0$ such that the restriction of $\dsig$ to the set $U_\delta(\Sigma):=\{x\in \R^n : |\dsig(x)|\leq \delta\}$ is of class $C^\infty$. Moreover, for every $x\in U_\delta(\Sigma)$ there exists a unique point $\pi_\Sigma(x)\in \Sigma$ such that
\[
    |x-\pi_\Sigma(x)|=\min \{|x-y|:y \in \Sigma\},
\]
and $\pi_\Sigma \colon U_\delta(\Sigma)\to \Sigma$ is smooth.

With these notations, it turns out that
\begin{equation}\label{grad_dist}
\nabla \dsig (x)=\nu_\Sigma(\pi_\Sigma(x))\qquad\forall x\in U_\delta(\Sigma),\end{equation}
\begin{equation}\label{laplacian_dist}
-\Delta \dsig (x)=\sum_{i=1} ^{n-1} \frac{k_i(\pi_\Sigma(x))}{1-\dsig(x)k_i(\pi_\Sigma(x))} \qquad\forall x\in U_\delta(\Sigma),\end{equation}
where $\{k_i(y)\}$ are the principal curvatures of $\Sigma$ at the point $y\in\Sigma$, computed as usual with respect to the outer normal $-\nu_\Sigma(y)$.

In particular,
\begin{equation}\label{nabla_d=1}
|\nabla \dsig (x)|=1 \qquad \forall x \in U_\delta(\Sigma),
\end{equation}
\begin{equation}\label{Delta_d=H}
-\Delta \dsig (y)= H_\Sigma (y) \qquad \forall y \in \Sigma.
\end{equation}

Finally,
\begin{equation}\label{lim_meas_d=t}
\lim_{t\to 0} \mathcal{H}^{n-1}(\{\dsig =t\}) = \mathcal{H}^{n-1}(\Sigma),
\end{equation}
and therefore (by the coarea formula, (\ref{nabla_d=1}) and (\ref{lim_meas_d=t}))
\begin{equation}\label{meas_r}
\lim_{r\to 0^+} \frac{\Leb^n(\{|\dsig(x)|\leq r\})}{2r}=\mathcal{H}^{n-1}(\Sigma).
\end{equation}

\end{lemma}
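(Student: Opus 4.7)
The plan hinges on the tubular neighborhood theorem. I would introduce the map $\Phi \colon \Sigma \times \mathbb{R} \to \mathbb{R}^n$ defined by $\Phi(y,t) := y + t\,\nu_\Sigma(y)$, whose differential at $(y,0)$ sends $(v,\tau)\in T_y\Sigma \times \mathbb{R}$ to $v + \tau\,\nu_\Sigma(y)$ and is therefore an isomorphism onto $\mathbb{R}^n$. By the inverse function theorem, $\Phi$ is a local $C^\infty$ diffeomorphism at each $(y,0)$; using a uniform local injectivity argument (compactness of $\Sigma$, or a bounded-reach estimate in the non-compact case), I would produce $\delta>0$ such that $\Phi\colon \Sigma\times(-\delta,\delta)\to U_\delta(\Sigma)$ is a smooth diffeomorphism onto its image, which coincides with $U_\delta(\Sigma)$ because every point in $U_\delta(\Sigma)$ realizes the distance through a unique transverse normal segment. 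The inverse $\Phi^{-1}(x)=(\pi_\Sigma(x),d_\Sigma(x))$ then furnishes existence, uniqueness, and smoothness of the projection $\pi_\Sigma$ and of $d_\Sigma|_{U_\delta(\Sigma)}$.

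Next I would derive the pointwise formulas for $\nabla d_\Sigma$ and $\Delta d_\Sigma$. Differentiating the identity $d_\Sigma(\Phi(y,t))=t$ with respect to $t$ yields $\nabla d_\Sigma\cdot \nu_\Sigma(y)=1$, while differentiation in a tangent direction $v\in T_y\Sigma$, combined with $\partial_y\Phi(y,t)\,v = (I-tW_y)v \in T_y\Sigma$ (where $W_y$ is the Weingarten map of $\Sigma$ at $y$), gives $\nabla d_\Sigma\cdot (I-tW_y)v = 0$. Since $I-tW_y$ is invertible for $|t|$ small, $\nabla d_\Sigma$ must be orthogonal to $T_y\Sigma$, so $\nabla d_\Sigma(x)=\nu_\Sigma(\pi_\Sigma(x))$ and $|\nabla d_\Sigma|\equiv 1$ on $U_\delta(\Sigma)$. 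For the Laplacian I would compute $\Delta d_\Sigma = \operatorname{div}(\nu_\Sigma\circ\pi_\Sigma)$ in a local orthonormal frame at $y$ diagonalizing $W_y$ with eigenvalues $k_i(y)$: in such coordinates the tangent block of $D\Phi^{-1}$ scales by $1/(1-tk_i)$, producing the displayed formula with denominators $1-d_\Sigma\,k_i$. Restricting to $\Sigma$ (where $d_\Sigma=0$) recovers $-\Delta d_\Sigma = \sum_i k_i = H_\Sigma$.

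For the measure statements I would apply the coarea formula using $|\nabla d_\Sigma|\equiv 1$, so that $\mathcal{L}^n(\{|d_\Sigma|\le r\}) = \int_{-r}^{r} \mathcal{H}^{n-1}(\{d_\Sigma=t\})\,dt$ for $r<\delta$. The parametrization $\Phi(\cdot,t)\colon \Sigma\to\{d_\Sigma=t\}$ is a diffeomorphism whose tangential Jacobian equals $\prod_{i=1}^{n-1}(1-tk_i(y))$, which converges to $1$ uniformly on relatively compact portions of $\Sigma$ as $t\to 0$. Hence $\mathcal{H}^{n-1}(\{d_\Sigma=t\})\to \mathcal{H}^{n-1}(\Sigma)$, and dividing the coarea identity by $2r$ gives the density limit. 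The main technical obstacle I anticipate is securing a uniform $\delta$ and uniform bounds on the principal curvatures when $\Sigma$ is non-compact: one must either localize the Jacobian argument to compact pieces and pass to the limit via monotone/dominated convergence, or work under the standing assumption (used elsewhere in the paper) that the relevant portion of $\Sigma$ sits inside a bounded region, which reduces everything to the compact case.
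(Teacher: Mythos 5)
Your proposal is correct, and in fact the paper does not prove this lemma at all: it is stated as a collection of well-known facts with pointers to the literature (\cite{Ambrosio} and Lemma~3 of \cite{Modica}), and the tubular-neighborhood argument you outline --- inverting $\Phi(y,t)=y+t\nu_\Sigma(y)$ to get smoothness of $(\pi_\Sigma,\dsig)$, differentiating $\dsig(\Phi(y,t))=t$ for the gradient and Laplacian formulas, and combining the coarea formula with the Jacobian $\prod_i(1-tk_i)$ for the measure statements --- is exactly the standard proof found in those references. Your only anticipated obstacle, non-compactness of $\Sigma$, does not arise here: the standing hypothesis is that $\Sigma$ is a \emph{closed} (hence compact) hypersurface of class $C^\infty$, which gives the uniform $\delta$, the uniform curvature bounds, and the uniform convergence of the Jacobians directly; the only remaining point deserving care is the sign convention relating the Weingarten map of $\nu_\Sigma$ to the principal curvatures $k_i$, which the paper fixes by computing them with respect to the outer normal $-\nu_\Sigma$.
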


\section{Proof of Theorem \ref{teo:gammalim}}

This section is entirely devoted to the proof of Theorem \ref{teo:gammalim}.

First of all, we observe that by a standard density argument, we can assume that $\Sigma:=\partial E$ is a closed hypersurface of class $C^\infty$. Indeed, if $E$ is a set of finite perimeter in $\R^n$, then either $E$ or its complement has finite measure, thus it can be approximated (in the $L^1$ topology) by bounded smooth sets in such a way that the perimeter of the approximating sets converges to the perimeter of the initial one.

For every positive real number $\eps>0$ let us fix a function $\vartheta_\eps \in C^\infty(\R; [0,1])$ with the following properties:
\[
    \vartheta_\eps(s)=
    \begin{cases}
    0 & \text{if $s \in \left[-\frac{1}{\sqrt{\eps}},\frac{1}{\sqrt{\eps}}\right] $},\\[1.5ex]
    1 & \text{if $s \in \left(-\infty,-\frac{1}{\sqrt{\eps}}-1\right] \cup \left[\frac{1}{\sqrt{\eps}}+1,+\infty\right)$},
\end{cases}
\]
    \begin{equation}
        \label{theta'<2}
        |\dot{\vartheta}_\eps (s)| \leq 2 \quad \text{and} \quad |\ddot{\vartheta}_\eps (s)| \leq 8 \quad \forall s \in \R.
    \end{equation}

Let $U_\delta(\Sigma)$ and $\pi_\Sigma$ be as in Lemma \ref{lemma:dist} and consider the map $\Psi_\eps \colon U_\delta(\Sigma)\to \R \times [-\delta/\eps,\delta/\eps]$ defined as
    \[
        \Psi_\eps(x)=\left(h(x), \frac{\dsig(x)}{\eps}\right),
    \]
where
    \[
        h(x):=H_\Sigma(\pi_\Sigma(x))
    \]
and $H_\Sigma (y)\in \R$ is the mean curvature of $\Sigma$ at the point $y\in \Sigma$.

We observe that $h \colon U_\delta(\Sigma) \to \R$ is smooth and it is constant in the direction normal to $\Sigma$. Hence all the derivatives of $h$ are uniformly bounded in $U_\delta(\Sigma)$ and
    \begin{equation}
        \label{nablah_perp_nablad}
        \nabla h(x) \cdot \nabla \dsig(x) =0 \qquad \forall x \in U_\delta(\Sigma).    
    \end{equation}

We point out that if $\Sigma$ is a sphere then $h$ is constant, so only the second component of the function $\Psi_\eps$ varies.

We set also
\begin{equation}\label{def:A_eps}
A_\eps:=\left\{x \in \R^n : |\dsig(x)| < \sqrt{\eps} \right\}, \ \ B_\eps:=\left\{x \in \R^n : \sqrt{\eps}\leq |\dsig(x)| \leq \sqrt{\eps}+\eps \right\},    
\end{equation}
and we notice that (\ref{meas_r}) implies that
\begin{equation}\label{meas_Aeps_Beps}
\lim_{\eps \to 0^+} \Leb^n(A_\eps)=\lim_{\eps \to 0^+} \Leb^n(B_\eps)=0.
\end{equation}

Let us fix $\eta \in C^2 _c (\R^2)$ and consider the functions $q_\eps$ as in (\ref{def:q_eps}). Let $\eps_1>0$ be as in Lemma~\ref{lemma_ODE} and for every $\eps \in (0,\eps_1)$ such that
\begin{equation}\label{eps<<delta}
\sqrt{\eps}+\eps\in (0,\delta),\end{equation}
let us define the function $u_\eps:\R^n \to \R$ as follows:
    \[
        u_\eps(x):= q_\eps(\Psi_\eps(x))\left(1-\vartheta_\eps\left(\frac{\dsig(x)}{\eps}\right)\right) + \chi_E ^{a,b}(x) \vartheta_\eps\left(\frac{\dsig(x)}{\eps}\right).
    \]

We observe that, despite $\Psi_\eps$ is defined only in $U_\delta (\Sigma)$, the function $u_\eps$ is well-defined on $\R^n$ and of class $C^2$. Indeed, we have $\vartheta_\eps(\dsig(x)/\eps)\neq 1$ only if $x \in A_\eps \cup B_\eps$, and (\ref{eps<<delta}) ensures that $A_\eps \cup B_\eps\subseteq U_\delta(\Sigma)$, so $\Psi_\eps(x)$ is well-defined whenever $1-\vartheta(\dsig(x)/\eps)\neq 0$. Moreover, $u_\eps \in C^2(\R^n)$ because $q_\eps \in C^2(\R^2)$, the functions $\Psi_\eps$ and $\vartheta_\eps(\dsig/\eps)$ are smooth and $\vartheta_\eps(\dsig/\eps)=0$ in $A_\eps$, while $\chi_E ^{a,b}$ is smooth outside $A_\eps$.

We observe also that $u_\eps(x)=\chi^{a,b}_E (x)$ for every $x\notin A_\eps\cup B_\eps$. Moreover from property \ref{qeps_in_(a,b)} in Lemma \ref{lemma_ODE} we have that $u_\eps(x) \in (a,b)$ for every $x \in A_\eps \cup B_\eps$. This implies that
$$\lim_{\eps \to 0^+}\left\|u_\eps - \chi^{a,b}_E\right\|_{L^1(\R^n)}\leq \lim_{\eps \to 0^+} 2(b-a)(\Leb^n(A_\eps) +\Leb^n(B_\eps))=0.$$

Now we compute the gradient of $u_\eps$ and we find
\begin{align*}
    \nabla u_\eps(x)  = & 
    \left[  \partial_t q_\eps(\Psi_\eps(x)) \nabla h(x) + \partial_s q_\eps(\Psi_\eps(x)) \frac{\nabla \dsig(x)}{\eps} \right]\left(1-\vartheta_\eps\left(\frac{\dsig(x)}{\eps}\right)\right) \\
    & + 
    \left[\chi_E ^{a,b}(x)-q_\eps(\Psi_\eps(x))\right] \dot{\vartheta}_\eps \left(\frac{\dsig(x)}{\eps}\right)\frac{\nabla \dsig (x)}{\eps}.
\end{align*}

In order to compute the Laplacian of $u_\eps$, let us set
$$V_\eps(x):= \div \big(\partial_t q_\eps(\Psi_\eps(x)) \nabla h(x)\big) =\partial_t ^2 q_\eps(\Psi_\eps(x)) |\nabla h(x)|^2 + \partial_t q_\eps(\Psi_\eps(x)) \Delta h(x),$$
where the second identity follows from (\ref{nablah_perp_nablad}).

Therefore, recalling (\ref{nabla_d=1}) and exploiting again (\ref{nablah_perp_nablad}), we have
\begin{align}
    \Delta u_\eps(x) = &
    \left[V_\eps+ \partial_s^2 q_\eps(\Psi_\eps) \frac{1}{\eps^2} +\partial_s q_\eps(\Psi_\eps) \frac{\Delta \dsig}{\eps} \right] \left(1-\vartheta_\eps \left(\frac{\dsig}{\eps}\right)\right)\nonumber \\
    & -
    2\dot{\vartheta}_\eps \left(\frac{\dsig}{\eps}\right) \partial_s q_\eps(\Psi_\eps) \frac{1}{\eps^2} \nonumber\\
    & +
    \left[\chi_E ^{a,b}-q_\eps(\Psi_\eps)\right]\left[\ddot{\vartheta}_\eps \left(\frac{\dsig}{\eps}\right)\frac{1}{\eps^2}+\dot{\vartheta}_\eps \left(\frac{\dsig}{\eps}\right)\frac{\Delta \dsig}{\eps}\right],\label{Deltaueps}
\end{align}
where every term in the right-hand side is computed at $x$.

Now we compute the functionals, starting with $E_\eps$. First of all, recalling the definitions of $A_\eps$ and $B_\eps$ in (\ref{def:A_eps}), we observe that
$$E_\eps(u_\eps,\R^n)=E_\eps(u_\eps,A_\eps)+E_\eps(u_\eps,B_\eps),$$
because $u_\eps=\chi^{a,b}_E$ elsewhere.

We claim that
\begin{equation}\label{energy_near_Sigma}
\lim_{\eps\to 0^+} E_\eps(u_\eps,A_\eps)=\sigma_W ^{a,b} \mathcal{H}^{n-1}(\Sigma),
\end{equation}
and
\begin{equation}\label{energy_far}
\lim_{\eps\to 0^+} E_\eps(u_\eps,B_\eps)=0,
\end{equation}
independently of the choice of $\eta$.

\smallskip

\textsc{Proof of (\ref{energy_near_Sigma})}: We observe that on $A_\eps$
\begin{align}
 \eps |\nabla u_\eps|^2 + \frac{W(u_\eps)}{\eps}&= \eps\left|  \partial_t q_\eps(\Psi_\eps) \nabla h+ \partial_s q_\eps(\Psi_\eps) \frac{\nabla \dsig}{\eps} \right|^2 +\frac{W(q_\eps(\Psi_\eps))}{\eps}\nonumber\\
&=\eps (\partial_t q_\eps(\Psi_\eps))^2 |\nabla h|^2 +\frac{1}{\eps}\left[(\partial_s q_\eps(\Psi_\eps))^2 +W(q_\eps(\Psi_\eps))\right],\label{energy_density_near_Sigma}
\end{align}
because the double product in the square vanishes thanks to (\ref{nablah_perp_nablad}).

From \ref{dtqeps-0} in Lemma \ref{lemma_ODE} and (\ref{meas_Aeps_Beps}) we deduce that the integral on $A_\eps$ of the first addendum in the right-hand side of (\ref{energy_density_near_Sigma}) goes to zero, and hence
\begin{align*}
\lim_{\eps\to 0^+} E_\eps(u_\eps,A_\eps)=\lim_{\eps\to 0^+} \int_{A_\eps} \frac{(\partial_s q_\eps(\Psi_\eps))^2 +W(q_\eps(\Psi_\eps))}{\eps}\, dx.
\end{align*}

Now we compute the integral in the right-hand side exploiting the coarea formula and (\ref{nabla_d=1}), so we obtain
\begin{align*}
&\int_{A_\eps}\frac{(\partial_s q_\eps(\Psi_\eps))^2 +W(q_\eps(\Psi_\eps))}{\eps}\, dx\\
=&\int_{-\frac{1}{\sqrt{\eps}}} ^{\frac{1}{\sqrt{\eps}}} ds \int_{\{\dsig=\eps s\}} \left[(\partial_s q_\eps (h(y),s))^2 + W(q_\eps(h(y),s))\right] d\mathcal{H}^{n-1}(y).
\end{align*}

By (\ref{lim_meas_d=t}) and properties \ref{q-a}, \ref{b-q}, \ref{q'<exp}, \ref{qeps-q0} and \ref{qeps'-q0'} in Lemma \ref{lemma_ODE} we can pass to the limit and obtain
$$ \lim_{\eps\to 0^+} E_\eps(u_\eps, A_\eps)= \mathcal{H}^{n-1}(\Sigma)\int_{-\infty} ^{+\infty} \left[\dot{q}_0(s)^2 + W(q_0(s))\right] ds= \sigma_W ^{a,b} \mathcal{H}^{n-1}(\Sigma),$$
that is exactly (\ref{energy_near_Sigma}).

\smallskip

\textsc{Proof of (\ref{energy_far})}: We observe that on $B_\eps$
\[
    \eps |\nabla u_\eps|^2 + \frac{W(u_\eps)}{\eps} 
    \leq   
    2\eps (\partial_t q_\eps(\Psi_\eps))^2 |\nabla h|^2 +\frac{2}{\eps}(\partial_s q_\eps(\Psi_\eps))^2  + 
    2\left[\chi_E ^{a,b}-q_\eps(\Psi_\eps)\right]^2 \frac{4}{\eps} +\frac{W(u_\eps)}{\eps},
\]
where we exploited again (\ref{nabla_d=1}), (\ref{theta'<2}) and (\ref{nablah_perp_nablad}), as well as the inequality $(\alpha+\beta)^2 \leq 2\alpha^2 + 2\beta^2$.

As before, the first term is uniformly bounded (actually vanishing as $\eps \to 0^+$) and hence its integral on $B_\eps$ goes to zero.
For the term with $W$ we observe that, if $x \in B_\eps$, then
\begin{equation}\label{eq:u-chi<q-chi}
|u_\eps(x)-\chi_E ^{a,b}(x)|\leq |q_\eps(\Psi_\eps(x))-\chi_E ^{a,b}(x)|,
\end{equation}
hence from the Lipschitz continuity of $W$ in $[a,b]$, properties \ref{q-a} and \ref{b-q} in Lemma \ref{lemma_ODE} and the fact that $|\dsig(x)|/\eps\geq 1/\sqrt{\eps}$ for every $x \in B_\eps$ we deduce that in $B_\eps$ we have
\[
    \frac{W(u_\eps)}{\eps}\leq \Lip(W,[a,b]) \cdot \frac{|q_\eps(\Psi_\eps)-\chi_E ^{a,b}|}{\eps} \leq \Lip(W,[a,b]) \cdot \frac{Ce^{-1/(C\sqrt{\eps}) }}{\eps}.
\]

In a similar way, exploiting again Lemma \ref{lemma_ODE}, we obtain
$$\frac{2}{\eps}(\partial_s q_\eps(\Psi_\eps))^2\leq \frac{2C^2e^{-2/(C\sqrt{\eps})}}{\eps},$$
and
$$\frac{8}{\eps}\left[\chi_E ^{a,b}-q_\eps(\Psi_\eps)\right]^2 \leq \frac{8C^2e^{-2/(C\sqrt{\eps})}}{\eps}.$$

Therefore the integrand is infinitesimal in the set $B_\eps$, and this is enough to establish (\ref{energy_far}).

\smallskip

Now we focus on $G_\eps(u_\eps,\R^n)$ and, as before, we observe that
$$G_\eps(u_\eps,\R^n)=G_\eps(u_\eps,A_\eps)+G_\eps(u_\eps,B_\eps).$$

We claim that
\begin{equation}\label{grad_near_Sigma}
\limsup_{\eps\to 0^+} G_\eps(u_\eps,A_\eps)
\leq 4\int_{-\infty} ^{+\infty} ds \int_{\Sigma} \left[\partial_s \eta(H(y),s)+2\dot{q}_0(s)^2H(y) \right]^2 d\mathcal{H}^{n-1}(y),
\end{equation}
and
\begin{equation}\label{grad_far}
\lim_{\eps\to 0^+} G_\eps(u_\eps,B_\eps)= 0.
\end{equation}

\smallskip

\textsc{Proof of (\ref{grad_near_Sigma})}:
In the set $A_{\eps}$, recalling~(\ref{Deltaueps}) and exploiting again the elementary inequality $(\alpha+\beta)^2 \leq 2\alpha^2 +2\beta^2$, we obtain
\begin{align*}
    \left(2\eps\Delta u_\eps -\frac{W'(u_\eps)}{\eps}\right)^2
    &
    \leq 2\left[2\eps V_\eps \right]^2 + 2 \left[\frac{2\partial_s^2 q_\eps(\Psi_\eps)-W'(q_\eps(\Psi_\eps))}{\eps} +2\partial_s q_\eps(\Psi_\eps)\Delta \dsig \right]^2 \\
    &
    = 8\eps^2 V_\eps^2 +2\left[2\partial_s q_\eps(\Psi_\eps)\Delta \dsig - \frac{\partial_s \eta(\Psi_\eps)}{\partial_s q_\eps (\Psi_\eps)}\right]^2,
\end{align*}
where the equality follows from (\ref{q_eps''}).

Since $V_\eps$ is bounded because of the smoothness of $h$ and properties \ref{dtqeps-0} and \ref{dttqeps-0} in Lemma~\ref{lemma_ODE}, from (\ref{energy_near_Sigma}) we deduce
$$\lim_{\eps\to 0^+} \int_{A_\eps} 8\eps^2 V_\eps^2\left(\eps|\nabla u_\eps|^2+\frac{W(u_\eps)}{\eps}\right) dx =0.$$

Moreover, from (\ref{meas_Aeps_Beps}) we get
$$\lim_{\eps\to 0^+} \int_{A_\eps} \eps (\partial_t q_\eps(\Psi_\eps))^2 |\nabla h|^2 \left[2\partial_s q_\eps(\Psi_\eps)\Delta \dsig - \frac{\partial_s \eta(\Psi_\eps)}{\partial_s q_\eps (\Psi_\eps)}\right]^2=0,$$
because every function in the integral is uniformly bounded as $\eps\to 0^+$ thanks to Lemma \ref{lemma_ODE}. We point out that also the last term is bounded, because $\dot{q}_0(s)$ is larger than a positive constant in the support of $\eta$ and $\partial_s q_\eps\to \dot{q}_0$ uniformly by \ref{qeps'-q0'} in Lemma \ref{lemma_ODE}.

Therefore, combining the previous limits with (\ref{energy_density_near_Sigma}), we obtain
\begin{align*}
    \limsup_{\eps\to 0^+} G_\eps(u_\eps,A_\eps)
    &\leq 
    \limsup_{\eps\to 0^+} \int_{A_\eps} 2\left[2\partial_s q_\eps(\Psi_\eps)\Delta \dsig - \frac{\partial_s \eta(\Psi_\eps)}{\partial_s q_\eps (\Psi_\eps)}\right]^2 \cdot \frac{(\partial_s q_\eps(\Psi_\eps))^2 +W(q_\eps(\Psi_\eps))}{\eps}\,dx \\
    &= 
    \limsup_{\eps\to 0^+} \left\{ \int_{A_\eps} 2\left[2\partial_s q_\eps(\Psi_\eps)\Delta \dsig - \frac{\partial_s \eta(\Psi_\eps)}{\partial_s q_\eps (\Psi_\eps)}\right]^2 \frac{2(\partial_s q_\eps(\Psi_\eps))^2}{\eps}\,dx\right.\\
    & \qquad \qquad \left.+\int_{A_\eps} 2\left[2\partial_s q_\eps(\Psi_\eps)\Delta \dsig - \frac{\partial_s \eta(\Psi_\eps)}{\partial_s q_\eps (\Psi_\eps)}\right]^2 \eta(\Psi_\eps)\,dx\right\},
\end{align*}
where the equality follows from (\ref{ODE^2}).

We observe that the integral in the last line goes to zero as $\eps \to 0^+$, because again all functions in the integral are bounded and (\ref{meas_Aeps_Beps}) holds.

So, in the end,
$$\limsup_{\eps\to 0^+} G_\eps(u_\eps, A_\eps) \leq \limsup_{\eps\to 0^+} \int_{A_\eps} \frac{4}{\eps}\left[2(\partial_s q_\eps(\Psi_\eps))^2\Delta \dsig - \partial_s \eta(\Psi_\eps)\right]^2 \,dx.$$

Now we apply the coarea formula exploiting again (\ref{nabla_d=1}) and we get
\begin{align*}
&\int_{A_\eps} \frac{4}{\eps}\left[2(\partial_s q_\eps(\Psi_\eps))^2\Delta \dsig - \partial_s \eta(\Psi_\eps)\right]^2 \,dx\\
=&\int_{-\frac{1}{\sqrt{\eps}}} ^{\frac{1}{\sqrt{\eps}}} ds \int_{\{\dsig=\eps s\}}4\left[2(\partial_s q_\eps(h(y),s))^2\Delta \dsig(y) - \partial_s \eta(h(y),s)\right]^2 d\mathcal{H}^{n-1}(y).
\end{align*}

We can pass to the limit exploiting again Lemma \ref{lemma_ODE} and, recalling (\ref{Delta_d=H}) and (\ref{lim_meas_d=t}), we obtain (\ref{grad_near_Sigma}).

\smallskip

\textsc{Proof of (\ref{grad_far})}: In the set $B_\eps$ we use again the inequality $(\alpha+\beta)^2\leq 2\alpha^2 +2\beta^2 $, so we have
\begin{align*}
\left(2\eps \Delta u_\eps -\frac{W'(u_\eps)}{\eps} \right)^2\leq 8\eps^2(\Delta u_\eps) ^2 + \frac{2}{\eps^2}W'(u_\eps)^2.
\end{align*}

Now we look at the expression (\ref{Deltaueps}) for the Laplacian of $u_\eps$: we recall that $V_\eps$ is bounded, and we observe that each other term is controlled by terms of the form $C\eps^{-2} e^{-1/(C\sqrt{\eps})}$ because of properties \ref{q-a}-\ref{q''<exp} in Lemma \ref{lemma_ODE} and the fact that $|\dsig(x)|/\eps\geq 1/\sqrt{\eps}$ for every $x\in B_\eps$.

Concerning the term with $W'(u_\eps)$, it is enough to recall (\ref{eq:u-chi<q-chi}), hence the Lipschitz continuity of $W'$ on $[a,b]$ implies that in $B_\eps$ we have
\[
    |W'(u_\eps)|\leq \mathrm{Lip}(W',[a,b]) \cdot |q_\eps(\Psi_\eps)-\chi^{a,b}_E|,
\]
and again the right-hand side is bounded by $Ce^{-1/(C\sqrt{\eps})}$ thanks to conditions \ref{q-a} and \ref{b-q} in Lemma \ref{lemma_ODE}.

Therefore $\left(2\eps \Delta u_\eps - W'(u_\eps)/\eps \right)^2$ is uniformly bounded (and actually infinitesimal as $\eps \to 0^+$) in the set $B_\eps$ and this, together with (\ref{energy_far}), implies (\ref{grad_far}).

At this point, the conclusion follows from the following lemma and a diagonal argument.

\begin{lemma}\label{lemma:infF=0}
Let $\Sigma\subset \R^n$ be a closed hypersurface of class $C^\infty$ and let $H:\Sigma\to\R$ be a continuous function. Let us set
$$F(\eta):=\int_{-\infty} ^{+\infty} ds \int_{\Sigma} \left[\partial_s \eta(H(y),s)+2\dot{q}_0(s)^2H(y) \right]^2 d\mathcal{H}^{n-1}(y) \qquad \forall \eta \in C^1(\R^2).$$
Then
$$\inf \left\{F(\eta) :\eta \in C^2 _c(\R^2) \right\}=0.$$
\end{lemma}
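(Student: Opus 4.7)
The plan is to observe that the equation $\partial_s \eta(H(y), s) = -2\dot{q}_0(s)^2 H(y)$ is solved formally by $\eta(t, s) = -t\, G(s)$, where $G(s) := 2 \int_0^s \dot{q}_0(\tau)^2\, d\tau$. This function is smooth and bounded on $\R$, with $G(\pm \infty) = \pm \sigma_W^{a,b}$, and would give $F(\eta) = 0$ outright, except that it is not compactly supported in either variable. The strategy is therefore to truncate this candidate in both $t$ and $s$ and show that the resulting error can be made arbitrarily small.

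Since $\Sigma$ is compact, $M := \|H\|_{L^\infty(\Sigma)}$ is finite. First I would fix a function $\chi \in C_c^\infty(\R)$ with $\chi(t) = t$ on $[-M, M]$. Then, for parameters $R, L > 0$, I would choose a cutoff $\psi_{R,L} \in C_c^\infty(\R)$ with $\psi_{R,L} \equiv 1$ on $[-R, R]$, supported in $[-R-L, R+L]$, satisfying $0 \le \psi_{R,L} \le 1$ and $|\psi_{R,L}'| \le 2/L$, and set
$$\eta_{R,L}(t, s) := -\chi(t)\, G(s)\, \psi_{R,L}(s) \in C_c^2(\R^2).$$

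For $y \in \Sigma$ we have $\chi(H(y)) = H(y)$, and using $G'(s) = 2\dot{q}_0(s)^2$ the integrand in $F$ reduces to
$$\partial_s \eta_{R,L}(H(y), s) + 2\dot{q}_0(s)^2 H(y) = 2 H(y) \dot{q}_0(s)^2 \bigl(1 - \psi_{R,L}(s)\bigr) - H(y)\, G(s)\, \psi_{R,L}'(s).$$
Applying $(a+b)^2 \le 2a^2 + 2b^2$ and integrating in $s$, the first term is supported on $|s| > R$ and its squared $L^2_s$-norm is bounded by $4 M^2 \int_{|s| > R} \dot{q}_0(s)^4\, ds$, which tends to zero as $R \to \infty$ thanks to the exponential decay in Lemma \ref{lemma:q0}. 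The second term is supported on $R \le |s| \le R + L$, where $|G| \le \sigma_W^{a,b}$ and $|\psi_{R,L}'| \le 2/L$, so its squared $L^2_s$-norm is bounded by a constant multiple of $M^2 (\sigma_W^{a,b})^2 / L$. Both estimates are uniform in $y \in \Sigma$, so after integrating over the compact hypersurface $\Sigma$ and letting first $L \to \infty$ and then $R \to \infty$, one obtains $F(\eta_{R,L}) \to 0$.

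I do not expect a serious obstacle here. The only point requiring minor care is that the antiderivative $G$ does not vanish at infinity, so a naive hard cutoff in $s$ would produce an $O(1)$ boundary term; the required gain comes precisely from choosing the transition length $L$ large so that $|\psi_{R,L}'|$ is uniformly small, while simultaneously pushing the support of $\psi_{R,L}'$ out to a region where one has independent smallness either of $G$ or—via the exponential decay of $\dot{q}_0$—of the leading tail. The compactness of $\Sigma$ (and hence the uniform bound on $H$) is what makes the $t$-truncation by $\chi$ harmless.
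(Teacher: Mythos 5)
Your proposal is correct and follows essentially the same route as the paper: the same explicit primitive $-t\int\dot q_0^2$ as the formal minimizer, the same cutoff in $t$ using the uniform bound on $H$, and the same slow cutoff in $s$ with $|\psi'|\le 2/L$ beating the $O(L)$ measure of the transition zone, while the exponential decay of $\dot q_0$ kills the untruncated tail. The only (harmless) slip is the claim $G(\pm\infty)=\pm\sigma_W^{a,b}$ — in fact only $G(+\infty)-G(-\infty)=\sigma_W^{a,b}$ — but your argument uses only the correct bound $|G|\le\sigma_W^{a,b}$.
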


\begin{proof}
Let us consider the function $\Phi \colon \R^2\to \R$ defined as
$$\Phi(t,s):=-2t\int_{-\infty} ^s \dot{q}_0(\tau)^2 \,d\tau.$$

We observe that trivially $F(\Phi)=0$, but unfortunately $\Phi$ is not compactly supported, so we need to introduce some cut-off functions.

Let $\rho \in C^\infty _c (\R)$ be a nonnegative smooth function such that
$$\rho(s)=\begin{cases}
1 &\mbox{if } |s|\leq 1, \\
0 &\mbox{if } |s|\geq 2,
\end{cases}$$
and $\dot{\rho}(s)\leq 2$ for every $s \in \R$. For every positive real number $L>0$ we set $\rho_L(s):=\rho(s/L)$ and we observe that $\dot{\rho}_L(s)\leq 2/L$ for every $s\in \R$.

Let also $g\in C^\infty _c(\R)$ be another nonnegative smooth function such that $g(t)=1$ whenever $|t|\leq \max\{|H(y)|:y \in \Sigma\}$.

Now we set
$$\eta_L(t,s):=\Phi(t,s)\rho_L(s)g(t),$$
and we claim that
\begin{equation}\label{F(eta_L)}
\lim_{L\to +\infty} F(\eta_L)=0.
\end{equation}
Since $\eta_L \in C^2 _c(\R^2)$ for every $L>0$, this is enough to conclude the proof.

To prove (\ref{F(eta_L)}) we first observe that
\begin{equation}\label{eq:ds_etaL}
\partial_s \eta_L(t,s)=-2t \dot{q}_0(s)^2 \rho_L(s)g(t)+ \Phi(t,s)\dot{\rho}_L(s)g(t).
\end{equation}

Moreover, $g(H(y))=1$ for every $y \in \Sigma$, so we can ignore the presence of $g$ in the computation of $F(\eta_L)$. Since $\rho_L=1$ in $[-L,L]$, it follows that $\partial_s \eta_L(H(y),s)=-2\dot{q}_0(s)^2H(y)$ for every $s \in [-L,L]$. Hence
$$\int_{\{|s|<L\}} ds \int_{\Sigma} \left[\partial_s \eta_L(H(y),s)+2\dot{q}_0(s)^2H(y) \right]^2 d\mathcal{H}^{n-1}(y)=0.$$

Therefore, from (\ref{eq:ds_etaL}), we have
$$ F(\eta_L)= \int_{\{L \le |s| \le 2L\}} ds \int_{\Sigma} \left[2\dot{q}_0(s)^2 H(y)(1-\rho_L(s))+\Phi(H(y),s)\dot{\rho}_L(s)\right]^2 d\mathcal{H}^{n-1}(y).$$

Exploiting Lemma \ref{lemma:q0}, the properties of $\rho_L$ and the identity
$$\int_{-\infty} ^{+\infty} \dot{q}_0(s)^2\, ds =\int_{-\infty} ^{+\infty} \sqrt{W(q_0(s))}\dot{q}_0(s)\, ds =\int_a ^b \sqrt{W(q)} \, dq =\frac{\sigma^{a,b} _W}{2},$$
we finally obtain
$$F(\eta_L)\leq 2L \int_{\Sigma} 4H(y)^2 \left[C^2e^{-2L/C}+\frac{\sigma_W ^{a,b}}{L} \right]^2 d\mathcal{H}^{n-1}(y),$$
from which we deduce (\ref{F(eta_L)}).
\end{proof}

\begin{remark}\rm
Repeating the same computations with the modified functional $\widehat{G}_\eps$ instead of $G_\eps$, one obtains analogous estimates, but with $F(\eta)$ replaced by
$$\widehat{F}(\eta):=\int_{-\infty} ^{+\infty} ds \int_{\Sigma} \left[\frac{\partial_s \eta(H(y),s)}{\dot{q}_0(s)}+2\dot{q}_0(s) H(y) \right]^2 d\mathcal{H}^{n-1}(y).$$

Since $\eta$ is compactly supported we have
$$\int_{\Sigma} d\mathcal{H}^{n-1}(y) \int_{-\infty} ^{+\infty} 4\partial_s \eta(H(y),s)H(y)=0.$$

Therefore, expanding the square in the definition of $\widehat{F}$ we obtain
\begin{align*}
\widehat{F}(\eta)&= \int_{\Sigma} d\mathcal{H}^{n-1}(y) \int_{-\infty} ^{+\infty} \left[ \frac{(\partial_s \eta(H(y),s))^2}{\dot{q}_0(s)^2}+4\partial_s \eta(H(y),s)H(y) +4\dot{q}_0(s)^2 H(y)^2 \right]ds\\
&=\int_{\Sigma} d\mathcal{H}^{n-1}(y) \int_{-\infty} ^{+\infty} \left[ \frac{(\partial_s \eta(H(y),s))^2}{\dot{q}_0(s)^2} +4\dot{q}_0(s)^2 H(y)^2 \right]ds.
\end{align*}
This shows that in this case our construction can not decrease the energy with respect to the ``standard'' approximation, corresponding to $\eta\equiv 0$.

Furthermore, it can be checked that if $\eta_L$ are the functions defined in the proof of Lemma~\ref{lemma:infF=0} then
$$\lim_{L\to +\infty} \widehat{F}(\eta_L)=+\infty.$$
\end{remark}

\section{An integrality result in radial simmetry}
\label{section:blow-up}

The aim of this section is to give a proof of Theorem~\ref{theorem:main2}. Here we deal mainly with radial functions, hence we recall some definitions and we write the expression of the functionals in this context.

\begin{definition}
We say that $u \colon \mathbb{R}^n \to \mathbb{R}$ is a radial function if there exists a function $\hat{u} \colon [0,+\infty) \to \mathbb{R}$ such that $u(x) = \hat{u}(\abs{x})$ for every $x \in \mathbb{R}^n$.
\end{definition}

In the sequel, we denote $u$ and $\hat{u}$ with the same letter $u$. In particular, if $r \in [0,+\infty)$ is a nonnegative real number, we write $u(r)$ meaning $u(x)$ for some $x \in \mathbb{R}^n$ with $\abs{x} = r$.

We recall that if $u$ is a sufficiently regular radial function then $\Delta u (x) = \ddot{u}(r) + (n-1)\dot{u}(r)/r$, where $r=\abs{x}$. In particular, if $\Omega = \{ c < \abs{x} < d \}$ then we can rewrite the functionals $E_{\eps}(u, \Omega)$ and $G_{\eps}(u, \Omega)$ as follows:
    \begin{gather*}
        E_{\eps}(u, \Omega) = \int_{c}^{d}
        \left( \eps \dot{u}^2 + \frac{W(u)}{\eps} \right)
        \omega_{n-1} r^{n-1} dr, \\[1ex]
        G_{\eps}(u, \Omega) = \int_{c}^{d}
        \left( 2 \eps \ddot{u} + \frac{2 \eps (n-1)}{r} \dot{u} - \frac{W'(u)}{\eps} \right)^2
        \left( \eps \dot{u}^2 + \frac{W(u)}{\eps} \right)
        \omega_{n-1} r^{n-1} dr.
    \end{gather*}
    
The main idea behind the proof of Theorem~\ref{theorem:main2} is to look at the behavior of $u_{\eps}$ near its $\gamma$-level set, where $\gamma$ is any real number in $(a,b)$. To this end, we study the behavior of a suitable rescaling of $u_{\eps}$ that is usually called blow-up. The precise definition is the following.

\begin{definition}
    \label{def:blow-up}
    Let $u_{\eps} \colon \mathbb{R}^n \to \mathbb{R}$ be a family of radial functions and let $\{ R_{\eps} \} \subset (0,+\infty)$ be a family of positive real numbers. The blow-up of $u_{\eps}$ at $R_{\eps}$ is the family of functions $\psi_{\eps} \colon [-R_{\eps}/\eps,+\infty) \to \mathbb{R}$ defined as
    \[
    \psi_{\eps}(s) := u_{\eps}(R_{\eps} + \eps s).
    \]
\end{definition}

The following proposition shows that if $\{u_{\eps}\}$ is a family of functions with equibounded functionals then the blow-up of $u_{\eps}$ at some points $R_{\eps} \ge r_0 > 0$ must subconverge to a translation of the optimal one-dimensional profile $q_0$ if we know in addition that $\{ u_{\eps}(R_{\eps}) \} \subset [a+\delta,b-\delta]$ for some $\delta > 0$. This proposition is the main ingredient in the proof of Theorem~\ref{theorem:main2}.

\begin{proposition}
    \label{prop:blowup}
    Let $W \colon \mathbb{R} \to [0,+\infty)$ be a potential satisfying (W1), (W2) and (W3+). 
    Let $\{ u_\eps \} \subset W^{2,1}_{\mathsf{loc}}(\mathbb{R}^n) \cap W_{\mathsf{loc}}^{1,2}(\R^n)$ be a family of radial functions such that
    \[
        \limsup_{\eps\to 0^+} \Big( E_{\eps}(u_{\eps}, \mathbb{R}^n) + G_{\eps}(u_\eps, \mathbb{R}^n) \Big) < +\infty.
    \]
    Let us assume that there exist two constants $r_0 > 0$ and $\gamma \in (a,b)$ and a family of points $\{ R_{\eps} \} \subset [r_0, +\infty)$ such that $u_{\eps}(R_{\eps}) \to \gamma$ as $\eps \to 0^+$.
    
    Let $\psi_{\eps}$ be the blow-up of $u_{\eps}$ at $R_{\eps}$. Then, for any sequence $\eps_k \to 0^+$ there exists a sequence $\{ m_k \}$ of positive real numbers such that $m_k \to +\infty$, $m_k \eps_k \to 0$ as $k \to +\infty$ and, up to (not relabelled) subsequences, one of the following holds:
\begin{equation}\label{eq:H2conv}
        \text{either} \quad \norm{\psi_{\eps_k} - q_0 \circ \tau_{\gamma}^{+}}_{W^{2,2}(-m_k,m_k)} \to 0
        \quad \text{or} \quad
        \norm{\psi_{\eps_k} - q_0 \circ \tau_{\gamma}^{-}}_{W^{2,2}(-m_k,m_k)} \to 0,    
\end{equation}
    as $k \to +\infty$, where $\tau_{\gamma}^{+}(s) = s + q_0^{-1}(\gamma)$ and $\tau_{\gamma}^{-}(s) = -s + q_0^{-1}(\gamma)$. 
    
    Moreover, in both cases
    \begin{equation}
    \label{eq:state-limit}
        \lim_{k \to +\infty} \int_{-m_k}^{m_k} \Big( \dot{\psi}_{\eps_k}^2 + W(\psi_{\eps_k}) \Big) ds = \int_{-\infty}^{+\infty} \Big( \dot{q}_0^2 + W(q_0) \Big) ds=\sigma^{a,b}_W.
    \end{equation}
\end{proposition}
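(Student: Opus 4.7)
The plan is to pass to the blow-up variable and use the boundedness of $E_\eps$ and $G_\eps$ to force $\psi_\eps$ to converge to a translate of $q_0$. Setting $r = R_\eps + \eps s$, one has $\dot u_\eps = \dot\psi_\eps/\eps$, $\ddot u_\eps = \ddot\psi_\eps/\eps^2$, and the radial Laplacian produces an extra term $2\eps(n-1)\dot\psi_\eps/(R_\eps + \eps s)$. Since $R_\eps \ge r_0$, on $|s| \le r_0/(2\eps)$ the volume factor $(R_\eps + \eps s)^{n-1}$ is bounded below by $(r_0/2)^{n-1}$, so the bound on $E_\eps(u_\eps,\R^n)$ yields
\[
    \int_{-r_0/(2\eps)}^{r_0/(2\eps)} \bigl(\dot\psi_\eps^2 + W(\psi_\eps)\bigr)\,ds \le \bar C,
\]
while that on $G_\eps(u_\eps,\R^n)$ gives, for every fixed $M$ and $\eps$ small,
\[
    \int_{-M}^{M}\!\left(2\ddot\psi_\eps - W'(\psi_\eps) + \frac{2\eps(n-1)\dot\psi_\eps}{R_\eps + \eps s}\right)^{\!2}\!\bigl(\dot\psi_\eps^2 + W(\psi_\eps)\bigr)\,ds \le C\eps^2.
\]
The Hölder estimate from $\dot\psi_\eps \in L^2$ anchored at $\psi_\eps(0) = u_\eps(R_\eps) \to \gamma$ then produces a uniform $L^\infty_{\mathsf{loc}}(\R)$ bound, and Arzelà–Ascoli yields a subsequence $\psi_{\eps_k} \to \psi$ locally uniformly on $\R$ and weakly in $W^{1,2}_{\mathsf{loc}}(\R)$, with $\psi(0) = \gamma$.

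Next I would extract the limiting ODE. On any compact $J \subset \R$ such that $\psi(J) \subset [a+\delta,b-\delta]$ for some $\delta > 0$ (which holds near $0$ by continuity), we have $W(\psi_{\eps_k}) \ge \mu_J > 0$ on $J$ eventually, so dividing the $G_\eps$-inequality by $\mu_J$ yields
\[
    \left\|2\ddot\psi_{\eps_k} - W'(\psi_{\eps_k}) + \frac{2\eps_k(n-1)\dot\psi_{\eps_k}}{R_{\eps_k} + \eps_k s}\right\|_{L^2(J)}^2 \le \frac{C\eps_k^2}{\mu_J}.
\]
Since the radial correction is $O(\eps_k)$ in $L^2(J)$ by the $L^2$-boundedness of $\dot\psi_{\eps_k}$, it follows that $2\ddot\psi_{\eps_k} - W'(\psi_{\eps_k}) \to 0$ in $L^2(J)$; combined with the uniform convergence $W'(\psi_{\eps_k}) \to W'(\psi)$, this gives $\ddot\psi_{\eps_k} \to \tfrac{1}{2}W'(\psi)$ strongly in $L^2(J)$. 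Hence $\psi \in W^{2,2}_{\mathsf{loc}}$ solves $2\ddot\psi = W'(\psi)$ on $\Omega := \{\psi \in (a,b)\}$, and on each connected component of $\Omega$ the first integral $\dot\psi^2 - W(\psi) \equiv E_0$ is constant.

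The key step, and the main obstacle, is to pin down $E_0 = 0$ on the component $(s_-,s_+)$ of $\Omega$ containing $0$. Fatou's lemma applied to the global bound gives $\dot\psi \in L^2(\R)$ and $W(\psi) \in L^1(\R)$. If $E_0 < 0$, then $W(\psi) \ge -E_0 > 0$ on $(s_-,s_+)$, so $s_+ - s_-$ is finite and $\dot\psi^2 = W(\psi) + E_0 \to E_0 < 0$ at the endpoints, absurd. If $E_0 > 0$, then $\dot\psi$ has definite nonzero sign on $(s_-,s_+)$ and continuity of $\dot\psi$ across $s_{\pm}$ together with (W3+) forces $\psi$ to continue out of $[a,b]$ with $|W'(\psi)|$ bounded below; this produces exponential blow-up of $|\psi - b|$ or $|\psi - a|$ and hence $W(\psi) \notin L^1(\R)$ by Remark~\ref{remark:W4}, again a contradiction. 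Therefore $E_0 = 0$; the resulting first-order ODE $\dot\psi = \pm\sqrt{W(\psi)}$ with $\psi(0) = \gamma$ has unique solutions $q_0 \circ \tau_\gamma^{\pm}$, which never leave $(a,b)$, so $(s_-,s_+) = \R$ and $\psi \equiv q_0 \circ \tau_\gamma^{\pm}$ globally.

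Finally, the strong $L^2_{\mathsf{loc}}$-convergence of $\ddot\psi_{\eps_k}$, the uniform convergence of $\psi_{\eps_k}$, and Rellich's compact embedding $W^{2,2}(J) \hookrightarrow W^{1,2}(J)$ together upgrade the convergence to strong $W^{2,2}_{\mathsf{loc}}$. A standard diagonal selection then produces $m_k \to +\infty$ with $m_k \eps_k \to 0$ such that $\|\psi_{\eps_k} - q_0 \circ \tau_\gamma^{\pm}\|_{W^{2,2}(-m_k,m_k)} \to 0$; the admissible rate of growth of $m_k$ is limited only by how fast $\mu_J$ decays as $J$ expands, which by the exponential decay of $q_0 - a$ and $b - q_0$ from Lemma~\ref{lemma:q0} is exponential, so the constraint $m_k \eps_k \to 0$ is the binding one. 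The energy identity (\ref{eq:state-limit}) follows by combining the strong $W^{1,2}(-m_k,m_k)$-convergence with the tail estimate $\int_{|s|>m_k}(\dot q_0^2 + W(q_0))\,ds \to 0$ and the elementary computation $\int_{\R}(\dot q_0^2 + W(q_0))\,ds = 2\int_a^b \sqrt{W}\,du = \sigma_W^{a,b}$.
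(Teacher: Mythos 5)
Your overall architecture (blow-up, weak $W^{1,2}_{\mathsf{loc}}$ compactness, the second-order ODE on sets where $W(\psi)$ is bounded below, the first integral, identification with $q_0\circ\tau_\gamma^{\pm}$, diagonal extraction) is reasonable, but the crux of the proposition is pinning down the constant $E_0$, and your argument for the case $E_0>0$ has a genuine gap. Your second-order information on $\psi$ --- namely $\ddot\psi_{\eps_k}\to\tfrac12 W'(\psi)$ in $L^2(J)$, hence $\psi\in W^{2,2}(J)\subset C^1(J)$ --- is available only on compact sets $J$ where $W(\psi)$ is bounded away from zero, because you must divide the $G_\eps$ bound by the weight $\dot\psi_\eps^2+W(\psi_\eps)$. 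At the endpoint $s_+$ of the component $(s_-,s_+)$ of $\{\psi\in(a,b)\}$ containing the origin you have $\psi(s_+)=b$, so $W(\psi(s_+))=0$ and no admissible $J$ contains $s_+$. There you only know $\psi\in W^{1,2}_{\mathsf{loc}}(\R)$, i.e.\ $\dot\psi$ is merely $L^2$; the ``continuity of $\dot\psi$ across $s_\pm$'' that you invoke is not established. Without it nothing forces $\psi$ to exit $[a,b]$: the limit could reflect at $s_+$ (reach $b$ with speed $\sqrt{E_0}$ from the left and decrease for $s>s_+$, solving the same ODE with the same first integral on the adjacent component). For such a reflected profile $(s_-,s_+)$ is bounded and $\dot\psi^2=W(\psi)+E_0$ is bounded on it, so neither $\dot\psi\in L^2(\R)$ nor $W(\psi)\in L^1(\R)$ is violated and your contradiction evaporates. (The case $E_0<0$ is fine, since there you only take one-sided limits from inside the component.)

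The paper obtains $E_0=0$ by a quantitative mechanism that needs no regularity of $\psi$ at the zeros of $W(\psi)$: in the $G_\eps$ integrand it bounds the weight from below by $\dot\psi_\eps^2$ alone and observes that $(2\ddot\psi_\eps-W'(\psi_\eps))\dot\psi_\eps$ is exactly the derivative of the discrepancy $\dot\psi_\eps^2-W(\psi_\eps)$. This gives $\|\tfrac{d}{ds}(\dot\psi_\eps^2-W(\psi_\eps))\|_{L^1(-m,m)}=O(\eps(\sqrt{m}+1))$ while $\|\dot\psi_\eps^2-W(\psi_\eps)\|_{L^1(-m,m)}\le C$ with $C$ independent of $m$; hence the discrepancy converges in $W^{1,1}(-m,m)$ to a constant $c_0$ with $2m\abs{c_0}\le C$ for every $m$, forcing $c_0=0$. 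A separate lemma on squares of $W^{1,1}$-bounded continuous functions is then used to identify the limit of $\dot\psi_\eps^2$ with $\dot\psi_0^2$, since on the whole line one only has weak convergence of $\dot\psi_\eps$. To repair your proof you should either import this discrepancy estimate to exclude $E_0>0$, or find another way to rule out the reflection scenario; as written that step is not justified. (Your final energy identity is also stated too quickly --- the term $\int_{-m_k}^{m_k}W(\psi_{\eps_k})$ over growing intervals requires the Taylor expansion with $W'(\psi_0)\in L^2(\R)$ as in the paper --- but this is a minor issue compared with the one above.)
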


\begin{remark}\rm
    \label{remark:H2-C1}
    It is well known that if $I \subset \mathbb{R}$ is an open interval then $W^{1,2}(I) \subset C_{\mathsf{b}}(I)$ with continuous injection, where $C_{\mathsf{b}}(I)$ is the space of continuous and bounded functions on $I$. Moreover, for any $u \in W^{1,2}(I)$, it holds (see for instance Theorem~8.8 and footnote 6 on page~209 in \cite{Brezis})
    \begin{equation*}
        \norm{u}_{L^\infty(I)} \le 4 \sqrt{2} \left( 1 + \frac{1}{\Leb^1(I)} \right) \norm{u}_{W^{1,2}(I)}.
    \end{equation*}
    
    In particular, this implies that the conclusion of Proposition~\ref{prop:blowup} holds also if in (\ref{eq:H2conv}) we replace the $W^{2,2}$ norm with the $C^1$ norm.
\end{remark}

In the proof of Proposition~\ref{prop:blowup} we need the following two lemmas.

\begin{lemma}
    \label{lemma:quadrato}
    Let $[c,d] \subset \mathbb{R}$ be an interval and let $\{ f_{\eps} \} \subset C([c,d])$ be a family of continuous functions. Assume that:
    \begin{enumerate}[label=(\roman*)]
        \item $\left\{ f_{\eps}^2 \right\} \subset W^{1,1}(c,d)$ and there exists a constant $M > 0$ such that $\norm*{ f_{\eps}^2 }_{W^{1,1}(c,d)} \le M$,\label{hyp:lemma-quadrato-1}
        \item $f_{\eps} \rightharpoonup f_0$ weakly in $L^1(c,d)$.\label{hyp:lemma-quadrato-2} 
        \end{enumerate}
    Then $f_{\eps} \to f_0$ strongly in $L^p(c,d)$ for any $p \in [1,+\infty)$.
\end{lemma}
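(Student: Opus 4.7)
The plan is to split $f_{\eps}$ into its positive and negative parts $f_{\eps}^{\pm} := \max(\pm f_{\eps}, 0)$ and apply compactness separately to each piece, thereby avoiding any direct bookkeeping of the sign oscillations of $f_{\eps}$.

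The first step, and the main technical point, is to verify that both $(f_{\eps}^{+})^2$ and $(f_{\eps}^{-})^2$ belong to $W^{1,1}(c,d)$ with the same bound $M$ as $f_{\eps}^{2}$. These functions are continuous on $[c,d]$ and nonnegative, and the closed set $A_{\eps}^{+} := \{f_{\eps} \ge 0\}$ enjoys the crucial property that $f_{\eps} = 0$ at every point of $\partial A_{\eps}^{+}$ interior to $[c,d]$: such a point is a limit of points in $\{f_{\eps} < 0\}$, and continuity forces the value there to be nonpositive, while $f_{\eps} \ge 0$ on $A_{\eps}^{+}$. Writing $(f_{\eps}^{+})^2 = f_{\eps}^{2} \chi_{A_{\eps}^{+}}$ and testing against $\phi \in C_{c}^{\infty}(c,d)$, I would integrate by parts on each of the countably many connected components $(\alpha_i,\beta_i)$ of the open complement $[c,d] \setminus A_{\eps}^{+}$: all boundary terms vanish, either because $f_{\eps}^{2} = 0$ at an interior endpoint (by continuity) or because $\phi$ vanishes at $c$ and $d$. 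This identifies the weak derivative of $(f_{\eps}^{+})^{2}$ as $(f_{\eps}^{2})' \chi_{A_{\eps}^{+}}$ and yields the estimate $\|(f_{\eps}^{+})^2\|_{W^{1,1}(c,d)} \le \|f_{\eps}^{2}\|_{W^{1,1}(c,d)} \le M$; the same argument applies to $(f_{\eps}^{-})^{2}$.

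Once the uniform $W^{1,1}$ bounds on $(f_{\eps}^{\pm})^{2}$ are secured, two standard one-dimensional facts conclude the argument. The continuous embedding $W^{1,1}(c,d) \hookrightarrow L^{\infty}(c,d)$ provides a uniform $L^{\infty}$ bound on $\{(f_{\eps}^{\pm})^{2}\}$ and hence on $\{f_{\eps}^{\pm}\}$, and the compact embedding $W^{1,1}(c,d) \hookrightarrow\hookrightarrow L^{1}(c,d)$ yields, up to a not-relabelled subsequence, nonnegative functions $g^{\pm} \in L^{\infty}(c,d)$ with $(f_{\eps}^{\pm})^{2} \to g^{\pm}$ strongly in every $L^{p}(c,d)$, $p \in [1,+\infty)$, and, after a further extraction, pointwise almost everywhere. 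Taking square roots gives $f_{\eps}^{\pm} \to \sqrt{g^{\pm}}$ pointwise a.e., and dominated convergence (justified by the uniform $L^{\infty}$ bound) upgrades this to strong convergence in every $L^{p}(c,d)$.

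Subtracting the two convergences yields
\[
    f_{\eps} \;=\; f_{\eps}^{+} - f_{\eps}^{-} \;\longrightarrow\; \sqrt{g^{+}} - \sqrt{g^{-}}
    \quad \text{strongly in } L^{p}(c,d), \ p \in [1,+\infty).
\]
Since strong $L^{1}$ convergence implies weak $L^{1}$ convergence and the latter limit must, by hypothesis~\ref{hyp:lemma-quadrato-2}, be the prescribed function $f_{0}$, we conclude $\sqrt{g^{+}} - \sqrt{g^{-}} = f_{0}$ almost everywhere; as the same reasoning applied along any subsequence produces the same limit $f_{0}$, the full family converges strongly in $L^{p}(c,d)$ for every $p \in [1,+\infty)$. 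The only step I expect to require genuine care is the truncation argument in the second paragraph: the integration by parts across the open set $[c,d] \setminus A_{\eps}^{\pm}$ works precisely because continuity of $f_{\eps}$ forces $f_{\eps}^2$ to vanish on the interior boundary of the sign-sets, so that no extra contributions arise.
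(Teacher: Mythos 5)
Your proposal is correct and follows essentially the same route as the paper: the crucial step in both is the truncation claim that $(f_{\eps}^{\pm})^2$ belongs to $W^{1,1}(c,d)$ with the same bound as $f_{\eps}^2$, proved by integrating by parts over the connected components of the sign sets, where continuity of $f_{\eps}$ forces the interior boundary terms to vanish, followed by compactness of $W^{1,1}(c,d)\hookrightarrow L^1(c,d)$, almost everywhere convergence, and taking square roots. The only cosmetic difference is the final upgrade to strong $L^p$ convergence: you use the uniform $L^{\infty}$ bound together with dominated convergence, whereas the paper invokes the fact that weak convergence in $L^q$ plus almost everywhere convergence implies strong convergence in $L^p$ for $p<q$.
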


\begin{proof}
The proof relies on the following general fact (see~\cite{Brezis}, exercise~4.16(3) on page~123, solution on pages~398-399).

Let $1 \le p < q$. Then:
    \[
        \begin{cases*}
            f_{\eps} \rightharpoonup f_0 \ \text{weakly in $L^q(c,d)$} \\
            f_{\eps} \to f_0 \ \text{a.e. on $(c,d)$} 
        \end{cases*}
            \implies
            f_{\eps} \to f_0 \ \text{strongly in $L^p(c,d)$}.
    \]

Fix $1 \le p < q$; if we prove that $\{ f_{\eps} \}$ is a bounded family in $L^q(c,d)$ then assumption~(ii) immediately implies that $\{f_{\eps}\}$ converges weakly to $f_0$ in $L^q(c,d)$. The boundedness of $\{ f_{\eps}\}$ is an easy consequence of assumption~(i), indeed
\[
    \norm*{f_{\eps}}_{L^q(c,d)}^2 = \norm*{f_{\eps}^2}_{L^{q/2}(c,d)} \le C \norm*{f_{\eps}^2}_{W^{1,1}(c,d)} \le C M,
\]
where $C > 0$ is a constant depending only on the length of $(c,d)$.

Now we prove the almost everywhere convergence up to subsequences. Let us define
\[
    f_{\eps, +} = \max\{f_{\eps},0\}, \quad f_{\eps, -} = \max\{-f_{\eps},0\}.
\]
\textsc{Claim}: $(f_{\eps,+})^2 \in W^{1,1}(c,d)$ and $\norm*{(f_{\eps,+})^2}_{W^{1,1}(c,d)} \le \norm*{f_{\eps}^2}_{W^{1,1}(c,d)}$. A similar statement holds for $(f_{\eps,-})^2$. 

Assuming the claim the conclusion follows thanks to the compactness of the embedding $W^{1,1}(c,d) \hookrightarrow L^1(c,d)$. Indeed, there exist $h_{+}, h_{-} \in L^1(c,d)$ and $\eps_k \to 0^+$ such that
\begin{align*}
    (f_{\eps_k,+})^2 & \to h_{+} \ \text{almost everywhere on $(c,d)$}, \\
    (f_{\eps_k,-})^2 & \to h_{-} \ \text{almost everywhere on $(c,d)$}.
\end{align*}
Thus, $f_{\eps} \to \sqrt{h_{+}} - \sqrt{h_{-}}$ almost everywhere on $(c,d)$. Moreover assumption~(ii) tells us that necessarily $f_0 = \sqrt{h_{+}} - \sqrt{h_{-}}$. 

Now, we turn back to the proof of the \textsc{Claim}. We notice that it is sufficient to show that the function $v_{\eps}$ defined as 
\[
    v_{\eps}(s) =
    \begin{cases}
    \big(f_{\eps}^2\big)'(s) & \text{if $s \in A_{\eps}:=\{ f_{\eps} > 0 \}$}, \\
    0 & \text{if $s \in (c,d)\setminus A_\eps$},
    \end{cases}
\]
is the weak derivative of $(f_{\eps,+})^2$. Let us write $A_{\eps} = \cup_i I_{\eps}^i$ where $\{ I_{\eps}^i \}$ is an at most countable family of disjoint open intervals and let us fix $\phi \in C^{\infty}_c(c,d)$. By assumption $f_{\eps}$ is continuous, in particular $f_{\eps} = 0$ on $\partial I_{\eps}^i$, therefore it holds 
\[
    \int_{c} ^d f_{\eps}^2 \phi' ds
    = \sum_i \int_{I_{\eps}^i} f_{\eps}^2 \phi' ds
    = - \sum_i \int_{I_{\eps}^i} (f_{\eps}^2)' \phi \,ds
    = - \int_{c} ^d (f_{\eps}^2)' \phi \,ds,
\]
from which the conclusion follows.
\end{proof}

\begin{lemma}
    \label{lemma:caratterizzazione}
    Let $W \colon \mathbb{R} \to [0,+\infty)$ be a potential satisfying (W1) and (W2). Suppose that $\psi \colon \mathbb{R} \to \mathbb{R}$ is a continuous function such that:
    \begin{itemize}
        \item[(i)] $\psi \in C^{1}(\Omega)$ where $\Omega := \{ W(\psi) > 0 \}$,
        \item[(ii)] $\dot{\psi}(s)^2 = W(\psi(s))$ for every $s \in \Omega$,
        \item[(iii)] $\psi(0) = \gamma \in (a,b)$.
    \end{itemize}
    Then either $\psi = q_{0} \circ \tau_{\gamma}^{+}$ or $\psi = q_{0} \circ \tau_{\gamma}^{-}$, where $\tau_{\gamma}^{+}$ and $\tau_{\gamma}^{-}$ are defined in Proposition~\ref{prop:blowup}. 
\end{lemma}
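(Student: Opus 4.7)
The plan is to restrict to the connected component $I = (s_-, s_+)$ of $\Omega$ containing $0$ (which is nonempty since assumption (iii) and (W2) give $W(\gamma) > 0$), identify $\psi$ on $I$ with one of the two translates of $q_0$ by an ODE uniqueness argument, and then show that $I$ must in fact equal all of $\mathbb{R}$.

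The first step is to observe that on $I$ the derivative $\dot{\psi}$ is continuous and satisfies $\dot{\psi}^2 = W(\psi) > 0$, so $\dot{\psi}$ never vanishes and, by the intermediate value theorem, has constant sign on $I$. This gives two symmetric cases; I treat $\dot{\psi} > 0$ in detail, the other case producing $q_0 \circ \tau_\gamma^-$ by the identical argument with $-\sqrt{W}$. Under this sign choice, $\psi$ solves the Cauchy problem $\dot{u} = \sqrt{W(u)}$, $u(0) = \gamma$ on $I$. By (W1), $\sqrt{W}$ is of class $C^1$ on the open set $\{W > 0\}$, so Cauchy-Lipschitz applies in a neighbourhood of $\gamma$. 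Since $q_0 \circ \tau_\gamma^+$ solves the same Cauchy problem (Remark~\ref{remark:transaltions}) and, by Lemma~\ref{lemma:q0}(i), takes values in $(a,b)$ where $W$ is strictly positive by (W2), a standard connectedness argument (the coincidence set is relatively open and closed in the connected set $I$) yields $\psi = q_0 \circ \tau_\gamma^+$ throughout $I$.

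To conclude, I need to rule out $I \subsetneq \mathbb{R}$. Assume by contradiction that $s_+ < +\infty$. Since $q_0 \circ \tau_\gamma^+$ extends continuously to $s_+$ with a value that still lies in $(a,b)$ by Lemma~\ref{lemma:q0}(i), the continuity of $\psi$ on $\mathbb{R}$ forces $\psi(s_+) \in (a,b)$, and hence $W(\psi(s_+)) > 0$. But then $s_+ \in \Omega$, contradicting the maximality of $I$ as a connected component. The symmetric argument gives $s_- = -\infty$, so $\psi = q_0 \circ \tau_\gamma^+$ on all of $\mathbb{R}$.

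The only mildly delicate point is the uniqueness step, since $\sqrt{W}$ is not Lipschitz at the zeros of $W$ (in particular at $a$ and $b$); however this is not a real obstacle because the comparison profile $q_0 \circ \tau_\gamma^\pm$ stays uniformly strictly inside $(a,b)$ on every compact subinterval, so the Cauchy problem is considered only on the region $\{W > 0\}$ where $\sqrt{W}$ is smooth. No hypothesis beyond (W1) and (W2) is needed.
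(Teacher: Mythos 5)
Your proof is correct and takes essentially the same route as the paper's: restrict to the connected component of $\Omega$ containing $0$, fix the sign of $\dot{\psi}$ there, identify $\psi$ with $q_0\circ\tau_\gamma^{\pm}$ by local Cauchy--Lipschitz uniqueness on the region where $W>0$, and use the fact that $q_0$ takes values in $(a,b)$ to force that component to be all of $\mathbb{R}$. You merely spell out the uniqueness and maximality steps that the paper's proof states more tersely.
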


\begin{proof}
    The third assumption tells us that $0 \in \Omega$. Let $\mathcal{I} \subseteq \Omega$ be the largest open interval containing $0$. It follows from~(i) and~(ii) that either $\dot{\psi} > 0$ on $\mathcal{I}$ or $\dot{\psi} < 0$ on $\mathcal{I}$. We treat the case $\dot{\psi} > 0$ since the other one is similar. We know that 
    \begin{equation}
        \label{lemma:car:radice}
        \begin{cases}
        \dot{\psi}(s) = \sqrt{W(\psi(s))} & \forall s \in \mathcal{I}, \\ 
        \psi(0) = \gamma \in (a,b). 
        \end{cases}
    \end{equation}
    
    On the other hand $q_0 \circ \tau_{\gamma}^{+}$ solves~(\ref{lemma:car:radice}) on the whole real line (see Remark~\ref{remark:transaltions}), therefore by the uniqueness of the solution and the maximality of $\mathcal{I}$ we have $\mathcal{I} = \mathbb{R}$ and $\psi = q_0 \circ \tau_{\gamma}^{+}$. 
    
    In the case $\dot{\psi} < 0$ we obtain $\psi = q_0 \circ \tau_{\gamma}^{-}$.
\end{proof}

\begin{proof}[Proof of Proposition~\ref{prop:blowup}]
    Before starting with the proof we outline briefly the strategy in four steps. 
    
    \textsc{Step 1}: The bound on $E_{\eps}(u_{\eps}, \mathbb{R}^n)$ implies that there exists $\psi_0 \in W^{1,2}_{\mathsf{loc}}(\mathbb{R})$ such that, up to subsequences, for any real number $m > 0$ it holds
    \begin{equation}
    \label{step1-1}
        \psi_{\eps} \rightharpoonup \psi_0 \ \text{weakly in} \ W^{1,2}(-m,m).
    \end{equation}
    
    \textsc{Step 2}: The bound on $G_{\eps}(u_{\eps}, \mathbb{R}^n)$, together with the previous bound, implies that for any $m > 0$ we have
    \begin{equation}
    \label{step2-1}
        \dot{\psi}_{\eps}^2 - W(\psi_{\eps}) \to 0 \ \text{strongly in} \ W^{1,1}(-m,m).
    \end{equation}
    
    \textsc{Step 3}: Combining the previous steps we notice that for any $m > 0$ the family $\{\dot{\psi}_{\eps}^2\}$ is bounded in $W^{1,1}(-m,m)$. Therefore we are in a position to apply Lemma~\ref{lemma:quadrato} with $f_{\eps} = \dot{\psi}_{\eps}$ (which is a family of continuous functions because $\{ u_\eps \} \subset W^{2,1}_{\mathsf{loc}}(\mathbb{R}^n)$) and $p=2$ to deduce 
    \begin{gather}
    \label{step3-1}
        \dot{\psi}_{\eps} \to \dot{\psi}_0 \ \text{strongly in} \ L^2(-m,m), \\[0.5ex] 
    \label{step3-2}
        \dot{\psi}_0^2 = W(\psi_0) \ \text{a.e. on $(-m,m)$}.
    \end{gather}
    
    \textsc{Step 4}: Using Lemma~\ref{lemma:caratterizzazione} we conclude that if $\psi_0$ is a solution of~(\ref{step3-2}) then necessarily $\psi_0 = q_0 \circ \tau_{\gamma}^{+}$ or $\psi_0 = q_0 \circ \tau_{\gamma}^{-}$. Finally, we improve the convergence in~(\ref{step1-1}) to strong convergence in $W^{2,2}(-m,m)$.
    
    We start with the actual proof. By assumption there exist two constants $\bar{\eps},C_0 > 0$ such that for every $\eps \in (0,\bar{\eps})$ it holds
    \begin{equation}
    \label{stima-funzionali}
        E_{\eps}(u_{\eps}, \mathbb{R}^n) + G_{\eps}(u_\eps, \mathbb{R}^n) \le C_0.
    \end{equation}
 
    By a change of variable in the integral, we can rewrite the functionals in terms of $\psi_{\eps}$:
    \begin{gather} 
        E_{\eps}(u_{\eps}, \mathbb{R}^n) =                                  \int_{-\frac{R_{\eps}}{\eps}}^{+\infty}
        \left( \dot{\psi}_{\eps}^2 + W(\psi_{\eps}) \right)
        \omega_{n-1} (R_{\eps} + \eps s)^{n-1} ds,\label{eq:energy-bu} \\
        G_{\eps}(u_{\eps}, \mathbb{R}^n) = \int_{-\frac{R_{\eps}}{\eps}}^{+\infty}
        \left( \frac{2\ddot{\psi}_{\eps} - W'(\psi_{\eps})}{\eps} + \frac{2(n-1)}{R_{\eps} + \eps s} \dot{\psi}_{\eps} \right)^2
        \left( \dot{\psi}_{\eps}^2 + W(\psi_{\eps}) \right)
        \omega_{n-1} (R_{\eps} + \eps s)^{n-1} ds.\nonumber
    \end{gather}
    Now, for any $m > 0$ there exists $\eps_m \in (0,\bar{\eps})$ such that for any $\eps \in (0,\eps_m)$ it holds
    \begin{equation}
    \label{controllo:raggio}
        R_{\eps} + \eps s \ge \frac{r_0}{2} \quad \forall s \in [-m,m].
    \end{equation}
    Accordingly, for any $\eps \in (0,\eps_m)$ we have
    \begin{align}
        \notag
            C_0 \ge G_{\eps}(u_{\eps}, \mathbb{R}^n) & \ge 
            \int_{-m}^{m}
            \left( \frac{2\ddot{\psi}_{\eps} - W'(\psi_{\eps})}{\eps} + \frac{2(n-1)}{R_{\eps} + \eps s} \dot{\psi}_{\eps} \right)^{2}
            \left( \dot{\psi}_{\eps}^2 + W(\psi_{\eps}) \right)
            \omega_{n-1} \left( \frac{r_0}{2} \right) ^{n-1} ds \\[1.5ex]
        \notag
            & \ge
            \int_{-m}^{m}
            \left( \frac{2\ddot{\psi}_{\eps} - W'(\psi_{\eps})}{\eps} + \frac{2(n-1)}{R_{\eps} + \eps s} \dot{\psi}_{\eps} \right)^2
            \dot{\psi}_{\eps}^2\ 
            \omega_{n-1} \left( \frac{r_0}{2} \right) ^{n-1} ds \\[1.5ex]
        \label{stima1}
            & =
            \int_{-m}^{m}
            \left( \frac{2\ddot{\psi}_{\eps} \dot{\psi}_{\eps} - W'(\psi_{\eps}) \dot{\psi}_{\eps}}{\eps} + \frac{2(n-1)}{R_{\eps} + \eps s} \dot{\psi}_{\eps}^2 \right)^2
            \omega_{n-1} \left( \frac{r_0}{2} \right) ^{n-1} ds.
    \end{align}
    
    \noindent We observe that the term $2\ddot{\psi}_{\eps} \dot{\psi}_{\eps} - W'(\psi_{\eps}) \dot{\psi}_{\eps}$ in the last integral is the derivative of $\dot{\psi}_{\eps}^2 - W(\psi_{\eps})$. In particular, if we set
    \[
    \beta_{\eps} := \dot{\psi}_{\eps}^2
    \quad \text{and} \quad
    g_{\eps} := W(\psi_{\eps})
    \]
    we can rewrite~(\ref{stima1}) as follows:
    \begin{equation}
    \label{stima-star}
        \int_{-m}^{m}
            \left( \frac{ \dot{\beta}_{\eps} - \dot{g}_{\eps} } {\eps} + \frac{2(n-1)}{R_{\eps} + \eps s} \beta_{\eps} \right)^2
            ds 
        \le
        \frac{C_0}{\omega_{n-1}} \left( \frac{2}{r_0} \right)^{n-1}.
    \end{equation}
    Using this notation we deduce from~(\ref{stima-funzionali}) and (\ref{eq:energy-bu}) that for any $\eps \in (0, \eps_m)$ it holds
    \begin{equation}
    \label{stima-starstar}
        \int_{-m}^{m}
        \left( \beta_{\eps} + g_{\eps} \right) ds
        \le
        \frac{C_0}{\omega_{n-1}} \left( \frac{2}{r_0} \right)^{n-1}.
    \end{equation}
    
    Notice also that the constant in the right hand side of~(\ref{stima-star}) and~(\ref{stima-starstar}) does not depend on $m > 0$; this will be important in the sequel.
    
    Exploiting that $W$ satisfies assumption (W3+) we deduce from Remark~\ref{remark:W4} that there exists $\zeta > 0$ such that $W(u) \ge \kappa^2 u^2/2 - \zeta$ for every $u \in \mathbb{R}$. Therefore, the validity of~(\ref{step1-1}) in \textsc{Step 1} immediately follows from~(\ref{stima-starstar}).
    
    Moreover, the fact that $W$ satisfies (W1) implies that $W$ and $W'$ are locally Lipschitz continuous, therefore $\{g_{\eps}\}$ converges locally uniformly to $W(\psi_0)$ and $\{W'(\psi_\eps)\}$ converges locally uniformly to $W'(\psi_0)$. In particular, for any $m > 0$, it follows that
    \begin{equation}
        \label{eq:weak-conv-g}
        g_{\eps} \rightharpoonup W(\psi_0) \ \text{weakly in $W^{1,2}(-m,m)$}.
    \end{equation}
   
   In order to obtain~(\ref{step2-1}) in \textsc{Step 2} we first claim that, up to subsequences, the functions $\beta_{\eps} - g_{\eps}$ converge strongly to a constant in $W^{1,1}(-m,m)$ for any $m > 0$. Then we will show that any such constant must be zero.
   
   To prove the claim it is clearly sufficient to establish, for any $\eps \in (0,\eps_m)$, the following estimates:
   \begin{equation}
   \label{stima-step2}
        \norm*{\beta_{\eps} - g_{\eps}}_{L^1(-m,m)} \le C_1,
        \qquad
        \norm*{\dot{\beta}_{\eps} - \dot{g}_{\eps}}_{L^1(-m,m)} \le C_1 (\sqrt{m} + 1) \eps,
   \end{equation}
   for some constant $C_1 > 0$ which does not depend on $m > 0$. Notice that from~(\ref{stima-starstar}) we have
   \begin{equation}
        \label{eq:discrepancy}
        \norm*{\beta_{\eps} - g_{\eps}}_{L^1(-m,m)}
        \le
        \norm*{\beta_{\eps} + g_{\eps}}_{L^1(-m,m)}
        \le
        \frac{C_0}{\omega_{n-1}} \left( \frac{2}{r_0} \right)^{n-1}.
   \end{equation}
   Moreover, using~(\ref{controllo:raggio}), (\ref{stima-star}) and~(\ref{stima-starstar}) we have
   \begin{align}
        \notag
        \norm*{\dot{\beta}_{\eps} - \dot{g}_{\eps}}_{L^1}
        & \le
        \eps \norm*{\frac{2(n-1)}{R_{\eps} + \eps s} \beta_{\eps}}_{L^1}
        +
        \norm*{\dot{\beta}_{\eps} - \dot{g}_{\eps} + \frac{2\eps(n-1)}{R_{\eps} + \eps s} \beta_{\eps}}_{L^1} \\[1.5ex]
        \notag
        & \le
        \eps \norm*{\frac{2(n-1)}{R_{\eps} + \eps s}}_{L^{\infty}} \cdot \norm*{\beta_{\eps}}_{L^1}
        +
        \sqrt{2m} \cdot \norm*{\dot{\beta}_{\eps} - \dot{g}_{\eps} + \frac{2\eps(n-1)}{R_{\eps} + \eps s} \beta_{\eps}}_{L^2} \\[1.5ex]
        \label{eq:ddiscrepancy}
        & \le
        \eps \left\{ \frac{2(n-1)C_0}{\omega_{n-1}} \left( \frac{2}{r_0} \right)^n + \sqrt{ \frac{2mC_0}{\omega_{n-1}} \left( \frac{2}{r_0} \right)^{n-1} }  \right\},
   \end{align} 
   where all norms are implicitly computed on $(-m,m)$. The validity of~(\ref{stima-step2}) is now an easy consequence of~(\ref{eq:discrepancy}) and~(\ref{eq:ddiscrepancy}).
   
   We pick a subsequence $\{\eps_k\}$ such that $\{\beta_{\eps_k} - g_{\eps_k}\}$ converges strongly to a constant $c_0$ in $W^{1,1}(-m,m)$ as $k\to +\infty$. From~(\ref{stima-step2}) it follows that
   \begin{equation}
   \label{stima-c0}
        2m \cdot c_0 = \lim_{k \to +\infty} \norm*{\beta_{\eps_k} - g_{\eps_k}}_{L^1(-m,m)} \le C_1
   \end{equation}
   and recalling that $C_1$ in~(\ref{stima-c0}) does not depend on $m$ we deduce $c_0=0$, which completes the proof of~\textsc{Step 2}.
   
   \smallskip
   
   So far we know that $\{\beta_{\eps} - g_{\eps}\}$ converges strongly to zero in $W^{1,1}(-m,m)$, thus if we are able to prove that $\{\beta_{\eps} - g_{\eps}\}$ also converges to $\dot{\psi}_0^2 - W(\psi_0)$ then necessarily $\dot{\psi}_0^2 = W(\psi_0)$ almost everywhere on $(-m,m)$ and hence, since $m>0$ is arbitrary, also in $\R$. 
   We start by proving~(\ref{step3-1}) in \textsc{Step 3}. This is a direct consequence of Lemma~\ref{lemma:quadrato} applied with $f_{\eps} = \dot{\psi}_{\eps}$, provided that we check assumptions \ref{hyp:lemma-quadrato-1} and \ref{hyp:lemma-quadrato-2}. The validity of assumption \ref{hyp:lemma-quadrato-1} is obtained from~(\ref{stima-step2}) and a triangular inequality, because~(\ref{eq:weak-conv-g}) implies that $\{g_\eps\}$ is bounded in $W^{1,1}(-m,m)$. Assumption~\ref{hyp:lemma-quadrato-2} follows from~(\ref{step1-1}).
   
   Using this fact we can show that $\{g_{\eps}\}$ converges strongly to $W(\psi_0)$ in $W^{1,2}(-m,m)$. Indeed from~(\ref{eq:weak-conv-g}) we already know that weak convergence holds; on the other hand, from~(\ref{step3-1}) we immediately conclude that
   \begin{equation}
   \label{conv:g}
      \dot{g}_{\eps} = W'(\psi_{\eps}) \dot{\psi}_{\eps} \to W'(\psi_0) \dot{\psi}_0 \ \text{strongly in} \ L^{2}(-m,m).
   \end{equation}
   
   The convergence of $\{\beta_{\eps}\}$ to $\dot{\psi}_0^2$ is also a consequence of~(\ref{step3-1}); indeed we know from~(\ref{step2-1}), (\ref{eq:weak-conv-g}) and~(\ref{conv:g}) that $\{\beta_{\eps}\}$ converges strongly to $W(\psi_0)$ in $W^{1,1}(-m,m)$. On the other hand~(\ref{step3-1}) ensures the existence of a subsequence $\{\beta_{\eps_k}\}$ converging almost everywhere to $\dot{\psi}_0^2$ and therefore~(\ref{step3-2}) in \textsc{Step 3} holds. 
   
   \smallskip
   
   At this point we want to conclude that either $\psi_0 = q_0 \circ \tau_{\gamma}^{+}$ or $\psi_0 = q_0 \circ \tau_{\gamma}^{-}$ using Lemma~\ref{lemma:caratterizzazione} with $\psi = \psi_0$. We have to check that the assumptions are satisfied. Notice that $\psi_{\eps}(0) = u_{\eps}(R_{\eps})$ and by assumption $u_{\eps}(R_{\eps}) \to \gamma \in (a,b)$ as $\eps \to 0^+$, hence assumption (iii) holds. Moreover, the validity of~(ii) follows from~(\ref{step3-2}) once we show that~(i) holds. Therefore, we focus on proving that $\psi_0 \in C^1(\Omega)$, where $\Omega = \{ W(\psi_0) > 0 \}$. Given $\delta>0$, we consider the open set
   \[
    \Omega_{m, \delta} := \{ s \in (-m,m) : W(\psi_0(s)) > \delta \}.
   \]
   It is enough to show that $\psi_0 \in C^1(\Omega_{m,\delta})$ for any $m > 0$ and $\delta > 0$. It is clear from the local uniform convergence of $\{g_\eps\}$ to $W(\psi_0)$ that there exists $\eps_{m,\delta} \in (0,\eps_m)$ such that for any $\eps \in (0,\eps_{m,\delta})$ it holds $W(\psi_{\eps}(s)) \ge \delta/2$ for every $s \in \Omega_{m,\delta}$. Starting from the first line in~(\ref{stima1}) we obtain the following estimate
   \begin{equation}
        \label{step4:verifica}
        \int_{\Omega_{m,\delta}} \left( 2\ddot{\psi}_{\eps} - W'(\psi_{\eps}) + \frac{2\eps(n-1)}{R_{\eps} + \eps s} \dot{\psi}_{\eps} \right)^2
        ds 
        \le
        \frac{\eps^2}{\delta} \cdot \frac{2C_0}{\omega_{n-1}} \left( \frac{2}{r_0} \right)^{n-1},
   \end{equation}
   which is valid for any $\eps \in (0,\eps_{m,\delta})$. Taking into account~(\ref{step4:verifica}) and the fact that
   \begin{equation}
        \label{step4:sup}
        \sup_{\eps\in (0,\eps_m)} \left\{ \norm{W'(\psi_{\eps})}_{L^2(-m,m)} + \norm{\dot{\psi}_{\eps}}_{L^2(-m,m)} \right\} < +\infty,
   \end{equation}
   we deduce that $\{ \ddot{\psi}_{\eps} \}_{\eps \in (0,\eps_{m,\delta})}$ is bounded in $L^2(\Omega_{m,\delta})$ which implies $\psi_0 \in W^{2,2}(\Omega_{m,\delta})$ and in particular $\psi_0 \in C^1(\Omega_{m,\delta})$.
   Finally, thanks to Lemma~\ref{lemma:caratterizzazione} we obtain that either $\psi_0 = q_0 \circ \tau_{\gamma}^{+}$ or $\psi_0 = q_0 \circ \tau_{\gamma}^{-}$.
   
   Now, we prove that $\{\psi_{\eps}\}$ converges strongly to $\psi_0$ in $W^{2,2}(-m,m)$. Combining the precise expression of $\psi_0$ that we have just obtained with~(i) in Lemma~\ref{lemma:q0} and Remark~\ref{remark:transaltions} we easily deduce that $a < \psi_0(s) < b$ for every $s \in \mathbb{R}$. 
   Therefore, for any $m > 0$ there exists $\delta_m > 0$ such that $\Omega_{m,\delta} = (-m,m)$ for any $\delta \in (0,\delta_m)$.
   
   Using the expression of $\Omega_{m,\delta}$ together with~(\ref{step4:verifica}) and~(\ref{step4:sup}) we get
   \[
        \norm*{ 2\ddot{\psi}_{\eps} - W'(\psi_{\eps})}_{L^2(-m,m)} \le C(m) \cdot \eps,
   \]
   where $C(m) > 0$ is a constant that depends on $m > 0$. In particular, by~(\ref{eq:weak-conv-g}), this means that
   \[
        2\ddot{\psi}_{\eps} \to W'(\psi_0) = 2 \ddot{\psi}_0 \ \text{strongly in} \ L^2(-m,m).
   \]
   
   At this point the proof of the first part of Proposition~\ref{prop:blowup} is almost complete; to conclude we pick a sequence $\eps_k \to 0^+$ and, by a standard diagonal argument, we can find a (not relabelled) subsequence and an increasing sequence $m_k\to +\infty$ such that $\eps_k m_k\to 0$ and (\ref{eq:H2conv}) holds.
   
   Since $\tau_{\gamma}^+$ and $\tau_{\gamma}^-$ are isometries of $\mathbb{R}$ it is equivalent to prove~(\ref{eq:state-limit}) with $\psi_0$ instead of $q_0$.
   First of all we notice that the convergence of the $L^2(-m_k,m_k)$-norm of $\dot{\psi}_{\eps_k}$ to the $L^2(\mathbb{R})$-norm of $\dot{\psi}_0$ as $k \to +\infty$ is an immediate consequence of~(\ref{eq:H2conv}). Therefore, it is enough to show that
   \[
        \lim_{k \to +\infty} \int_{-m_k}^{m_k} \abs*{W(\psi_{\eps_k}) - W(\psi_0)} ds = 0.
   \]
   Now, for any $s \in (-m_k,m_k)$ there exists $\zeta_k(s)$ between $\psi_{\eps_k}(s)$ and $\psi_0(s)$ such that
   \begin{align}
    \notag
        \abs*{W(\psi_{\eps_k}(s)) - W(\psi_0(s))}
        & \le
        \abs*{W'(\psi_0(s))} \cdot \abs*{\psi_{\eps_k}(s) - \psi_0(s)} \\[1ex]
        \label{eq:proof-state-limit}
        & +
        \frac{\abs*{W''\big(\zeta_k(s)\big)}}{2} \cdot \big(\psi_{\eps_k}(s) - \psi_0(s)\big)^2.
   \end{align}
    Moreover, from~(\ref{eq:H2conv}) and Remark~\ref{remark:H2-C1} we know that if $k$ is large enough then
    \[
    \abs*{\psi_{\eps_k}(s)} \le \abs*{\psi_0(s)} + 1 \le \max \{ \abs{a}, \abs{b} \} + 1 \quad \forall s \in (-m_k,m_k).
    \]
    In particular, there exists a constant $M > 0$ such that $\abs{W''(\zeta_k(s))} \le M$ for every $s \in (-m_k,m_k)$ and $k$ large enough. Combining~(\ref{eq:proof-state-limit}) with Hölder inequality we get
    \begin{align}
        \notag
        \int_{-m_k}^{m_k} \abs*{W(\psi_{\eps_k}) - W(\psi_0)} ds
        & \le 
        \bigg( \int_{-m_k}^{m_k} \big( W'(\psi_0) \big)^2 ds \bigg)^{1/2} \bigg( \int_{-m_k}^{m_k} (\psi_{\eps_k} - \psi_0 )^2 ds \bigg)^{1/2} \\[1ex]
        \label{eq:taylor-int}
        & + 
        \frac{M}{2} \int_{-m_k}^{m_k} (\psi_{\eps_k} - \psi_0)^2 ds.
    \end{align}
    If we prove that $W'(\psi_0) \in L^2(\mathbb{R})$ then the conclusion follows from~(\ref{eq:H2conv}) and~(\ref{eq:taylor-int}). By a change of variable in the integral we observe that
    \[
    \int_{-\infty}^{+\infty} \big( W'(\psi_0(s)) \big)^2 ds
    =
    \int_{-\infty}^{+\infty} \frac{\big( W'(\psi_0(s)) \big)^2}{\sqrt{W(\psi_0(s))}} \abs{\dot{\psi}_0(s)} ds
    =
    \int_a^b \frac{( W'(\psi) )^2}{\sqrt{W(\psi)}} d\psi < +\infty,
    \]
    where the last integral is finite because $W > 0$ in $(a,b)$, $W \in C^2(\mathbb{R})$ and moreover
    \[
        \lim_{u \to a^+} \frac{W'(u)}{\sqrt{W(u)}} = \sqrt{2W''(a)}, \quad \lim_{u \to b^-} \frac{W'(u)}{\sqrt{W(u)}} = - \sqrt{2W''(b)}.
    \]
    This concludes the proof of Proposition~\ref{prop:blowup}.
\end{proof}

The following lemma roughly says that the second order quantity $G_{\eps}(u_{\eps}, \Omega_{\eps})$ controls the first order quantity $E_{\eps}(u_{\eps}, \Omega_{\eps})$ in regions $\Omega_{\eps}$ that are not too large in measure and in which $u_{\eps}$ stays close to the zeros $\{a,b\}$ of the potential $W$. We prove it without the radiality assumption, since it does not simplify the argument in this case.

\begin{lemma}
    \label{lemma:out-of-zeros} 
    Let $W \colon \mathbb{R} \to [0,+\infty)$ be a potential satisfying assumptions (W1), (W2) and (W3+).
    Let $\{ \Omega_{\eps} \}$ be a family of bounded open subsets of $\mathbb{R}^n$ with smooth boundary and let $\{ u_{\eps} \} \subset W_{\mathsf{loc}}^{2,1}(\mathbb{R}^n)\cap W^{1,2}_{\mathsf{loc}}(\R^n)$ be a family of functions such that $u_\eps \in L^\infty(\Omega_\eps)$ and
    \[
        \limsup_{\eps\to 0^+} \Big( \Leb^n(\Omega_{\eps}) + G_{\eps}(u_\eps, \Omega_{\eps}) \Big) < +\infty.
    \]
    Let us suppose that the following assumptions hold (at least for a sequence $\eps_k \to 0^+$):
    \begin{itemize}
        \item[(1)]  we have
                    \[
                    \lim_{\eps \to 0^+} \int_{\partial \Omega_{\eps}} \eps \sqrt{W(u_{\eps})} \cdot \abs*{ \frac{\partial u_{\eps}}{\partial \nu}} \ d \mathcal{H}^{n-1} = 0,
                    \]
                    where $\nu$ denotes the outward unit normal to $\partial \Omega_\eps$,
        \item[(2)] for any $\delta > 0$ there exists $\eps_{\delta} > 0$ such that for any $ \eps \in (0,\eps_{\delta})$ it holds either
                    \begin{gather*}
                        u_{\eps}  \le a + \delta \ \text{almost everywhere in $\Omega_{\eps}$} \\
                        \text{or} \\
                        u_{\eps}  \ge b - \delta \ \text{almost everywhere in $\Omega_{\eps}$}.
                    \end{gather*}
    \end{itemize}
    Then (at least on the sequence $\{\eps_k\}$)
    \[
        \lim_{\eps \to 0^+} E_{\eps}(u_{\eps}, \Omega_{\eps}) = 0.
    \]
\end{lemma}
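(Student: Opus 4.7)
The plan is to test the ``first-variation'' quantity
\[
    v_\eps := 2\eps \Delta u_\eps - \frac{W'(u_\eps)}{\eps}
\]
against the shifted function $w_\eps := u_\eps - a$ (treating the case $u_\eps \le a+\delta$; the case $u_\eps \ge b - \delta$ being symmetric with $b$ in place of $a$, and passing to a subsequence if needed so that one of the two alternatives of assumption~(2) persists). First, I would choose $\delta>0$ so small that, combining (W3+) for $u \le a$ with a second-order Taylor expansion of $W$ around $a$ (recall $W(a)=W'(a)=0$ and $W''(a) \ge 2\kappa^2 > 0$), the two pointwise estimates
\begin{equation*}
    W'(u)(u-a) \ge W(u) \qquad \text{and} \qquad W(u) \ge c\,(u-a)^2
\end{equation*}
hold for every $u \in (-\infty, a+\delta]$, with $c > 0$ depending only on $W$. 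By assumption~(2) these then hold pointwise with $u$ replaced by $u_\eps(x)$, for a.e.\ $x \in \Omega_\eps$, once $\eps$ is small enough.

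Next I would integrate by parts over $\Omega_\eps$. Since $\Omega_\eps$ is bounded with smooth boundary and $u_\eps \in W^{2,1}(\Omega_\eps) \cap W^{1,2}(\Omega_\eps) \cap L^\infty(\Omega_\eps)$, standard trace theory gives
\begin{equation*}
    \int_{\Omega_\eps} v_\eps\, w_\eps\, dx
    = -2\eps \int_{\Omega_\eps} |\nabla u_\eps|^2\, dx
    - \int_{\Omega_\eps} \frac{W'(u_\eps)(u_\eps - a)}{\eps}\, dx
    + \int_{\partial \Omega_\eps} 2\eps\, w_\eps\, \frac{\partial u_\eps}{\partial \nu}\, d\mathcal{H}^{n-1}.
\end{equation*}
Inserting the first pointwise estimate and rearranging, I obtain
\begin{equation*}
    E_\eps(u_\eps,\Omega_\eps)
    \le
    \int_{\Omega_\eps} \left( 2\eps |\nabla u_\eps|^2 + \frac{W(u_\eps)}{\eps} \right) dx
    \le
    \left| \int_{\Omega_\eps} v_\eps\, w_\eps\, dx \right|
    +
    \left| \int_{\partial \Omega_\eps} 2\eps\, w_\eps\, \frac{\partial u_\eps}{\partial \nu}\, d\mathcal{H}^{n-1} \right|,
\end{equation*}
so it suffices to show that both terms on the right vanish as $\eps \to 0^+$.

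For the bulk term, writing $e_\eps := \eps|\nabla u_\eps|^2 + W(u_\eps)/\eps$ for the unnormalized Allen--Cahn density, the Cauchy--Schwarz inequality applied on $\Omega_\eps \cap \{W(u_\eps) > 0\}$ (the complement contributing zero, because there $u_\eps = a$ forces $w_\eps = 0$) yields
\begin{equation*}
    \left| \int_{\Omega_\eps} v_\eps\, w_\eps\, dx \right|
    \le
    \bigl( G_\eps(u_\eps,\Omega_\eps) \bigr)^{1/2}
    \left( \int_{\Omega_\eps \cap \{W(u_\eps)>0\}} \frac{w_\eps^2}{e_\eps}\, dx \right)^{1/2},
\end{equation*}
and the second quadratic estimate $W(u_\eps) \ge c\, w_\eps^2$ gives $w_\eps^2 / e_\eps \le \eps / c$ on that set, so the bulk term is $O\bigl( \sqrt{\eps\, \Leb^n(\Omega_\eps)} \bigr) \to 0$ thanks to the uniform bounds on $G_\eps(u_\eps,\Omega_\eps)$ and $\Leb^n(\Omega_\eps)$. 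The same quadratic estimate gives $|w_\eps| \le \sqrt{W(u_\eps)/c}$, so the boundary integral is controlled by $(2/\sqrt{c}) \int_{\partial \Omega_\eps} \eps \sqrt{W(u_\eps)}\, |\partial u_\eps/\partial \nu|\, d\mathcal{H}^{n-1}$, which tends to zero by assumption~(1). The real content is the two Taylor-type pointwise inequalities near the zero of $W$; once these are available, the weighted Cauchy--Schwarz pairing of $v_\eps$ with $w_\eps$ does all the work, and the only mildly delicate point is to justify the integration by parts in the $W^{2,1} \cap W^{1,2} \cap L^\infty$ setting, which is routine given the smoothness of $\partial \Omega_\eps$.
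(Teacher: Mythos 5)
Your proof is correct, and it follows the same overall strategy as the paper's: pair the first variation $v_\eps=2\eps\Delta u_\eps-W'(u_\eps)/\eps$ with a multiplier that vanishes at the relevant well of $W$, integrate by parts so that the Allen--Cahn density appears on the left, kill the bulk term by a weighted Cauchy--Schwarz against $G_\eps(u_\eps,\Omega_\eps)$ using that the square of the multiplier is $O(\eps\, e_\eps)$, and kill the boundary term via assumption (1). The only real difference is the multiplier. The paper uses the nonlinear one $g(u_\eps)$ with $g(u)=\pm2\sqrt{W(u)}$ (signs chosen so that $g\in C^1$), exploits the identity $2Wg'=W'g$ and the lower bound $|g'|\ge\kappa/2$ outside $(a+\delta_0,b-\delta_0)$ coming from (\ref{eq:W4-revisited}); you use the linear multiplier $u_\eps-a$ (resp.\ $u_\eps-b$) together with the two Taylor/convexity inequalities $W'(u)(u-a)\ge W(u)\ge c\,(u-a)^2$ on $(-\infty,a+\delta]$. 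Both inequalities do hold there: for $u\le a$ the first is the convexity inequality $W(a)\ge W(u)+W'(u)(a-u)$ (convexity on $(-\infty,a]$ follows from (W3+)) and the second is Remark~\ref{remark:W4}, while for $u\in[a,a+\delta]$ both follow from a second-order Taylor expansion at $a$ using $W''(a)>0$ and the continuity of $W''$, for $\delta$ small. Note that, exactly as in the paper, (W3+) and not merely (W3) is essential in your argument, since $u_\eps$ is only bounded above by $a+\delta$ and may take arbitrarily negative values. What the paper's choice buys is that the boundary term becomes \emph{literally} the quantity in assumption (1), with no further input; your choice needs the additional (true, but worth recording) observations that the a.e.\ bound $u_\eps\le a+\delta$ in $\Omega_\eps$ passes to the trace on $\partial\Omega_\eps$, so that $|u_\eps-a|\le\sqrt{W(u_\eps)/c}$ also holds $\mathcal{H}^{n-1}$-a.e.\ on the boundary. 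What your choice buys is a more elementary multiplier and no need for the sign discussion around $g'$. Two cosmetic points: rather than ``passing to a subsequence so that one alternative of (2) persists'', it is cleaner to fix the small $\delta$ once and for all and run the argument separately on the two subfamilies of parameters $\eps$ singled out by the dichotomy in (2), since the claim is that a limit equals zero; and in the weighted Cauchy--Schwarz you should restrict to $\{w_\eps\neq0\}$ (which is contained in $\{W(u_\eps)>0\}$ by the quadratic lower bound), the complement contributing nothing because $w_\eps=0$ there.
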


\begin{proof}
    Let $g \in C^1(\mathbb{R})$ and notice that
    \[
    \int_{\Omega_{\eps}} g(u_{\eps}) \Delta u_{\eps} \,dx
    =
    \int_{\partial \Omega_{\eps}} g(u_{\eps}) \frac{\partial u_{\eps}}{\partial \nu} \,d\mathcal{H}^{n-1}
    - 
    \int_{\Omega_{\eps}} g'(u_{\eps}) \abs*{\nabla u_{\eps}}^2 \,dx.
    \]
    Therefore,
    \begin{multline}
        \label{eq:lemma-ooz-1}
        \int_{\Omega_{\eps}} g'(u_{\eps}) \left( \eps \abs*{\nabla u_{\eps}}^2 + \frac{W(u_{\eps})}{\eps} \right)\,dx\\
        =
        \int_{\partial \Omega_{\eps}} \eps g(u_{\eps}) \frac{\partial u_{\eps}}{\partial \nu}\,d\mathcal{H}^{n-1} - \int_{\Omega_{\eps}} \left[g(u_{\eps}) \eps \Delta u_{\eps} - g'(u_{\eps}) \frac{W(u_{\eps})}{\eps}\right]dx.
    \end{multline}

    As we already know from Remark~\ref{remark:W4}, if $W$ satisfies both (W2) and (W3+) then its only zeros are $u=a$ and $u=b$. Therefore, the function
    \[
    g(u) :=
        \begin{cases}
        2\sqrt{W(u)} & \text{if $u \in (-\infty,a]$}, \\
        -2\sqrt{W(u)} & \text{if $u \in [a,b]$}, \\
        2\sqrt{W(u)} & \text{if $u \in [b,+\infty)$},
        \end{cases}
    \]
    is of class $C^1(\mathbb{R})$ as a consequence of assumption (W1) (to see this it is enough to compute $g'(u)$ and its limits as $u\to a$ and $u\to b$). Moreover, it is easy to check that $g$ satisfies the relation $g' \cdot 2W = W' \cdot g$. Thus, we can rewrite~(\ref{eq:lemma-ooz-1}) as follows:
    \begin{multline}
        \label{eq:int-by-parts}
        \int_{\Omega_{\eps}} g'(u_{\eps}) \left( \eps \abs*{\nabla u_{\eps}}^2 + \frac{W(u_{\eps})}{\eps} \right)dx \\
        =
        \int_{\partial \Omega_{\eps}} \eps g(u_{\eps}) \frac{\partial u_{\eps}}{\partial \nu} \,d\mathcal{H}^{n-1} - 
        \int_{\Omega_{\eps}} g(u_{\eps}) \left( \eps \Delta u_{\eps} - \frac{W'(u_{\eps})}{2\eps} \right)dx.
    \end{multline}
    We observe that $\abs*{g'(u)} = \abs*{W'(u)}/\sqrt{W(u)}$ for every $u \notin \{a,b\}$, therefore from~(\ref{eq:W4-revisited}) and the continuity of $g'$ we deduce that there exists $\delta_0 > 0$ such that
    \begin{equation}
        \label{eq:constant-sign}
        \abs{g'(u)} \ge \kappa/2 \quad \text{for every $u \notin (a+\delta_0, b-\delta_0)$}. 
    \end{equation}

    We pick $\eps_{\delta_0} > 0$ such that assumption~(2) is satisfied; thus for any $\eps \in (0,\eps_{\delta_0})$ we have
    \begin{align}
        \notag
        \int_{\Omega_{\eps}} \left(\eps \abs*{\nabla u_{\eps}}^2 + \frac{W(u_{\eps})}{\eps}\right) dx
        & \le
        \frac{2}{\kappa} \int_{\Omega_{\eps}} \abs{g'(u_{\eps})} \left( \eps \abs*{\nabla u_{\eps}}^2 + \frac{W(u_{\eps})}{\eps} \right) dx\\[1ex]
        \notag
        & = \frac{2}{\kappa} \abs*{ \int_{\Omega_{\eps}} g'(u_{\eps}) \left( \eps \abs*{\nabla u_{\eps}}^2 + \frac{W(u_{\eps})}{\eps} \right) dx } \\[1ex]
        \notag
        & \le 
        \frac{2}{\kappa} \left\{ \int_{\partial \Omega_{\eps}} \eps \abs*{g(u_{\eps})} \abs*{\frac{\partial u_{\eps}}{\partial \nu}}\,d\mathcal{H}^{n-1} \right.\\[0.5ex]
        \notag
        &\quad\quad
        \left.+\int_{\Omega_{\eps}} \frac{\abs*{g(u_{\eps})}}{2} \cdot \abs*{ 2 \eps \Delta u_{\eps} - \frac{W'(u_{\eps})}{\eps} }\,dx \right\} \\[1ex]
        \label{eq:ooz1}
        & \le
        \frac{2}{\kappa} \left\{ \int_{\partial \Omega_{\eps}} \eps \abs*{g(u_{\eps})} \abs*{\frac{\partial u_{\eps}}{\partial \nu}}\,d\mathcal{H}^{n-1} + \sqrt{\eps \Leb^{n}(\Omega_{\eps}) \cdot G_{\eps}(u_{\eps}, \Omega_{\eps})} \right\},
    \end{align}
    where to pass from the first to the second line we used~(\ref{eq:constant-sign}) and the continuity of $g'$, to pass from the second to the third line we used~(\ref{eq:int-by-parts}), and to pass from the third to the fourth line we used Jensen inequality together with the inequality
    \[
        \frac{g(u_{\eps})^2}{4 \eps} \le \eps \abs*{\nabla u_{\eps}}^2 + \frac{W(u_{\eps})}{\eps}.
    \]
    
    Now, the right-hand side of~(\ref{eq:ooz1}) goes to zero as $\eps \to 0^+$, since the first addendum goes to zero thanks to assumption~(1), while the second one goes to zero because $\Leb^{n}(\Omega_{\eps})$ and $G_{\eps}(u_{\eps}, \Omega_{\eps})$ are bounded as $\eps \to 0^+$.
\end{proof}

\begin{remark}
    \label{remark:bdry-radial}
    \rm Under the radiality assumption, if $\Omega_{\eps} = B_{d_{\eps}}$ for some $d_{\eps} > 0$ then the integral in assumption (1) of Lemma~\ref{lemma:out-of-zeros} reduces to
    \[
    \int_{\partial B_{d_{\eps}}} \eps \sqrt{W(u_{\eps})} \cdot \abs*{\dot{u}_{\eps}} \ d\mathcal{H}^{n-1} 
    =
    \omega_{n-1} d_{\eps}^{n-1} \eps \sqrt{W(u_{\eps}(d_{\eps}))} \cdot \abs*{\dot{u}_{\eps}(d_{\eps})}.
    \]
\end{remark}

Finally, we can prove Theorem~\ref{theorem:main2}. In the proof we use Proposition~\ref{prop:blowup} with $\gamma_0:=(a+b)/2$ several times. In this case $\tau_{\gamma_0}^+(s)=s$ and $\tau_{\gamma_0}^-(s)=-s$. In order to simplify the notation we set $\widetilde{q}_0(s) := q_0(-s)$ and from now on we use $\widetilde{q}_0$ in place of $q_0 \circ \tau_{\gamma_0}^-$.

\begin{proof}[Proof of Theorem~\ref{theorem:main2}]
    We divide the proof in seven steps.
    
    \textsc{Step 1:} Let us fix $r_0 > 0$ and let us define $Z_{\eps}(r_0):=\{ r \in [r_0,+\infty) : u_{\eps}(r) = \gamma_0 \}$. We claim that there exists a constant $R > 0$ that does not depend on $r_0$ such that, if $\eps$ is small enough, then $Z_{\eps}(r_0)$ is a discrete subset of $[r_0, R]$. 
    
    The uniform boundedness of $Z_{\eps}(r_0)$ is a direct consequence of Proposition~\ref{prop:blowup}. Indeed let us suppose by contradiction that there exist a sequence $\eps_k \to 0^+$ and a family of points $z_k \to +\infty$ such that $u_{\eps_{k}}(z_k) = \gamma_0$ for every $k \in \mathbb{N}$. Then, by Proposition~\ref{prop:blowup}, there exists a sequence $\{ m_k \}$ of positive real numbers such that $m_k \to +\infty$, $m_k \eps_k \to 0$ as $k \to +\infty$ and such that the blow-ups $\psi_{\eps_k}$ of $u_{\eps_k}$ at $z_{\eps_k}$ satisfy
    \begin{equation*}
            \norm*{\psi_{\eps_k} - q_0}_{W^{2,2}(-m_k,m_k)} \to 0 \quad \text{or} \quad \norm*{\psi_{\eps_k} - \widetilde{q}_0}_{W^{2,2}(-m_k,m_k)} \to 0.
    \end{equation*}
    
    Therefore, for any fixed $M > 0$ we have $z_{\eps_k} \ge M$ for $k$ large enough and also
    \begin{align*}
        M^{n-1} \sigma_W^{a,b}
        & =
        M^{n-1} \int_{-\infty}^{+\infty} \left( \dot{q}_0^2 + W(q_0) \right) ds \\[1ex]
        & =
        \lim_{k \to +\infty} \int_{-m_k}^{m_k}
        \left( \dot{\psi}_{\eps_k}^2 + W(\psi_{\eps_k}) \right)
        (M + \eps_k s)^{n-1} ds \\[1ex]
        & \le 
        \liminf_{k \to +\infty} \int_{-m_k}^{m_k}
        \left( \dot{\psi}_{\eps_k}^2 + W(\psi_{\eps_k}) \right)
        (z_{\eps_k} + \eps_k s)^{n-1} ds \\[1ex]
        & =
        \liminf_{k \to +\infty} \int_{z_k - m_k \eps_k}^{z_k + m_k \eps_k} \left( \eps_k \dot{u}_{\eps_k}^2 + \frac{W(u_{\eps_k})}{\eps_k} \right) r^{n-1} dr \\[1ex]
        & \le 
        \frac{E_{\eps_k}(u_{\eps_k}, \mathbb{R}^n)}{\omega_{n-1}},
    \end{align*}
    where in the second line we used~(\ref{eq:state-limit}). Thus we reach a contradiction if $M$ is sufficiently large.
    
    As for the discreteness, let us assume by contradiction that there exist a sequence $\eps_k \to 0^+$ and a family $\{ z_k \}$ of points such that for every $k$ we have $z_k \in Z_{\eps_k}(r_0)$ and $z_k$ is not isolated in $Z_{\eps_k}(r_0)$. Then, by Proposition~\ref{prop:blowup}, there exists a (not relabelled) subsequence such that the blow-ups $\psi_{\eps_k}$ of $u_{\eps_k}$ at $z_k$ converge strongly to $\psi_0$ in $W^{2,2}(-1,1)$ where either $\psi_0 = q_0$ or $\psi_0 = \widetilde{q}_0$. Since $W^{2,2}$ convergence implies $C^1$ convergence and $q_0$ has nonvanishing derivative at $s=0$, this implies that $s=0$ is isolated in $\{ s \in (-1,1) : \psi_{\eps_k}(s) = \gamma_0 \}$ if $k$ is large enough. Then, we deduce that also $z_k$ is isolated in $Z_{\eps_k}(r_0)$, a contradiction.
    
    Therefore, if $\eps$ is small enough, we can order the points in $Z_{\eps}(r_0)$ in decreasing order, namely we can set $Z_{\eps}(r_0) = \{ z_{\eps}^1, z_{\eps}^2, \dots \}$, with $z_{\eps}^1 > z_{\eps}^2 > \cdots$.
    
    \smallskip
    
    \textsc{Step 2:} For every fixed $r_0 > 0$ the cardinality of $Z_{\eps}(r_0)$ is uniformly bounded if $\eps$ is sufficiently small.
    
    In fact, if this is not the case, by a diagonal argument we can find a sequence $\eps_k \to 0^+$ and a sequence $\{ z_i \} \subset [r_0, R]$ of points such that for every $i \in \mathbb{N}$ we have $z_{\eps_k}^i \to z_i$ as $k \to +\infty$.
    
    Then, by Proposition~\ref{prop:blowup}, for every $i \in \mathbb{N}$ there exists a sequence $\{ m_k^i \}$ of positive real numbers such that $m_k^i \to +\infty$, $m_k^i \eps_k \to 0$ as $k \to +\infty$ and such that the blow-ups $\psi_{\eps_k}^i$ of $u_{\eps_k}$ at $z_{\eps_k}^i$ satisfy
    \begin{equation}
        \label{eq:conv-bu}
            \norm*{\psi_{\eps_k}^i - q_0}_{W^{2,2}(-m_k^i,m_k^i)} \to 0 \quad \text{or} \quad \norm*{\psi_{\eps_k}^i - \widetilde{q}_0}_{W^{2,2}(-m_k^i,m_k^i)} \to 0.
    \end{equation}
    
    We claim that for every $N \in \mathbb{N}$ there exists a positive integer $k_N$ such that for every $k \ge k_N$ it holds
    \begin{equation}
        \label{eq:more-than-hausdorff}
        z_{\eps_k}^j \notin (z_{\eps_k}^i - m_k^i \eps_k, z_{\eps_k}^i + m_k^i \eps_k) \ \text{for every $i,j \le N$ with $i \neq j$}.
    \end{equation}
    Suppose by contradiction that~(\ref{eq:more-than-hausdorff}) does not hold; then there exist $N \in \mathbb{N}$ and $i, j \le N$ with $i \neq j$ such that the set
    \[
        Q_{ij} := \{ k \in \mathbb{N} : z_{\eps_k}^j \in (z_{\eps_k}^i - m_k^i \eps_k, z_{\eps_k}^i + m_k^i \eps_k) \}
    \]
    is infinite, in particular there exists a sequence $\{ k_h \} \subset Q_{ij}$ such that $k_h \to +\infty$ as $h \to +\infty$.
    Now, we can write $z_{\eps_{k_h}}^j = z_{\eps_{k_h}}^i + s_h \eps_{k_h}$ for some $s_h \in (-m_{k_h}^i,m_{k_h}^i) \setminus \{ 0 \}$.
    
    Without loss of generality, in~(\ref{eq:conv-bu}) we can assume that $\psi_{\eps_k}^i$ converges to $q_0$, hence from Remark~\ref{remark:H2-C1} we deduce that
    \begin{equation*}
        \limsup_{h \to +\infty} \, \abs*{\gamma_0 - q_0( s_h )}
        =
        \limsup_{h \to +\infty} \, \abs*{\psi_{\eps_{k_h}}^i(s_h) -  q_0( s_h )}
        \le 
        \limsup_{k \to +\infty} \, \norm*{\psi_{\eps_k}^i - q_0}_{L^{\infty}(-m_k^i,m_k^i)} = 0.
    \end{equation*}
    The previous estimate, together with (ii) in Lemma~\ref{lemma:q0}, implies that $s_h \to 0$ as $h \to +\infty$.
    
    Moreover, by the Mean Value Theorem we deduce that for every $h \in \mathbb{N}$ there exists $\zeta_h \in \mathbb{R}$ such that $\abs{\zeta_h} \le \abs{s_h}$ and with the property that $\dot{\psi}_{\eps_{k_h}}^i(\zeta_h) = 0$. This leads to a contradiction because from~(\ref{eq:conv-bu}) and Remark~\ref{remark:H2-C1} we know that $\{\psi_{\eps_k}\}$ converges to $q_0$ in $C^1$ but on the other hand $\zeta_h \to 0$ and $\dot{q}_0(0) \neq 0$.
    
    Therefore (\ref{eq:more-than-hausdorff}) holds. Dividing all the $m_k^i$ by 2, we can assume in addition that the sets $(z_{\eps_k}^i - m_k^i \eps_k, z_{\eps_k}^i + m_k^i \eps_k)$ corresponding to different values of $i \le N$ are disjoint if $k \ge k_N$. Moreover, we still have that $m_k^i \to +\infty$, $m_k^i \eps_k^i \to 0$ as $k \to +\infty$ and that~(\ref{eq:conv-bu}) holds; in particular we deduce
    \begin{align*}
        N r_0^{n-1} \sigma_W^{a,b}
        & \le
        \sum_{i=1}^N z_i^{n-1} \int_{-\infty}^{+\infty} \left( \dot{q}_0^2 + W(q_0) \right) ds \\[1ex]
        & =
        \lim_{k \to +\infty} \sum_{i=1}^N \int_{-m_k^i}^{m_k^i}
        \left(  \left( \dot{\psi}_{\eps_k}^i \right)^2 + W(\psi_{\eps_k}^i) \right)
        (z_{\eps_k}^i + \eps_k s)^{n-1} ds \\[1ex]
        & =
        \lim_{k \to +\infty} \sum_{i=1}^N \int_{z_{\eps_k}^i - m_k^i \eps_k}^{z_{\eps_k}^i + m_k^i \eps_k} \left( \eps_k \dot{u}_{\eps_k}^2 + \frac{W(u_{\eps_k})}{\eps_k} \right) r^{n-1} dr \\[1ex]
        & \le
        \limsup_{k \to +\infty} \int_{0}^{+\infty} \left( \eps_k \dot{u}_{\eps_k}^2 + \frac{W(u_{\eps_k})}{\eps_k} \right) r^{n-1} dr \\[1ex]
        & \le 
        \limsup_{k \to +\infty} \frac{E_{\eps_k}(u_{\eps_k}, \mathbb{R}^n)}{\omega_{n-1}},
    \end{align*}
    where in the second line we used~(\ref{eq:state-limit}); as before we reach a contradiction if $N$ is sufficiently large.
    
    \smallskip
    
    \textsc{Step 3:} Now, we build the family of radii $( r_i )_{i \in I}$ that verifies (\ref{eq:somma_ri_finita}) and such that (\ref{eq:int-var}) holds, possibly after extracting a subsequence.
    
    Combining \textsc{Step 2} with a diagonal argument it is possible to find a sequence $\eps_k \to 0^+$ such that for every $r > 0$ the cardinality of $Z_{\eps_k}(r)$ is eventually equal to some $N_r \in \mathbb{N}$. Namely for every $r > 0$ there exists a positive integer $k_r$ such that for every $k \ge k_r$ we have $Z_{\eps_k}(r) = \{ z_{\eps_k}^1, \dots, z_{\eps_k}^{N_r} \}$, with $z_{\eps_k}^1 > z_{\eps_k}^2 > \cdots > z_{\eps_k}^{N_r}$. 
    
    Let us set
    \[
        N_0:=\sup_{r>0} N_r
        \quad \text{and} \quad
        I = \begin{cases}
        \N &\text{if $N_0=+\infty$},\\
        \{1,\dots,N_0\} &\text{otherwise}.\end{cases}
    \]
    
    Then, possibly extracting another subsequence that we do not relabel, we can assume that for every $i \in I$ there exists $r_i > 0$ such that $z_{\eps_k}^i \to r_i$ as $k \to +\infty$. Clearly $r_i \ge r_{i+1}$ for every possible $i$.
    
    \smallskip
    
    \textsc{Step 4:} By Proposition~\ref{prop:blowup}, for every $i \in I$ we can find $\{ m_k^i \}$ such that $m_k^i \to +\infty$, $m_k^i \eps_k \to 0$ as $k \to +\infty$ and such that the blow-ups $\psi_{\eps_k}^i$ of $u_{\eps_k}$ at $z_{\eps_k}^i$ satisfy~(\ref{eq:conv-bu}) as $k \to +\infty$. 
    
    As in the proof of \textsc{Step 2}, we can choose $\{ m_k^i \}$ such that for every $r > 0$ there exists a positive integer $k_r$ such that for every $k \ge k_r$ the sets $(z_{\eps_k}^i - m_k^i \eps_k, z_{\eps_k}^i + m_k^i \eps_k)$ corresponding to different values of $i \le N_r$ are disjoint. Let us introduce
    \[
        c_k^i := z_{\eps_k}^i - m_k^i \eps_k
        \quad \text{and} \quad
        d_k^i := z_{\eps_k}^i + m_k^i \eps_k.  
    \]
    
    For every $0 < c < d$ we set $A_{c,d} := B_d \setminus \overline{B}_c$ . Now, we prove that for every integer $i \in I$
    \begin{equation*}
        \mu_{\eps_k} \mres A_{c_k^i, d_k^i} \overset{*}{\rightharpoonup} \mathcal{H}^{n-1} \mres \partial B_{r_i}.
    \end{equation*}
    Since $\mu_{\eps}$ and $u_{\eps}$ are radially symmetric, this is equivalent to show that
    \begin{equation}
        \label{eq:conv-near-zeros-red}
        \left( \eps_k \dot{u}_{\eps_k}(r)^2 + \frac{W(u_{\eps_k}(r))}{\eps_k} \right) r^{n-1} \Leb^1 \mres (c_k^i, d_k^i) \overset{*}{\rightharpoonup} \sigma_W^{a,b} r_i^{n-1} \delta_{r_i}.
    \end{equation}
    Let $\phi \in C_0(0,+\infty)$; using~(\ref{eq:state-limit}) we obtain
    \begin{align}
        & \lim_{k \to +\infty} \int_{c_k^i}^{d_k^i} 
        \left( \eps_k \dot{u}_{\eps_k}^2  + \frac{W(u_{\eps_k})}{\eps_k} \right) \phi(r) r^{n-1} dr \nonumber\\
        =\ & \lim_{k \to +\infty} \int_{-m_k^i}^{m_k^i}
     \left( \left( \dot{\psi}_{\eps_k}^i \right)^2 + W(\psi_{\eps_k}^i) \right) \phi(z_{\eps_k}^i + \eps_k s)
        (z_{\eps_k}^i + \eps_k s)^{n-1} ds \nonumber\\[1ex]
        =\ & \phi(r_i) r_i^{n-1} \int_{-\infty}^{+\infty} \left( \dot{q}_0^2 + W(q_0) \right) ds 
        = \phi(r_i) r_i^{n-1} \sigma_W^{a,b},\label{eq:lim_mu_[c,d]}
    \end{align}
    which proves~(\ref{eq:conv-near-zeros-red}). Moreover, summing over $i\in I$, we obtain
    \[
    \sum_{i \in I} \phi(r_i) r_i^{n-1} \sigma_W^{a,b} \leq \norm*{\phi}_{L^{\infty}} \cdot \limsup_{k \to +\infty} E_{\eps_k}(u_{\eps_k},\R^n). \\[-1ex]
    \]
    Since this holds for every $\varphi \in C_0 (0,+\infty)$ we deduce (\ref{eq:somma_ri_finita}).
    
    \smallskip
    
    \textsc{Step 5:} We prove that for every $i < N_0$ it turns out that
    \begin{equation*}
        \mu_{\eps_k} \mres A_{d_k^{i+1}, c_k^i} \overset{*}{\rightharpoonup} 0.
    \end{equation*}
    Clearly, the conclusion follows if we prove that
    \begin{equation}
        \label{eq:step5-conv-norm}
        \lim_{k \to +\infty} \mu_{\eps_k} \left( A_{d_k^{i+1}, c_k^i} \right)=0. 
    \end{equation}
    
    Since
    $$\mu_{\eps_k} \left( A_{d_k^{i+1}, c_k^i} \right)= E_{\eps_k}        \left( u_{\eps_k}, A_{d_k^{i+1}, c_k^i}\right),$$
    equality (\ref{eq:step5-conv-norm}) is a direct consequence of Lemma~\ref{lemma:out-of-zeros} applied with $\Omega_{\eps_k} = A_{d_k^{i+1}, c_k^i}$ provided that we check the two assumptions. 
    
    Assumption (1) follows by Remark~\ref{remark:bdry-radial}, indeed~(\ref{eq:conv-bu}) implies
    \begin{align}
        \label{eq:limit-bdry-1}
        & \lim_{k \to +\infty} W(u_{\eps_k}(d_k^{i+1})) = \lim_{k \to +\infty} W(u_{\eps_k}(c_k^i)) = 0, \\[0.5ex]
        \label{eq:limit-bdry-2}
        & \lim_{k \to +\infty} \eps_k \dot{u}_{\eps_k}(d_k^{i+1}) = \lim_{k \to +\infty} \eps_k \dot{u}_{\eps_k}(c_k^i) = 0.
    \end{align}
    
    Let us suppose by contradiction that assumption (2) of Lemma~\ref{lemma:out-of-zeros} does not hold. Therefore, there exist $\delta_0 > 0$, a subsequence of $\{ u_{\eps_k} \}$ that we do not relabel, and a sequence of points $r_k \in A_{d_k^{i+1}, c_k^i}$ such that $a+\delta_0 < u_{\eps_k}(r_k) < b-\delta_0$.
    Moreover, in the interval $[d_k^{i+1},c_k^i]$ we know that either $u_{\eps_k} < \gamma_0$ or $u_{\eps_k} > \gamma_0$. Then, possibly extracting another subsequence, we can assume without loss of generality that $\gamma_0 < u_{\eps_k}(r_k) < b-\delta_0$. In this case we know that $u_{\eps_k}(d_k^{i+1}) \ge b-\delta_0$ and $u_{\eps}(c_k^i) \ge b-\delta_0$ for $k$ large enough, since they both tend to $b$ as $k \to +\infty$.
    In particular, if $\hat{r}_k$ is a minimizer of $u_{\eps_k}$ in the interval $[d_k^{i+1},c_k^i]$ then $\hat{r}_k$ is internal for $k$ large enough, indeed we have $u_{\eps_k}(\hat{r}_k) \le u_{\eps_k}(r_k) < b-\delta_0$ and $u_{\eps_k} \ge b-\delta_0$ on the boundary. 
    
    Up to a further subsequence we can assume that there exists $\gamma \in [\gamma_0, b-\delta_0]$ such that $u_{\eps_k}(\hat{r}_k) \to \gamma$ as $k \to +\infty$. Then by Proposition~\ref{prop:blowup} there exists a subsequence such that the blow-ups $\psi_{\eps_k}$ of $u_{\eps_k}$ at $\hat{r}_k$ converge in $W^{2,2}(-1,1)$ to either $q_0 \circ \tau_{\gamma}^+$ or $q_0 \circ \tau_{\gamma}^-$, but this is in contrast with $\dot{u}_{\eps_k}(\hat{r}_k)=0$. Hence we can apply Lemma~\ref{lemma:out-of-zeros} to deduce that~(\ref{eq:step5-conv-norm}) holds.
    
    \smallskip
    
    \textsc{Step 6:} We recall from \textsc{Step 1} that there exists $R > 0$ such that $d_k^1 \le R$ for every $k$. Now, we prove that
    \begin{equation*}
            \mu_{\eps_k} \mres \left(\mathbb{R}^n \setminus \overline{B}_{d_k^1}\right) \overset{*}{\rightharpoonup} 0.
    \end{equation*}
    Notice that
    \[
        \mathbb{R}^n \setminus \overline{B}_{d_k^1} = \bigcup_{L > R} B_L \setminus \overline{B}_{d_k^1} = \bigcup_{L > R} A_{L,d_k^1}.
    \]
    Therefore, as in \textsc{Step 5}, the conclusion follows if for every $L > R$ we prove that
    \begin{equation}
        \label{eq:step6-conv-norm}
        \lim_{k \to +\infty} \mu_{\eps_k} \Big(A_{L, d_k^1} \Big) = 0. 
    \end{equation}
    
    We observe that for every $k$ there exists $L_k \in [L,2L]$ such that
    \begin{align}
        \left( \eps_k \dot{u}_{\eps_k}(L_k)^2 + \frac{W(u_{\eps_k}(L_k))}{\eps_k} \right) L_k^{n-1}
        \notag
        & \le 
        \frac{1}{L}\int_L^{2L} \left( \eps_k \dot{u}_{\eps_k}(r)^2 + \frac{W(u_{\eps_k}(r))}{\eps_k} \right) r^{n-1} dr \\[1ex]
        \label{eq:step6-reductionL}
        & \le 
        \frac{E_{\eps_k}(u_{\eps_k}, \mathbb{R}^n)}{\omega_{n-1}L}.
    \end{align}
    In particular, using the elementary inequality $2\alpha \beta \le \alpha^2 + \beta^2$ we deduce that
    \begin{equation}
        \label{eq:step6-assumption1-1}
        \lim_{k \to +\infty} \eps_k \abs*{\dot{u}_{\eps_k}(L_k)}  \sqrt{W(u_{\eps_k}(L_k))} L_k^{n-1}
        \le \lim_{k \to +\infty} \frac{\eps_k}{2} \left( \eps_k \dot{u}_{\eps_k}(L_k)^2 + \frac{W(u_{\eps_k}(L_k))}{\eps_k} \right) L_k^{n-1} = 0,
    \end{equation}
    and also
    \begin{equation}
        \label{eq:step6-assumption1-2}
        \lim_{k \to +\infty} W(u_{\eps_k}(L_k)) \le \lim_{k \to +\infty} \eps_k \left( \eps_k \dot{u}_{\eps_k}(L_k)^2 + \frac{W(u_{\eps_k}(L_k))}{\eps_k} \right) = 0.
    \end{equation}
    
    It is clear from~(\ref{eq:step6-assumption1-2}) that the only limit points of $u_{\eps_k}(L_k)$ as $k \to +\infty$ are $a$ and $b$. On the other hand $L_k > R$ and on $[R,+\infty)$ we have either $u_{\eps_k} < \gamma_0$ or $u_{\eps_k} > \gamma_0$, therefore necessarily
    \begin{equation}
        \label{eq:step6-same-limit}
        \lim_{k \to +\infty} u_{\eps_k}(L_k) = \lim_{k \to +\infty} u_{\eps_k}(d_k^1) \in \{ a,b \}.
    \end{equation}
    Now we want to apply Lemma~\ref{lemma:out-of-zeros} with $\Omega_{\eps_k} = A_{L_k, d_k^1}$. As in \textsc{Step 5}, we have to check two assumptions. The first assumption follows combining Remark~\ref{remark:bdry-radial} and~(\ref{eq:step6-assumption1-1}) and also the analogues of~(\ref{eq:limit-bdry-1}) and~(\ref{eq:limit-bdry-2}) with $d_k^1$ in place of $d_k^{i+1}$ and $c_k^i$. The second assumption can be proved exactly as in \textsc{Step 5} because we know from~(\ref{eq:step6-same-limit}) that on the boundary of $[d_k^1, L_k]$ we have either $u_{\eps_k} \le a - \delta_0$ or $u_{\eps_k} \ge b - \delta_0$ for $k$ large enough.
    
    Therefore, Lemma~\ref{lemma:out-of-zeros} tells us that
    \[
        \lim_{k \to +\infty} E_{\eps_k} \Big (u_{\eps_k}, A_{L_k, d_k^1} \Big) = 0,
    \]
    which clearly implies~(\ref{eq:step6-conv-norm}).
    
    \smallskip
    
    \textsc{Step 7:} If $N_0 = N_{\bar{r}}$ for some $\bar{r} > 0$ (in particular $N_0 < +\infty$), then for every $r < \bar{r}$ we have
    \begin{equation*}
            \mu_{\eps_k} \mres A_{r,c_k^{N_0}} \overset{*}{\rightharpoonup} 0.
    \end{equation*}
    This is again a consequence of Lemma~\ref{lemma:out-of-zeros} and the proof is very similar to the one of \textsc{Step 6}. The starting point is to prove the analogous of~(\ref{eq:step6-reductionL}), that is for every $k$ there exists $r_k \in [r/2,r]$ such that
    \[
        \left( \eps_k \dot{u}_{\eps_k}(r_k)^2 + \frac{W(u_{\eps_k}(r_k))}{\eps_k} \right) r_k^{n-1} 
        \le
        \frac{ 2 E_{\eps_k}(u_{\eps_k},\mathbb{R}^n)}{\omega_{n-1}r}.
    \]
    Then we conclude as in the previous step using $r_k$ instead of $L_k$.
    
    Finally, combining \textsc{Steps 4-7} we deduce the validity of~(\ref{eq:int-var}).
    
    To prove (\ref{eq:discr_to_0}) it is enough to observe that $|\xi_\eps|\leq \mu_\eps$, and hence the convergences to zero in \textsc{Steps 5-7} also hold with $\xi_\eps$.
    
    Furthermore, if we repeat the computations in \textsc{Step 4} with $\xi_{\eps_k}$ instead of $\mu_{\eps_k}$, then (\ref{eq:lim_mu_[c,d]}) becomes
    $$\lim_{k \to +\infty} \int_{c_k^i}^{d_k^i} 
        \left( \eps_k \dot{u}_{\eps_k}^2  - \frac{W(u_{\eps_k})}{\eps_k} \right) \phi(r) r^{n-1} dr 
        = \phi(r_i) r_i^{n-1} \int_{-\infty}^{+\infty} \left( \dot{q}_0^2 - W(q_0) \right) ds =0.$$
        
    This proves that $\xi_{\eps_k}\overset{*}{\weakto} 0$ and, since the limit does not depend on the sequence $\{\eps_k\}$, we deduce that the whole family $\{\xi_\eps\}$ converges to zero.
    \end{proof}

\subsubsection*{\centering Acknowledgements}

The first and the second author are members of the ``Gruppo Nazionale per l'Analisi Matematica, la Probabilità e le loro Applicazioni'' (GNAMPA) of the ``Istituto Nazionale di Alta Matematica'' (INdAM).

The second and the third author wish to thank Massimo Gobbino, who introduced them to the study of the modified conjecture and related problems during their master thesis projects.

\printbibliography[heading=bibintoc]

\end{document}